\documentclass[hidelinks,onefignum,onetabnum]{siamart220329}
\usepackage{amssymb,mathtools}

\newsiamthm{assumption}{Assumption}
\newsiamremark{remark}{Remark}

\newcommand{\R}{\mathbb R}
\newcommand{\Id}{\operatorname{id}}
\newcommand{\dd}{\,\mathrm d}
\newcommand{\norm}[2]{\lVert#1\rVert_{#2}}
\newcommand{\gr}{\nabla_\Gamma}
\newcommand{\lap}{\Delta_\Gamma}
\newcommand{\dv}{\nabla_\Gamma\cdot}
\newcommand{\tr}{\operatorname{tr}}
\newcommand{\dman}{d_{\mathcal{M}}}
\allowdisplaybreaks[2]
\headers{Convergence of isoparametric BGN for surfaces}{Y. Li}

\title{Convergence of isoparametric BGN methods
for mean curvature flow of closed surfaces with high-order finite elements}
\author{Yifei Li\thanks{Mathematisches Institut, Universit\"at T\"ubingen, Auf der Morgenstelle 10, 72076 T\"ubingen, Germany (\email{yifei.li@mnf.uni-tuebingen.de}).}}
\begin{document}
\maketitle

\begin{abstract}
We prove the convergence of the isoparametric Barrett--Garcke--N\"urnberg (BGN) method for mean curvature flow of closed surfaces. For finite elements of degree $k\ge6$ and time steps $\tau=O(h^2)$, the numerical surfaces converge to the exact surfaces in the manifold distance with order $h^2$. The proof relies on two ingredients. First, we construct approximation flows by an expansion in time, whose interpolations satisfy the BGN method with an improved defect. Second, we derive a refined estimate for the tangential error from an algebraic identity for tangential test functions.
\end{abstract}

\begin{keywords}
mean curvature flow, parametric finite element method, Barrett--Garcke--N\"urnberg method, convergence, error estimate
\end{keywords}

\begin{MSCcodes}
65M15, 65M60, 49M10, 35K65
\end{MSCcodes}

\section{Introduction}\label{sec:intro}

Parametric finite element methods were first proposed by Dziuk \cite{Dziuk90} for mean curvature flow, and have since been widely used for the numerical simulation of various geometric flows, including surface diffusion and Willmore flow. Barrett, Garcke and N\"urnberg (BGN) \cite{BGN07,BGN08} introduced a different variational formulation that allows the tangential motion, which keeps the mesh in good quality during the evolution without remeshing. The BGN methods have shown desirable performance in computations and have been applied to a wide range of problems, including two-phase flows and biomembranes \cite{BGN13,BGN15,BGN16}, crystal growth and solid-state dewetting \cite{BGN10,BJWZ17,ZJB20,BZ21,GNZ26}, and anisotropic geometric flows \cite{BGN08A,BJL23,BL24}; see the review paper \cite{BGN20}. Numerical experiments show that the BGN methods converge in the manifold distance introduced in \cite{ZJB21}. However, even for mean curvature flow, the convergence of the BGN method for closed surfaces has not been proved.

The techniques developed for evolving surface finite element methods \cite{DE07} provide a powerful tool for the error analysis of parametric methods. They were first developed for surfaces with prescribed velocity \cite{DE07,KLLP,Kovacs}, and then extended to geometric flows whose velocity is determined by PDEs on the surface \cite{KLL19,HuLi,LiSINUM,BaiLiErratum,BaiLi,HLT}, including mean curvature flow, surface diffusion and Willmore flow. In these works, the velocity is purely determined by PDEs, and the spatial semi-discretization is well-defined. In contrast, the spatial semi-discretization of the BGN method is a differential-algebraic equation instead of a PDE, so that the semi-discrete tangential velocity is infinite, whereas the full discretization is uniquely solvable \cite{BGN07,BGN08}. Hence these analyses cannot be directly applied to the BGN method. The existing convergence results for BGN-type methods of curves \cite{BaiLiMC,BGV} add stabilization terms to the BGN method so that the tangential velocity is uniquely determined, while the convergence of the original BGN method without stabilization has remained open since \cite{BGN07,BGN08}.

Recently, the convergence of the original BGN method for curves was proved in \cite{Li} by a space-time backward error analysis, in which approximation flows are introduced to obtain an improved defect. However, there are two essential difficulties in extending this approach to closed surfaces. First, the construction of the spatial part of the approximation flows depends on the structure of the mesh. For curves, all elements of a uniform mesh are identical. This is in general not the case for closed surfaces. Suppose that every vertex has the same number $q$ of adjacent vertices, and let $V$, $E$ and $F$ be the numbers of vertices, edges and triangles, respectively, so that $qV=2E=3F$. Then Euler's formula $V-E+F=2-2g$ gives $(6-q)V=12(1-g)$, where $g$ is the genus of the surface. Hence, unless $g=1$, such structured meshes cannot be refined arbitrarily, and for a closed surface which is not a topological torus, the spatial expansion of \cite{Li} cannot even be carried out. Second, the tangent space of a curve is one-dimensional, so that every tangential test function is the interpolation of a scalar function times a discrete tangent vector, and the tangential error can be computed explicitly \cite[Lemma~3.6]{Li}. For surfaces, the tangent space is two-dimensional and such an explicit computation is not available. The tangential error then contains a term involving the normal error, which cannot be estimated directly with the required order.

In this paper, we prove that the isoparametric BGN method \cite{GNPZ} for mean curvature flow of closed surfaces with finite elements of degree $k\ge6$ converges in the manifold distance with an error of $O(h^2)$ under the time step condition $\tau=O(h^2)$. To overcome the first difficulty, we observe that the dominant terms in the space-time expansion are the temporal ones, for finite elements of sufficiently high degree. Therefore, the method behaves essentially as a semi-discretization in time, and it suffices to expand in time. The resulting approximation flows satisfy the BGN method with a normal defect of $O(\tau^2+h^{k-1})$ and a tangential defect of $O(\tau^3+\tau^2h^2+h^k)$, which is amplified by $\tau^{-1}$ in the error analysis. For $k\ge6$ and $\tau=O(h^2)$, both the normal defect and $\tau^{-1}$ times the tangential defect are of order $h^4$.

For the second difficulty, we start from the geometric identity $\int_\Gamma\nabla_\Gamma\Id\cdot\nabla_\Gamma\phi=\int_\Gamma Hn\cdot\phi$, which holds for every test function $\phi$ on every smooth surface, where $n$ is the unit normal vector. Differentiating both sides along a perturbation of the surface gives two different expressions of the same derivative, and hence an algebraic identity. For tangential test functions $(I-n\otimes n)\phi$, this identity helps us control the tangential error. In its discrete version, superapproximation yields a factor $h^2$ for tangential test functions, which has to be balanced by the factor $\tau$ in front of the tangential part of the BGN method. This explains why $\tau=O(h^2)$ is the natural time step condition; see Remark~\ref{rem:time-step}. The discrete version contains a normal error term $D_h$, which we control precisely by inserting the decomposition of the stiffness term by Bai and Li \cite{BaiLi} into the error equation; see Lemma~\ref{lem:D-bound}.

The rest of this paper is organized as follows. In Section~\ref{sec:discretization}, we describe the isoparametric finite element discretization and the BGN method, and state the main result. In Section~\ref{sec:approximation-flows}, we construct the approximation flows by an expansion in time and prove the improved defect estimates. Section~\ref{sec:proof} is devoted to the error analysis and the proof of Theorem~\ref{thm:main}. The proof of Lemma~\ref{lem:tangential-identity} is given in Appendix~\ref{app:tangential-identity}.

\section{Finite element discretization and the main result}\label{sec:discretization}

\subsection{Initial approximation and finite element space}\label{subsec:initial-triangulation}

Let $\Gamma^0\subset\R^3$ be a connected, closed, oriented smooth surface, let $D_\delta(\Gamma^0) = \{x\in\R^3:\operatorname{dist}(x,\Gamma^0)\le\delta\}$ be a sufficiently small tubular neighborhood of $\Gamma^0$, and let $a^0:D_\delta(\Gamma^0)\to\Gamma^0$ denote the distance projection onto $\Gamma^0$.

Following the standard isoparametric construction \cite[Section~2]{Demlow}, let $\Gamma_h^0$ be a degree-$k$ isoparametric interpolation of $\Gamma^0$ associated with a shape-regular and quasi-uniform triangulation $\mathcal T_h^0$. We set
\begin{equation*}
	h=\max_{K^0\in\mathcal T_h^0}\operatorname{diam}K^0, \qquad ch\le \operatorname{diam}K^0\le h \qquad\text{for all }K^0\in\mathcal T_h^0,
\end{equation*}
where $c>0$ is independent of $h$. For sufficiently small $h$, the restriction $a^0|_{\Gamma_h^0}$ is a bijection.

For each $K^0\in\mathcal T_h^0$, let $F_{K^0}:\widehat K\to K^0$ denote the standard degree-$k$ isoparametric element map from the reference triangle $\widehat K$. The finite element space on $\Gamma_h^0$ is
\begin{equation*}
	S_h^0 = \left\{ u_h\in C(\Gamma_h^0): u_h\circ F_{K^0}\in\mathbb P_k(\widehat K) \quad\text{for all }K^0\in\mathcal T_h^0 \right\}.
\end{equation*}
We denote by $p_1^0,\ldots,p_J^0$ the Lagrange nodes of $\Gamma_h^0$. For $f\in C(\Gamma^0)$, the nodal interpolation operator $I_h$ is defined by
\begin{equation*}
	(I_hf)(p_j^0)=f(a^0(p_j^0)), \qquad j=1,\ldots,J.
\end{equation*}

The standard geometric approximation properties of $\Gamma_h^0$ and $\Gamma^0$ will be used throughout without further comment; see \cite[Propositions~2.3 and~2.7]{Demlow}.

\subsection{Exact and interpolated evolving surfaces}\label{subsec:evolving-surfaces}

Let $X:\Gamma^0\times[0,T]\to\R^3$ be a regular parametrization satisfying $X(\cdot,0)=\Id_{\Gamma^0}$ and $X(\cdot,t)\in W^{k+2,\infty}(\Gamma^0)^3$, $|\nabla_{\Gamma^0} X(t)| \geq c>0$ for all $t\in[0,T]$. The exact evolving surface is given by $\Gamma(t)=X(\Gamma^0,t)$. The interpolated parametrization and the corresponding interpolated surface are $X_{h,*}(t)=I_hX(t)$ and $\Gamma_{h,*}(t)=X_{h,*}(t)(\Gamma_h^0)$. The finite element space on $\Gamma_{h,*}(t)$ is denoted by
\begin{equation*}
	S_{h,*}(t) = S_h(\Gamma_{h,*}(t)) = \{u_h\circ (X_{h,*}(t))^{-1}:u_h\in S_h^0\}.
\end{equation*}
$I_h:=I_{h,\Gamma_{h,*}(t)}$ also denotes nodal interpolation on $\Gamma_{h,*}(t)$. When no confusion can arise, the time dependence is omitted.

There exists $\delta>0$ such that, for all $t\in[0,T]$, the distance projection $a(t):D_\delta(\Gamma(t))\to\Gamma(t)$ is well-defined, and for sufficiently small $h$ its restriction to $\Gamma_{h,*}(t)\subset D_\delta(\Gamma(t))$ is a bijection onto $\Gamma(t)$. Let $n(\cdot,t)$ denote the unit normal vector on $\Gamma(t)$. For $x\in D_\delta(\Gamma(t))$,
\begin{equation}\label{eq:signed-distance-representation}
	x-a(t)(x) = d(x,t)n(a(t)(x),t),
\end{equation}
where $d(\cdot,t)$ denotes the signed distance function to $\Gamma(t)$. Let $P=I-n\otimes n$ denote the orthogonal projection onto the tangent space of $\Gamma(t)$.

For a function $u_h$ on $\Gamma_{h,*}(t)$ and a function $f$ on $\Gamma(t)$, we define the lift and inverse lift by
\begin{equation}\label{eq:lift}
	u_h^\ell=u_h\circ a(t)^{-1}, \qquad f^{-\ell}=f\circ a(t).
\end{equation}
In particular, the inverse lifts of the unit normal and of the tangential projection are denoted by
\begin{equation*}
	n_*=n^{-\ell}, \qquad P_*=P^{-\ell}=I-n_*\otimes n_* .
\end{equation*}

We recall the standard lift and interpolation estimates for high-order isoparametric surface finite elements, and the inverse estimates for quasi-uniform triangulations; see \cite[Lemmas~5.1--5.3 and~5.6]{Kovacs}, \cite[Propositions~2.3 and~2.7]{Demlow} and \cite[(3.3a), (4.3c)]{HLT}.
\begin{lemma}\label{lem:approximation}
	For $u_h$ defined on $\Gamma_{h,*}(t)$, $f\in H^{k+1}(\Gamma(t))$, $s = 0, 1$, and $1\leq p \leq \infty$, there exists a constant $C$ independent of $h$ and $t$ such that
	\begin{align}
		C^{-1}\norm{u_h}{W^{s, p}(\Gamma_{h,*}(t))} \le \norm{u_h^\ell}{W^{s, p}(\Gamma(t))} &\le C\norm{u_h}{W^{s, p}(\Gamma_{h,*}(t))}, \label{eq:lift-norm} \\
		\norm{(I_hf^{-\ell})^\ell-f}{L^2(\Gamma(t))}+ h\norm{(I_hf^{-\ell})^\ell-f}{H^1(\Gamma(t))}&\leq Ch^{k+1}\norm{f}{H^{k+1}(\Gamma(t))}. \label{eq:interpolation}
	\end{align}
	Moreover, for $u_h,w_h$ defined on $\Gamma_{h,*}(t)$,
	\begin{align}
		&\left| \int_{\Gamma_{h,*}(t)}u_hw_h - \int_{\Gamma(t)}u_h^\ell w_h^\ell \right|\le Ch^{k+1} \norm{u_h}{L^2(\Gamma_{h,*}(t))} \norm{w_h}{L^2(\Gamma_{h,*}(t))}, \label{eq:lift-mass} \\
		&\left| \int_{\Gamma_{h,*}(t)}u_h \nabla_{\Gamma_{h,*}(t)}w_h - \int_{\Gamma(t)}u_h^\ell \nabla_{\Gamma(t)}w_h^\ell \right|\le Ch^{k} \norm{u_h}{L^\infty(\Gamma_{h,*}(t))} \norm{w_h}{H^1(\Gamma_{h,*}(t))}, \label{eq:lift-mix} \\
		&\left| \int_{\Gamma_{h,*}(t)} \nabla_{\Gamma_{h,*}(t)}u_h\cdot \nabla_{\Gamma_{h,*}(t)}w_h - \int_{\Gamma(t)} \nabla_{\Gamma(t)}u_h^\ell\cdot \nabla_{\Gamma(t)}w_h^\ell \right| \nonumber \\
		&\qquad\le Ch^{k+1} \norm{u_h}{H^1(\Gamma_{h,*}(t))} \norm{w_h}{H^1(\Gamma_{h,*}(t))}. \label{eq:lift-stiffness}
	\end{align}
	For every finite element function $u_h\in S_{h,*}(t)$, $s,r\in\{0,1\}$ with $s\le r$, and $1\le q\le p\le\infty$, the inverse estimate holds
	\begin{equation}\label{eq:inverse}
		\norm{u_h}{W^{r,p}(\Gamma_{h,*}(t))}\le Ch^{s-r-2(1/q-1/p)}\norm{u_h}{W^{s,q}(\Gamma_{h,*}(t))}.
	\end{equation}
\end{lemma}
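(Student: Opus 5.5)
The plan is to reduce every estimate in Lemma~\ref{lem:approximation} to the corresponding classical statements for a fixed, sufficiently accurate degree-$k$ isoparametric surface. The key point is that $\Gamma_{h,*}(t)$ is such a surface, uniformly in $t\in[0,T]$. Since $X(t)\in W^{k+2,\infty}(\Gamma^0)^3$ with $|\nabla_{\Gamma^0}X(t)|\ge c$, standard interpolation theory gives, uniformly in $t$,
\begin{equation*}
\norm{X_{h,*}(t)-X(t)\circ a^0}{L^\infty(\Gamma_h^0)}+h\norm{\nabla_{\Gamma_h^0}(X_{h,*}(t)-X(t)\circ a^0)}{L^\infty(\Gamma_h^0)}\le Ch^{k+1}.
\end{equation*}
It follows that each element of $\Gamma_{h,*}(t)$ is the image of $\widehat K$ under a degree-$k$ polynomial map $X_{h,*}(t)\circ F_{K^0}$. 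This map is shape-regular and quasi-uniform with constants independent of $t$, since the metric of $X(t)$ is uniformly nondegenerate. Moreover, $\Gamma_{h,*}(t)$ lies in an $O(h^{k+1})$ neighborhood of $\Gamma(t)$, with $|d|\le Ch^{k+1}$ and normals differing by $O(h^k)$.

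With this in hand, the geometric consequences follow from \cite[Propositions~2.3 and~2.7]{Demlow}, applied at each fixed $t$ with constants depending only on these uniform bounds. These consequences are that $a(t)|_{\Gamma_{h,*}(t)}$ is a bijection, that its Jacobian $\mu_h$ satisfies $|1-\mu_h|\le Ch^{k+1}$, and that the transformation matrix $A_h$ relating discrete and lifted gradients satisfies $|P-A_h|\le Ch^{k+1}$ after projection. In the notation of \cite{Kovacs}, the corresponding term is $\norm{P_*-P_h}{L^\infty}\le Ch^k$, with the symmetric part of order $h^{k+1}$.

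The individual estimates then follow by change of variables. The norm equivalence \eqref{eq:lift-norm} comes from writing $\int_{\Gamma(t)}|u_h^\ell|^p=\int_{\Gamma_{h,*}}|u_h|^p\mu_h$ and using $\nabla_\Gamma u_h^\ell=B\nabla_{\Gamma_{h,*}}u_h$ with $B$ uniformly bounded and invertible on tangent spaces. Estimate \eqref{eq:lift-mass} is immediate from $|1-\mu_h|\le Ch^{k+1}$. Estimate \eqref{eq:lift-stiffness} follows from the bound $O(h^{k+1})$ on $\mu_h^{-1}\text{-weighted }(P-A_h)$. Estimate \eqref{eq:lift-mix} involves the unsymmetrized gradient transformation, which is only $O(h^k)$ accurate, hence the factor $h^k$. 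Putting $u_h$ in $L^\infty$ and $w_h$ in $H^1$ then gives the stated form.

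For \eqref{eq:interpolation}, I would map elementwise to $\widehat K$, pulling back $f$ along $a(t)\circ X_{h,*}\circ F_{K^0}$. I would then apply the Bramble--Hilbert lemma, using that derivatives of this map up to order $k+1$ are bounded by $Ch^{j}$ as in \cite[Lemma~5.3]{Kovacs}, and transform back with \eqref{eq:lift-norm}.

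The inverse estimate \eqref{eq:inverse} is standard on $\widehat K$ via equivalence of norms on $\mathbb P_k$ and scaling. Quasi-uniformity supplies the uniform factor $h^{s-r-2(1/q-1/p)}$, and summing over elements (with $\ell^q\subset\ell^p$ for $q\le p$) completes it.

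The main obstacle is verifying uniformity in $t$. Concretely, one must show that $X_{h,*}(t)$ inherits the nondegeneracy and the higher-derivative element bounds required by \cite{Demlow,Kovacs}, uniformly in $t$. I would handle this by using compactness of $[0,T]$ and the uniform $W^{k+2,\infty}$ and lower Jacobian bounds, which enter all constants only through these quantities.
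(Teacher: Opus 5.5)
Your proposal is correct and follows the paper's route: the paper gives no proof of this lemma and simply recalls it from the standard isoparametric theory of Demlow and Kov\'acs, with the inverse estimates for quasi-uniform meshes taken from the literature. Your sketch is the standard content of those references: $|1-\mu_h|\le Ch^{k+1}$ and the $O(h^{k+1})$ symmetric gradient transformation give the mass and stiffness perturbations, the $O(h^k)$ tangent-plane tilt gives the mixed term, Bramble--Hilbert on $\widehat K$ gives interpolation, and scaling gives the inverse estimate, with constants uniform in $t$ through the uniform $W^{k+2,\infty}$ and nondegeneracy bounds on $X(t)$.
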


\begin{lemma}[Superapproximation]\label{lem:superapproximation}
	Let $u_h,f_h\in S_{h,*}(t)$. Then
	\begin{subequations}\label{eq:superapproximation}
		\begin{align}
			&\norm{(1-I_h)(f_hu_h)}{L^2(\Gamma_{h,*}(t))} +h\norm{(1-I_h)(f_hu_h)}{H^1(\Gamma_{h,*}(t))} \nonumber \\
			&\qquad\le Ch^2 \norm{f_h}{W^{1,\infty}(\Gamma_{h,*}(t))} \norm{u_h}{H^1(\Gamma_{h,*}(t))}, \label{eq:FE-superapproximation} \\
			&\norm{(1-I_h)(P_*u_h)}{L^2(\Gamma_{h,*}(t))} +h\norm{(1-I_h)(P_*u_h)}{H^1(\Gamma_{h,*}(t))} \nonumber \\
			&\qquad\le Ch^2\norm{u_h}{H^1(\Gamma_{h,*}(t))}. \label{eq:P-superapproximation}
		\end{align}
	\end{subequations}

	For $f\in W^{k+1,\infty}(\Gamma(t))$,
	\begin{equation}\label{eq:smooth-superapproximation}
		\begin{aligned}
			&\norm{(1-I_h)(f^{-\ell}u_h)}{L^2(\Gamma_{h,*}(t))} +h\norm{(1-I_h)(f^{-\ell}u_h)}{H^1(\Gamma_{h,*}(t))} \\
			&\qquad\le Ch^2\norm{f}{W^{k+1,\infty}(\Gamma(t))} \norm{u_h}{H^1(\Gamma_{h,*}(t))}.
		\end{aligned}
	\end{equation}
\end{lemma}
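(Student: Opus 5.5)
The plan is to argue elementwise and exploit the fact that all three estimates are superapproximation results of the classical Nitsche--Schatz/Bertoluzza type, transported to isoparametric elements via the maps $X_{h,*}(t)\circ F_{K^0}$. Fix an element $K$ of $\Gamma_{h,*}(t)$ with parametrization $G_K:\widehat K\to K$. Since $X(t)\in W^{k+2,\infty}$ and the triangulation is shape regular and quasi-uniform, the maps $G_K$ satisfy the usual bounds $\norm{\nabla^m G_K}{L^\infty}\le Ch^m$ for $m\le k+1$ and $|\det|\sim h^2$ uniformly in $t$ (standard in \cite{Demlow,Kovacs}). For a function $g$ on $K$ with $g\circ G_K$ smooth on $\widehat K$, the Bramble--Hilbert lemma on $\widehat K$ gives
\begin{equation*}
\norm{(1-I_h)g}{L^2(K)}+h\norm{(1-I_h)g}{H^1(K)}\le Ch^{k+1}\sum_{m\le k+1}h^{-m}\,\bigl|g\circ G_K\bigr|_{H^{m}(\widehat K)}\cdot h^{-1}\cdot h ,
\end{equation*}
i.e.\ after scaling, $\le C\,|g\circ G_K|_{H^{k+1}(\widehat K)}$ times the area scaling $h$. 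The main computation is to bound the $(k+1)$-st reference derivatives of the product $g=f u_h$, where $\widehat u=u_h\circ G_K\in\mathbb P_k$ so that $\partial^{k+1}\widehat u=0$.

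For \eqref{eq:FE-superapproximation}, with $\widehat f,\widehat u\in\mathbb P_k$, Leibniz gives $\partial^{k+1}(\widehat f\widehat u)=\sum_{1\le j\le k}\binom{k+1}{j}\partial^j\widehat f\,\partial^{k+1-j}\widehat u$. I will use inverse estimates on the polynomial space on $\widehat K$ combined with the scaling $|\partial^j\widehat f|\le Ch^{j}\norm{f_h}{W^{j,\infty}(K)}$ and a local inverse inequality $\norm{f_h}{W^{j,\infty}(K)}\le Ch^{1-j}\norm{f_h}{W^{1,\infty}(K)}$ (valid for isoparametric elements up to lower-order terms coming from the geometry, which are harmless). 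Similarly $|\partial^{i}\widehat u|_{L^2(\widehat K)}\le Ch^{i-1}\norm{u_h}{H^{i}(K)}\le Ch^{i-1}h^{1-i}h\cdot h^{-1}\norm{u_h}{H^1(K)}$ after the area scaling. Each term thus contributes $h^{j}h^{1-j}\cdot h^{k+1-j}h^{-(k-j)}\norm{f_h}{W^{1,\infty}}\norm{u_h}{H^1(K)}=h^2\,\norm{f_h}{W^{1,\infty}}\norm{u_h}{H^1(K)}$ in the mapped norms. Summing the squares over elements yields \eqref{eq:FE-superapproximation}; the $H^1$ part follows the same way with one fewer power of $h$.

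For \eqref{eq:smooth-superapproximation} the same argument applies with $\widehat f=f^{-\ell}\circ G_K$. Since $a(t)$ is smooth and $G_K$ has the derivative bounds above, the chain rule gives $|\partial^j\widehat f|\le Ch^j\norm{f}{W^{j,\infty}(\Gamma)}$ for $j\le k+1$. Now the term $j=k+1$ (with $\widehat u$ undifferentiated) also appears, contributing $h^{k+1}\norm{u_h}{L^2}$, which is even better. \eqref{eq:P-superapproximation} is then the special case $f=P$ componentwise, since $n$ is smooth, with $\norm{P}{W^{k+1,\infty}}$ bounded uniformly in $t$ by the regularity of $X$.

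The main obstacle I anticipate is the bookkeeping in the isoparametric setting. Reference derivatives of $\widehat u$ do not vanish beyond order $k$ only after composition with the curved map, so one must check carefully that the norms $\norm{u_h}{H^i(K)}$ defined via tangential gradients are equivalent, up to $h$-scaling, to the reference seminorms, including lower-order terms. I would handle this by invoking the norm-equivalence estimates of \cite[Prop.~2.3]{Demlow} and \cite{Kovacs} elementwise. Since the statement is itself standard, I would ultimately cite these references for the routine parts and present only the Leibniz/scaling step explicitly.
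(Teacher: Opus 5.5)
Your argument is correct, but it is not how the paper proceeds. The paper gives no proof: it imports \eqref{eq:FE-superapproximation}--\eqref{eq:P-superapproximation} from \cite[Lemmas~3.8--3.9]{HLT} and \eqref{eq:smooth-superapproximation} from \cite[Lemma~4.4]{BaiLi}. You instead reconstruct the standard elementwise argument: pull back by $G_K=X_{h,*}(t)\circ F_{K^0}$, apply Bramble--Hilbert for $1-\widehat I$ on $\widehat K$, and use Leibniz with $\partial^{k+1}\widehat u=0$ (and $\partial^{k+1}\widehat f=0$ in the finite element case). You also regroup the statements. Your \eqref{eq:P-superapproximation} is the special case $f=P_{ij}$ of \eqref{eq:smooth-superapproximation}. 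This is legitimate because $P_*=P^{-\ell}$, $I_h$ acts componentwise, and $P\in W^{k+1,\infty}(\Gamma(t))$ uniformly in $t$ since $X(t)\in W^{k+2,\infty}$. Your route makes the lemma self-contained and shows that the $P_*$ estimate needs nothing beyond a smooth multiplier. The citation is shorter, and the paper needs \cite[Lemma~3.9]{HLT} anyway for the rational multiplier $\overline N_{h,*}$ in Lemma~\ref{lem:superapproximation-II}, so citing it for $P_*$ as well costs nothing.

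Two points in your write-up should be tidied.

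First, your first display is garbled. The local estimate you actually use is $\norm{(1-I_h)g}{L^2(K)}+h\norm{(1-I_h)g}{H^1(K)}\le Ch\,|g\circ G_K|_{H^{k+1}(\widehat K)}$.

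Second, the ``main obstacle'' you anticipate disappears if you never leave $\widehat K$. Since $\widehat f,\widehat u\in\mathbb P_k(\widehat K)$, equivalence of seminorms on $\mathbb P_k(\widehat K)/\R$ gives, for $1\le i,j\le k$,
$|\widehat f|_{W^{j,\infty}(\widehat K)}\le C|\widehat f|_{W^{1,\infty}(\widehat K)}\le Ch\norm{\nabla_{\Gamma_{h,*}}f_h}{L^\infty(K)}$
and
$|\widehat u|_{H^i(\widehat K)}\le C|\widehat u|_{H^1(\widehat K)}\le C\norm{\nabla_{\Gamma_{h,*}}u_h}{L^2(K)}$.
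These use only $|DG_K|\le Ch$ and an area element comparable to $h^2$. No higher-order surface norms or inverse inequalities on curved elements are needed.

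For \eqref{eq:smooth-superapproximation} you need two additional inputs. One is $|\partial^mG_K|\le Ch^m$ for $m\le k$, which follows from $G_K=\widehat I\bigl(X(t)\circ a^0\circ F_{K^0}\bigr)$ and the corresponding bounds for $F_{K^0}$ in \cite{Demlow}. The other is $\norm{f\circ a(t)}{W^{k+1,\infty}(D_\delta(\Gamma(t)))}\le C\norm{f}{W^{k+1,\infty}(\Gamma(t))}$.
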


Here \eqref{eq:FE-superapproximation}--\eqref{eq:P-superapproximation} are from \cite[Lemmas~3.8--3.9]{HLT}, and \eqref{eq:smooth-superapproximation} follows from \cite[Lemma~4.4]{BaiLi}.

\subsection{Perturbed surfaces and normal vectors}\label{subsec:normals}

In this subsection the time variable is fixed and omitted. Let $X_h$ be another regular finite element parametrization and set
\begin{equation*}
	\Gamma_h=X_h(\Gamma_h^0), \qquad S_h=S_h(\Gamma_h) = \{u_h\circ X_h^{-1}:u_h\in S_h^0\}.
\end{equation*}
For any $u_h \in S_h$, define its lift to $\Gamma_{h,*}$ by $\widehat u_h=u_h\circ X_h\circ(X_{h,*})^{-1}\in S_{h,*}$. Let $e_x\in(S_{h,*})^3$ denote the finite element function whose nodal vector is $X_h-X_{h,*}$. If $\norm{\nabla_{\Gamma_{h,*}}e_x}{L^\infty(\Gamma_{h,*})}\le\frac12$, then by \cite[Lemma~4.3]{KLLP}, for $s=0,1$ and $1\le p\le\infty$,
\begin{equation}\label{eq:transfer-norms}
	C^{-1}\norm{u_h}{W^{s,p}(\Gamma_h)} \le \norm{\widehat u_h}{W^{s,p}(\Gamma_{h,*})} \le C\norm{u_h}{W^{s,p}(\Gamma_h)}.
\end{equation}

Let $\Pi_{h,*}:L^2(\Gamma_{h,*})\to S_{h,*}$ denote the $L^2$ projection, defined by
\begin{equation*}
	\int_{\Gamma_{h,*}}\Pi_{h,*}f\,\chi_h\, = \int_{\Gamma_{h,*}}f\chi_h\, \qquad \forall\chi_h\in S_{h,*}.
\end{equation*}
Analogously, let $\Pi_h:L^2(\Gamma_h)\to S_h$ denote the $L^2$ projection on $\Gamma_h$. Both projections are stable in $L^p$, $1\le p\le\infty$; see \cite[Lemma~3.3]{HLT}.

Let $n_{h,*}$ and $n_h$ denote the geometric unit normals to $\Gamma_{h,*}$ and $\Gamma_h$, respectively, and define their $L^2$ projections by
\begin{equation*}
	\overline n_{h,*}=\Pi_{h,*}n_{h,*}, \qquad \overline n_h=\Pi_h n_h.
\end{equation*}
The corresponding normal and tangential projection matrices are
\begin{equation*}
	\overline N_{h,*} = \frac{\overline n_{h,*}\otimes\overline n_{h,*}}{|\overline n_{h,*}|^2}, \qquad \overline T_{h,*}=I-\overline N_{h,*}.
\end{equation*}
These definitions follow \cite[Section~3.1]{HLT}. For the normal vectors defined above, we have the following estimates:
\begin{lemma}[{\cite[Lemmas~3.4--3.5]{HLT}}]\label{lem:normal-estimates}
	The following estimates hold:
	\begin{subequations}\label{eq:normal-estimates}
		\begin{align}
			\norm{\overline{n}_{h,*} - \widehat{\overline n}_h }{L^2(\Gamma_{h,*})} + h \norm{\overline{n}_{h,*} - \widehat{\overline n}_h }{L^\infty(\Gamma_{h,*})} &\le C\norm{e_x}{H^1(\Gamma_{h,*})}, \\
			\norm{\overline{n}_{h,*} - n_{h,*}}{L^2(\Gamma_{h,*})} + h \norm{\overline{n}_{h,*} - n_{h,*}}{L^\infty(\Gamma_{h,*})} &\le C h^k, \\
			\norm{\overline{n}_{h,*}-n_*}{L^2(\Gamma_{h,*})} + h \norm{\overline{n}_{h,*}-n_*}{L^\infty(\Gamma_{h,*})} &\le Ch^k.
		\end{align}
	\end{subequations}
\end{lemma}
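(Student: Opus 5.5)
The three estimates are stated as cited from HLT, but they can be derived directly from the stability of the $L^2$ projection and the geometric approximation properties. I would prove them in the order (b), (c), (a).

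\textbf{Estimate (b).} Since $\overline n_{h,*}=\Pi_{h,*}n_{h,*}$, we have
\begin{equation*}
	\overline n_{h,*}-n_{h,*}=(\Pi_{h,*}-I)(n_{h,*}-I_hn_*),
\end{equation*}
because $I_hn_*\in S_{h,*}^3$ is reproduced by $\Pi_{h,*}$. By the $L^p$ stability of $\Pi_{h,*}$ it then suffices to bound $n_{h,*}-I_hn_*$. The difference $n_{h,*}-n_*$ is $O(h^k)$ in $L^\infty$, since the geometric normal of a degree-$k$ isoparametric interpolant deviates from the exact normal by $O(h^k)$ (Demlow, Prop.~2.3). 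The difference $n_*-I_hn_*$ is $O(h^{k+1})$ in $L^\infty$ by interpolation. This gives an $L^\infty$ bound $Ch^k$, which in fact is stronger than the stated $Ch^{k-1}$ and also implies the $L^2$ bound.

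\textbf{Estimate (c).} This follows from the triangle inequality with (b) and $\|n_{h,*}-n_*\|_{L^\infty}\le Ch^k$.

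\textbf{Estimate (a).} This is the main part. First transfer everything to $\Gamma_{h,*}$. Then
\begin{equation*}
	\widehat{\overline n}_h-\overline n_{h,*}
\end{equation*}
splits into two terms: $\Pi$ applied to the difference of the geometric normals, $\widehat n_h-n_{h,*}$, plus the difference between the two projections $\widehat{\Pi}_h$ and $\Pi_{h,*}$.

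\emph{Geometric normals.} Elementwise, the normal is a smooth function of the tangent vectors $\nabla_{\Gamma_{h,*}}X$. The parametrizations differ by $e_x$, so a pointwise Lipschitz bound gives $|\widehat n_h-n_{h,*}|\le C|\nabla_{\Gamma_{h,*}}e_x|$. This uses the assumption $\|\nabla e_x\|_{L^\infty}\le 1/2$ to keep the map nondegenerate.

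\emph{Projections.} The two projections differ only through the change of the mass weight. That weight differs by the factor $1+O(|\nabla e_x|)$, so the difference is bounded in $L^2$ by $C\|\nabla e_x\|_{L^2}$ times $L^\infty$ norms of bounded normals. One writes the defining relation of $\widehat\Pi_h$ on $\Gamma_{h,*}$ with the Jacobian $J=1+O(\nabla e_x)$ and tests with the difference itself.

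\emph{Conclusion.} Together with \eqref{eq:transfer-norms} this gives the $L^2$ bound $C\|e_x\|_{H^1}$. The $L^\infty$ bound with the factor $h$ then follows from the inverse estimate \eqref{eq:inverse}, applied to the finite element function $\overline n_{h,*}-\widehat{\overline n}_h$.

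\textbf{Main obstacle.} The delicate step is the $L^\infty$ part of (a). With $q=2$, $p=\infty$ in two dimensions the inverse estimate gives exactly the factor $h^{-1}$, which matches the claimed $h\|\cdot\|_{L^\infty}$. One has to make sure this is applied to a genuine finite element function, and not to the non-polynomial geometric normals.
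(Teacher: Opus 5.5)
Your argument is correct. It also does more than the paper, which gives no proof of this lemma and simply imports the three estimates from \cite{HLT}.

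The tools the paper takes from that source for comparing quantities on $\Gamma_{h,*}$ and $\Gamma_h$ suggest a homotopy route for (a). These tools are the intermediate surfaces $\Gamma_{h,\theta}$ and the derivative formulas \eqref{eq:projection-derivative}--\eqref{eq:normal-derivative}, which the appendix uses for $J$. Along that route one writes $\widehat{\overline n}_h-\overline n_{h,*}=\int_0^1\partial_\theta^\bullet(\Pi_{h,\theta}n_{h,\theta})\dd\theta$ and bounds the integrand in $L^2$ by $C\norm{\nabla_{\Gamma_{h,\theta}}e_x}{L^2(\Gamma_{h,\theta})}$.

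Your static splitting is in effect the integrated form of that computation. It has two ingredients:
\begin{itemize}
\item the pointwise Lipschitz bound $|\widehat n_h-n_{h,*}|\le C|\nabla_{\Gamma_{h,*}}e_x|$;
\item the weighted Galerkin relation $\int_{\Gamma_{h,*}}(\widehat\Pi_hg-\Pi_{h,*}g)\chi_h\mu=\int_{\Gamma_{h,*}}(g-\Pi_{h,*}g)\chi_h(\mu-1)$ for all $\chi_h\in S_{h,*}$, tested with $\chi_h=\widehat\Pi_hg-\Pi_{h,*}g$.
\end{itemize}
Here $\widehat\Pi_h$ is the transferred projection, $g$ a bounded field, and $\mu$ the Jacobian of $\Id+e_x$, so that $\mu\ge c$ and $|\mu-1|\le C|\nabla_{\Gamma_{h,*}}e_x|$.

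Each approach has its advantage. Yours needs stability of the projections only on the two endpoint surfaces, with no norm equivalences uniform in $\theta$. The $\theta$-route has the benefit that exactly the same calculus is reused for $J$ in the proof of Lemma~\ref{lem:tangential-identity}.

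For (b), your reduction to $(\Pi_{h,*}-I)(n_{h,*}-I_hn_*)$ together with $L^\infty$ stability of $\Pi_{h,*}$ gives $\norm{\overline n_{h,*}-n_{h,*}}{L^\infty(\Gamma_{h,*})}\le Ch^k$. This is one power better than what the lemma records.

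You are also right to invoke $\norm{\nabla_{\Gamma_{h,*}}e_x}{L^\infty(\Gamma_{h,*})}\le\frac12$ in (a). The printed statement leaves this nondegeneracy hypothesis implicit. Your proof makes clear where it enters: the lower bound $\mu\ge c$ and the Lipschitz dependence of the normal on the tangent vectors.
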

Moreover, if $\norm{\nabla_{\Gamma_{h,*}}e_x}{L^\infty(\Gamma_{h,*})}\le\frac12$, then $\widehat n_h=n_h\circ X_h\circ(X_{h,*})^{-1}$ satisfies
\begin{equation}\label{eq:normal-perturbation}
	\norm{\widehat n_h-n_{h,*}}{L^\infty(\Gamma_{h,*})}\le C\norm{\nabla_{\Gamma_{h,*}}e_x}{L^\infty(\Gamma_{h,*})}.
\end{equation}

For the normal projection operator, we have the following superapproximation estimate.
\begin{lemma}[{\cite[Lemma~3.9]{HLT}}]\label{lem:superapproximation-II}
	For any $u_h\in(S_{h,*})^3$,
	\begin{equation}\label{eq:superapproximation-II}
		\norm{(1-I_h)(\overline N_{h,*} u_h)}{L^2(\Gamma_{h,*})} + h \norm{(1-I_h)(\overline N_{h,*} u_h)}{H^1(\Gamma_{h,*})} \le C h^2\norm{u_h}{H^1(\Gamma_{h,*})}.
	\end{equation}
\end{lemma}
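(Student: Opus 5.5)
The plan is to reduce the estimate to the finite element superapproximation \eqref{eq:FE-superapproximation} of Lemma~\ref{lem:superapproximation}. The function $\overline N_{h,*}$ is not itself a finite element function: it is the rational expression $\overline n_{h,*}\otimes\overline n_{h,*}/|\overline n_{h,*}|^2$ built from $\overline n_{h,*}\in(S_{h,*})^3$. Since $\overline N_{h,*}u_h=\overline n_{h,*}\,(\overline n_{h,*}\cdot u_h)/|\overline n_{h,*}|^2$, the argument has three parts: a pointwise lower bound on $|\overline n_{h,*}|$, uniform $W^{1,\infty}$ bounds on $\overline n_{h,*}$ elementwise, and an elementwise Bramble--Hilbert argument for the smooth factor $1/|\overline n_{h,*}|^2$.

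\textbf{Step 1: properties of $\overline n_{h,*}$.} By Lemma~\ref{lem:normal-estimates}, $\norm{\overline n_{h,*}-n_*}{L^\infty}\le Ch^{k-1}$, so $|\overline n_{h,*}|\ge 1/2$ for small $h$. Next I bound $\overline n_{h,*}$ on each element in $W^{m,\infty}$, $0\le m\le k+1$ (broken norms). Split $\overline n_{h,*}=I_hn_*+(\overline n_{h,*}-I_hn_*)$. The first part is elementwise smooth with bounded derivatives, because $\Gamma_{h,*}$ is an isoparametric interpolation of a smooth surface. For the second part, the $L^\infty$ estimate gives $\norm{\overline n_{h,*}-I_hn_*}{L^\infty}\le Ch^{k-1}$. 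The inverse estimate \eqref{eq:inverse}, applied elementwise, then gives $|\overline n_{h,*}-I_hn_*|_{W^{m,\infty}(K)}\le Ch^{k-1-m}$. This is bounded for $m\le k-1$, and the degree-$k$ components contribute at most $Ch^{-1}$.

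\textbf{Step 2: reduction.} Write $g:=|\overline n_{h,*}|^{-2}$ and $w_h:=\overline n_{h,*}\cdot u_h$. Since $\overline n_{h,*}$ and $u_h$ both lie in $S_{h,*}$, the product $w_h$ is a piecewise polynomial of degree $2k$. I decompose
\begin{equation*}
(1-I_h)(\overline N_{h,*}u_h)=(1-I_h)\bigl(\overline n_{h,*}\,I_h(g w_h)\bigr)+I_h\bigl(\overline n_{h,*}(1-I_h)(gw_h)\bigr)+(1-I_h)\bigl(\overline n_{h,*}(1-I_h)(gw_h)\bigr).
\end{equation*}
The first term is $(1-I_h)$ applied to a product of the finite element function $\overline n_{h,*}$ with a finite element function. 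So \eqref{eq:FE-superapproximation} bounds it by $Ch^2\norm{\overline n_{h,*}}{W^{1,\infty}}\norm{I_h(gw_h)}{H^1}$, and $\norm{I_h(gw_h)}{H^1}\le C\norm{u_h}{H^1}$ by the same arguments as in Step 3 below. The remaining two terms reduce, by $H^1$-stability of $I_h$ on piecewise smooth functions of controlled derivatives, to bounding $(1-I_h)(gw_h)$.

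\textbf{Step 3: the key estimate, and the expected obstacle.} The central claim is
\begin{equation*}
\norm{(1-I_h)(gw_h)}{L^2}+h\norm{(1-I_h)(gw_h)}{H^1}\le Ch^2\norm{u_h}{H^1}.
\end{equation*}
On each element I would use Bramble--Hilbert: the left side is bounded by $Ch^{k+1}|gw_h|_{H^{k+1}(K)}$. The Leibniz rule distributes derivatives between $g$ and $w_h$. The derivatives of $w_h$ of order $j\ge 2$ are controlled by inverse estimates, $|w_h|_{H^j(K)}\le Ch^{1-j}\norm{w_h}{H^1(K)}$, and $\norm{w_h}{H^1}\le C\norm{u_h}{H^1}$ by Step 1. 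In each term, $h^{k+1}h^{1-j}$ times a derivative of $g$ of order $k+1-j$ must then be bounded by $Ch^2$. This requires $|g|_{W^{m,\infty}(K)}\le C$ for $m\le k-1$, which follows from Step 1 and the chain rule; the cases $m=k,k+1$ cost an extra $h^{-1}$, $h^{-2}$, which is exactly offset. Interpolation of a degree-$2k$ piecewise polynomial has the same structure, so I would instead mimic the proof of \cite[Lemma~3.9]{HLT} for $P_*$. I expect this bookkeeping of derivative orders to be the main obstacle, especially verifying the high-order bounds on $g$, for which the $h^{k-1}$ accuracy of $\overline n_{h,*}$ is essential. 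Summing over elements and using quasi-uniformity completes the proof.
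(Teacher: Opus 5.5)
The paper gives no proof of its own here; it quotes \cite[Lemma~3.9]{HLT}. Your Steps~1 and~2 are correct. In Step~2 the middle term is identically zero, since $(1-I_h)(gw_h)$ vanishes at every node. However, Step~2 reduces nothing: $(1-I_h)(gw_h)=(1-I_h)\bigl((g\,\overline n_{h,*})\cdot u_h\bigr)$ is a superapproximation problem of exactly the original type, so everything rests on Step~3.

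\textbf{The gap in Step~3.} Your argument fails at the Leibniz term in which all $k+1$ derivatives fall on $w_h$. By your own count this term is
$h^{k+1}\norm{g}{L^\infty(K)}|w_h|_{H^{k+1}(K)}\le Ch^{k+1}h^{-k}\norm{w_h}{H^1(K)}=Ch\norm{w_h}{H^1(K)}$.
The case $m=0$ of your requirement would need $\norm{g}{L^\infty}\le Ch$, which no property of $g$ can give. The inverse estimate $|w_h|_{H^{k+1}(K)}\le Ch^{-k}\norm{w_h}{H^1(K)}$ is sharp for a generic piecewise polynomial of degree $2k$. Once you collapse $\overline n_{h,*}\cdot u_h$ into such a $w_h$, you discard exactly the information that produces the second power of $h$. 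As written, you obtain only $\norm{(1-I_h)(\overline N_{h,*}u_h)}{L^2(\Gamma_{h,*})}\le Ch\norm{u_h}{H^1(\Gamma_{h,*})}$, and no gain at all in $H^1$. The closing remark about mimicking \cite{HLT} defers precisely this step. The high-order bounds on $g$ that you flagged as the main obstacle are not where the difficulty lies.

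\textbf{The missing idea.} The gain comes from the degree-$k$ factor alone. The $(k+1)$-st derivatives of a degree-$k$ isoparametric function vanish on affine elements, and in general they satisfy
$|u_h|_{H^{k+1}(K)}\le C\norm{u_h}{H^k(K)}\le Ch^{1-k}\norm{u_h}{H^1(K)}$.
So keep $u_h$ as the only finite element factor, and treat all of $\overline N_{h,*}=g\,\overline n_{h,*}\otimes\overline n_{h,*}$ as the coefficient. On each element, the interpolation estimate and the Leibniz rule give
\[
\norm{(1-I_h)(\overline N_{h,*}u_h)}{L^2(K)}+h\norm{(1-I_h)(\overline N_{h,*}u_h)}{H^1(K)}\le Ch^{k+1}\sum_{l=0}^{k+1}|\overline N_{h,*}|_{W^{l,\infty}(K)}\norm{u_h}{H^{k+1-l}(K)},
\]
with $|\cdot|_{W^{0,\infty}(K)}=\norm{\cdot}{L^\infty(K)}$. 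Your Step~1 bounds and the chain rule give $|\overline N_{h,*}|_{W^{l,\infty}(K)}\le C$ for $l\le k-1$, and at worst $Ch^{-2}$ for $l=k,k+1$. The terms are then bounded as follows:
\begin{itemize}
\item $l=0$: bounded by $Ch^2\norm{u_h}{H^1(K)}$, using the degree-$k$ property above.
\item $1\le l\le k-1$: bounded by $Ch^{k+1}h^{l-k}\norm{u_h}{H^1(K)}\le Ch^2\norm{u_h}{H^1(K)}$, using inverse estimates.
\item $l=k,k+1$: bounded by $Ch^{k-1}\norm{u_h}{H^1(K)}$.
\end{itemize}
Squaring and summing over the elements proves the lemma, and the decomposition in Step~2 becomes unnecessary.
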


\subsection{The BGN method and the main theorem}\label{subsec:BGN}

Let $t_m = m\tau$, $X_0=\Id_{\Gamma^0}$, and $X_h^0 = I_hX_0$, so that $\Gamma_h^0 = X_h^0(\Gamma_h^0)$. The exactly integrated BGN method \cite[(3.1a)--(3.1b)]{GNPZ} is given as follows: Find $V_h^{m+1}\in S_h(\Gamma_h^m)^3$ and $H_h^{m+1}\in S_h(\Gamma_h^m)$ such that
\begin{subequations}\label{eq:BGN-mixed}
	\begin{align}
		\int_{\Gamma_h^m} (V_h^{m+1}\cdot n_h^m)\chi_h +\int_{\Gamma_h^m} H_h^{m+1}\chi_h&= 0, \quad \forall\chi_h\in S_h(\Gamma_h^m), \label{eq:BGN-mixed-normal} \\
		\int_{\Gamma_h^m} H_h^{m+1}n_h^m\cdot\phi_h-\int_{\Gamma_h^m} \nabla_{\Gamma_h^m}(\Id+\tau V_h^{m+1})\cdot \nabla_{\Gamma_h^m}\phi_h &=0, \quad \forall\phi_h\in S_h(\Gamma_h^m)^3. \label{eq:BGN-mixed-vector}
	\end{align}
\end{subequations}
Here $\Gamma_h^m = X_h^m(\Gamma_h^0)$, and the nodal values are updated by $X_h^{m+1}=X_h^m+\tau V_h^{m+1}$.

The first equation in \eqref{eq:BGN-mixed} gives
\begin{equation*}
	H_h^{m+1} = -\Pi_h^m(V_h^{m+1}\cdot n_h^m).
\end{equation*}
Substitution into the second equation yields the equivalent formulation
\begin{equation}\label{eq:BGN-eliminated}
	\int_{\Gamma_h^m} \Pi_h^m(V_h^{m+1}\cdot n_h^m) \phi_h\cdot n_h^m  + \int_{\Gamma_h^m} \nabla_{\Gamma_h^m}(\Id+\tau V_h^{m+1})\cdot \nabla_{\Gamma_h^m}\phi_h =0, 
\end{equation}
for all $\phi_h\in S_h(\Gamma_h^m)^3$. Our analysis is based on this formulation.

For closed embedded surfaces $\Gamma_i=\partial\Omega_i$, $i=1,2$, we measure their distance by the manifold distance \cite{ZJB21}:
\begin{equation*}
	\dman(\Gamma_1,\Gamma_2) = |\Omega_1\triangle\Omega_2|.
\end{equation*}

\begin{theorem}\label{thm:main}
	Let $k\ge6$ and suppose that $\tau=O(h^2)$, i.e., $c_0h^2\le\tau\le C_0h^2$ with constants $c_0,C_0>0$ independent of $h$. Suppose further that the equations \eqref{eq:approximation-flows} of the approximation flows admit solutions $Y^{[0]}$ and $Y^{[1]}$ satisfying Assumption~\ref{ass:regularity} with initial value $X_0$.

	Then, for sufficiently small $h$, the surfaces $\Gamma_h^m$ computed by the BGN method \eqref{eq:BGN-mixed} satisfy
	\begin{equation}\label{eq:main-manifold}
		\max_{t_m\le T} \dman\bigl( \Gamma_h^m,\Gamma[X(t_m)] \bigr) \le C\bigl(\tau+h^4+h^{k+1}\bigr) \le Ch^2.
	\end{equation}
	Here $X(t)$ evolves by mean curvature flow $V = -Hn$ with initial value $X_0$, and the constant $C$ is independent of $h$, $\tau$, and $m$.
\end{theorem}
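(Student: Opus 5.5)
The proof goes through a backward error analysis: we compare the BGN iterates not with the interpolated exact flow $X_{h,*}$ but with the interpolation of an approximation flow. The approximation flow $Y(t)=Y^{[0]}(t)+\tau Y^{[1]}(t)$ is obtained by expanding in time; here $Y^{[0]}$ is mean curvature flow with some tangential reparametrization, and $Y^{[1]}$ is a first-order correction. The construction in Section~\ref{sec:approximation-flows} (equations \eqref{eq:approximation-flows}, under Assumption~\ref{ass:regularity}) is designed so that $X_{h,*}^m:=I_hY(t_m)$ satisfies \eqref{eq:BGN-eliminated} up to a defect $d^m$. This defect splits into a normal part of size $O(\tau^2+h^{k-1})$ and a tangential part of size $O(\tau^3+\tau^2h^2+h^k)$. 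With $k\ge6$ and $\tau\simeq h^2$, both the normal defect and $\tau^{-1}$ times the tangential defect are $O(h^4)$.

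\textbf{Error equation and stability.} Set $e_x^m\in (S_{h,*}^m)^3$ with nodal vector $X_h^m-X_{h,*}^m$. Subtracting the defect equation from \eqref{eq:BGN-eliminated} and testing with $\phi_h=e_x^{m+1}-e_x^m$ (transported to $\Gamma_h^m$) gives the error equation. In it the stiffness term $\tau\int\nabla(\cdot)\cdot\nabla(\cdot)$ produces the discrete $H^1$ energy of $e_x$. The mass term, however, only controls the normal component $\Pi(\tau^{-1}(e^{m+1}-e^m)\cdot n)$. To handle the stiffness difference on the two surfaces, we use the decomposition of Bai and Li \cite{BaiLi}: it equals a perfect time difference of $\tfrac12\|\nabla e_x\|^2$, plus terms controlled by the normal velocity error, plus a remainder $D_h$.

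We then split the test function into normal and tangential parts using $\overline N_{h,*}$ and $\overline T_{h,*}$, together with the superapproximation results Lemma~\ref{lem:superapproximation-II} and \eqref{eq:P-superapproximation}. The tangential part of the error is controlled through the algebraic identity of Lemma~\ref{lem:tangential-identity}. This identity comes from differentiating $\int_\Gamma\nabla_\Gamma\Id\cdot\nabla_\Gamma\phi=\int_\Gamma Hn\cdot\phi$ along the perturbation $e_x$, applied to $\phi=I_h(P_*\psi_h)$. Its discrete version gains a factor $h^2$ from superapproximation, which the prefactor $\tau\simeq h^2$ exactly balances. This is where the condition $c_0h^2\le\tau$ enters.

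\textbf{Main obstacle.} The hardest step is bounding the normal-error term $D_h$ in this identity, which is Lemma~\ref{lem:D-bound}. It cannot be estimated directly at the required order. Instead, we insert the Bai--Li decomposition into the error equation so that $D_h$ is absorbed into the dissipative normal-velocity term, up to $\epsilon$-small pieces and $C\|e_x\|_{H^1}^2$.

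\textbf{Induction and conclusion.} The error inequality takes the form
\begin{equation*}
E^{m+1}-E^m+c\tau\|\text{normal velocity error}\|^2\le C\tau E^m+C\tau h^8,
\qquad E^m\simeq\|e_x^m\|_{H^1}^2 .
\end{equation*}
The defects were estimated above, with $\tau^{-1}$ times the tangential defect counted at $O(h^4)$. A discrete Gronwall argument then gives $\|e_x^m\|_{H^1}\le Ch^4$. We close this by a bootstrap on the $W^{1,\infty}$ smallness required in \eqref{eq:transfer-norms}: the inverse estimate \eqref{eq:inverse} gives $\|\nabla e_x\|_{L^\infty}\le Ch^{3}\le\tfrac12$.

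Finally, the manifold distance is bounded by the $L^1$ normal displacement. It is split as
\begin{equation*}
\dman(\Gamma_h^m,\Gamma[X(t_m)])\le \dman(\Gamma_h^m,\Gamma_{h,*}^m)+\dman(\Gamma_{h,*}^m,\Gamma[Y(t_m)])+\dman(\Gamma[Y(t_m)],\Gamma[X(t_m)]).
\end{equation*}
The first term is at most $C\|e_x\|_{L^1}\le Ch^4$. The second is $Ch^{k+1}$ by \eqref{eq:interpolation} and the geometric estimates. For the third, $\Gamma[Y^{[0]}]$ coincides with the mean curvature flow surface $\Gamma[X]$ because tangential motion does not change the surface, so the third term is $C\tau$ from $\tau Y^{[1]}$. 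Summing the three gives $C(\tau+h^4+h^{k+1})\le Ch^2$.
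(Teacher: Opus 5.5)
Your outline has the paper's architecture, but the stability step rests on a false claim. The decomposition \eqref{eq:stiffness-decomposition} does not give a perfect time difference of $\frac12\|\nabla e_x\|^2$. Testing \eqref{eq:error-equation} with $e_V^{m+1}$ produces only the time difference of $\frac12A_{h,*}^N(e_x,e_x)$, the normal part, as in \eqref{eq:stiffness-normal-coercive}. Otherwise $e_x$ enters through $B(e_x^m,e_V^{m+1})$ and $J(e_V^{m+1})$. These are bounded by $\|e_x^m\|_{H^1}\|e_V^{m+1}\|_{L^2}$, with the full $L^2$ norm of $e_V^{m+1}$ including its tangential part, and the mass term does not control that part. (Also, $D_h$ is not a remainder of the Bai--Li decomposition; it enters through Corollary~\ref{cor:tangential-identity}.) So the normal test cannot give an inequality with $E^m\simeq\|e_x^m\|_{H^1}^2$ and only normal-velocity dissipation.

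What is missing is how the tangential test closes the argument. Take $\psi_h=I_h(\overline T_{h,*}^me_V^{m+1})$. By Corollary~\ref{cor:tangential-identity} and Lemma~\ref{lem:D-bound}, the right-hand side is $O(h^2)$, and dividing by $\tau\ge c_0h^2$ gives \eqref{eq:tangential-estimate}, a bound on $\|\psi_h\|_{H^1}$ with no factor $\tau$. This bound is used twice:
\begin{enumerate}
\item to close the normal estimate, namely the $\varrho$-term and the tangential part of $\|e_V^{m+1}\|_{L^2}$;
\item to control the increment $\tau I_h(P_*^me_V^{m+1})$ of the extra energy $\|e_x\|_{L^2}^2+\|I_h(P_*e_x)\|_{H^1}^2$.
\end{enumerate}
Together with $A_{h,*}^N(e_x,e_x)$, this extra energy forms $\mathcal E^m$, which is equivalent to $\|e_x^m\|_{H^1}^2$ by \eqref{eq:energy-equivalence}. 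Without this step the $H^1$ seminorm of the tangential component $I_h(P_*e_x)$ is not controlled. Then neither the bounds on $B$, $J$, $K$ nor the induction hypothesis $\|e_x\|_{H^1}\le h^{7/2}$ of Lemma~\ref{lem:tangential-identity} can be closed.

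Second, you misread the approximation flows. $Y^{[1]}$ is not a correction added to $Y^{[0]}$. It solves its own equation $Y^{[1]}_t=(V_0[Y^{[1]}]+\tau V_1[Y^{[1]}])\circ Y^{[1]}$ in \eqref{eq:approximation-flows}, and the error analysis compares $X_h^m$ with $I_hY^{[1]}(t_m)$. So the $O(\tau)$ bound for your third term is not automatic from a prefactor $\tau$; it requires the estimate $\|Y^{[1]}-Y^{[0]}\|_{H^1}\le C\tau$ of Lemma~\ref{lem:flow-comparison}. Also, Lemma~\ref{lem:manifold-distance} needs a $C^2$ reference surface, so it does not directly cover your first term, where the reference is the piecewise polynomial $\Gamma_{h,*}^m$. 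A straight-line homotopy with a degree argument would handle it. The paper avoids this by first summing the parametrization errors in $L^2$, \eqref{eq:error-estimate-Y0}. It then applies the lemma once with $W=Y^{[0]}(t_m)$, after bounding $\|\nabla_{\Gamma^0}U\|_{L^\infty}$ by the inverse estimate and by interpolation combined with Lemma~\ref{lem:flow-comparison}.
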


\section{Approximation flows and the improved defect}\label{sec:approximation-flows}
Throughout this section, $k\ge6$ and we assume that the embedding $Y$ satisfies the following regularity assumption.
\begin{assumption}[Regularity]\label{ass:regularity}
	Let $Y\in C^3([0,T];H^2(\Gamma^0)^3)$, $Y:\Gamma^0\times[0,T]\to\R^3$, be a regular embedding for which there exist constants $M,c>0$ such that
	\begin{equation}\label{eq:regularity}
		\begin{aligned}
			&\sup_{0\le t\le T} \Bigl( \norm{Y(t)}{W^{k+2,\infty}(\Gamma^0)} +\norm{Y_t(t)}{W^{k,\infty}(\Gamma^0)} +\norm{Y_{tt}(t)}{H^2(\Gamma^0)} +\norm{Y_{ttt}(t)}{H^2(\Gamma^0)} \Bigr) \le M, \\
			&\inf_{0\le t\le T}|\nabla_{\Gamma^0}Y(p,t)| \ge c, \qquad p\in\Gamma^0.
		\end{aligned}
	\end{equation}
\end{assumption}

\subsection{The approximation flows}

For a regular embedding $Y$, set $\Gamma[Y]=Y(\Gamma^0)$, and denote by $n[Y]$, $H[Y]$, $A[Y]=\nabla_{\Gamma[Y]}n[Y]$ and $P[Y]=I-n[Y]\otimes n[Y]$ the unit normal, mean curvature, Weingarten map, and tangential projection on $\Gamma[Y]$, respectively. Let $V=Y_t\circ Y^{-1}$ be the velocity. For a field $f$ on $\Gamma[Y]$, its material derivative is $\partial^\bullet f = \partial_t(f\circ Y)\circ Y^{-1}$. The commutator formula for $[\partial^\bullet, \nabla_\Gamma]$ is \cite[Lemma~38(i)]{BGN20}
\begin{equation}\label{eq:commutator}
	\partial^\bullet(\nabla_\Gamma f)=\nabla_\Gamma(\partial^\bullet f)-\bigl(\nabla_\Gamma V-n\otimes n\,(\nabla_\Gamma V)^T\bigr)\nabla_\Gamma f.
\end{equation}
The standard evolution identities \cite[Lemmas~37(i) and~39(i)]{BGN20} give
\begin{subequations}\label{eq:material-identities}
	\begin{align}
		\partial^\bullet n[Y] &= -(\nabla_\Gamma V)n[Y], \\
		\partial^\bullet H[Y] &= -\Delta_\Gamma(V\cdot n[Y]) -|A[Y]|^2(V\cdot n[Y]) +V\cdot\nabla_\Gamma H[Y].
	\end{align}
\end{subequations}
Moreover, the strong form of the transport formula \cite[Lemma~5.5]{Kovacs} reads
\begin{equation}\label{eq:Laplacian-commutator}
	\partial^\bullet(\Delta_\Gamma f) = \Delta_\Gamma\partial^\bullet f + \nabla_{\Gamma}\cdot\bigl(\mathcal E_Y(V)\nabla_\Gamma f\bigr) - (\nabla_{\Gamma}\cdot V)\Delta_\Gamma f,
\end{equation}
where $\mathcal E_Y(V) = (\nabla_{\Gamma}\cdot V)P[Y] - P[Y]\bigl(\nabla_\Gamma V+(\nabla_\Gamma V)^T\bigr)P[Y]$.

For $t+\tau\le T$, define the averaged velocity by
\begin{equation}\label{eq:averaged-velocity}
	\overline V_\tau[Y]=\frac{Y(t+\tau)-Y(t)}\tau\circ Y(t)^{-1}.
\end{equation}
Let $Y_{h,*}(t)=I_hY(t)$ and $\Gamma_{h,*}(t)=Y_{h,*}(t)(\Gamma_h^0)$; the notation of Section~\ref{sec:discretization} (lifts, $n_*$, $P_*$, $n_{h,*}$, $\Pi_{h,*}$, \dots) is used with $X$ replaced by $Y$. Its interpolation is
\begin{equation*}
	\overline V_{h,*}(t) = \frac{Y_{h,*}(t+\tau)-Y_{h,*}(t)}{\tau} \circ Y_{h,*}(t)^{-1}.
\end{equation*}

The continuous BGN defect of \eqref{eq:averaged-velocity} on $\Gamma[Y(t)]$ is defined by
\begin{equation}\label{eq:continuous-defect}
	r_\tau[Y]=(\overline V_\tau[Y]\cdot n)n-\lap(\Id+\tau\overline V_\tau[Y]), \qquad Pr_\tau[Y]=-\tau P\lap\overline V_\tau[Y],
\end{equation}
where the second identity follows from $P\lap\Id=-HPn=0$.

We construct the approximation flows by an expansion in time. Formally write
\begin{equation*}
	Y_t\circ Y^{-1}=V_0[Y]+\tau V_1[Y]+\tau^2V_2[Y]+\cdots,
\end{equation*}
it should make the defect $r_\tau[Y]$ to achieve a high order in $\tau$. 

We now define the correction terms $W_0, V_0, \alpha_1, W_1, V_1$ as functionals of $Y$.
\begin{subequations}\label{eq:correction-fields}
	\begin{align}
		W_0[Y]\cdot n[Y]&=0, \qquad P[Y]\Delta_\Gamma W_0[Y] = P[Y]\Delta_\Gamma\bigl(H[Y]n[Y]\bigr), \label{eq:W0} \\
		V_0[Y]&=-H[Y]n[Y]+W_0[Y], \nonumber \\
		\alpha_1[Y] &= -\frac12\Delta_\Gamma H[Y] +\frac32H[Y]|A[Y]|^2 \nonumber \\
		&\quad -2A[Y]:\nabla_\Gamma W_0[Y] +\frac12W_0[Y]\cdot A[Y]W_0[Y], \label{eq:alpha1} \\
		W_1[Y]\cdot n[Y]&=0, \nonumber \\
		P[Y]\Delta_\Gamma W_1[Y] &= -P[Y]\Delta_\Gamma\bigl(\alpha_1[Y]n[Y]\bigr) +\frac12P[Y]\nabla_{\Gamma}\cdot \bigl(\mathcal E_Y(V_0[Y])\nabla_\Gamma V_0[Y]\bigr) \nonumber \\
		&\quad -\frac12 \bigl(\Delta_\Gamma V_0[Y]\cdot n[Y]\bigr) \bigl(\nabla_\Gamma H[Y]+A[Y]W_0[Y]\bigr), \label{eq:W1} \\
		V_1[Y]&=\alpha_1[Y]n[Y]+W_1[Y]. \nonumber
	\end{align}
\end{subequations}

The approximation flows are the solutions on $\Gamma^0$ of
\begin{equation}\label{eq:approximation-flows}
	\begin{aligned}
		Y_t^{[0]}&=V_0[Y^{[0]}]\circ Y^{[0]},& Y^{[0]}(0)&=X_0, \\
		Y_t^{[1]}&=(V_0[Y^{[1]}]+\tau V_1[Y^{[1]}])\circ Y^{[1]},& Y^{[1]}(0)&=X_0.
	\end{aligned}
\end{equation}

First we show that $W_0, W_1$ are well-defined.
\begin{lemma}\label{lem:tangent-elliptic}
	Let $2\leq j\leq k$. For every $f \in H^j(\Gamma[Y])$ and $g\in H^{j-2}(\Gamma[Y])^3$ satisfying $P[Y] g = g$, there exists a unique $u\in H^j(\Gamma[Y])^3$ with $u\cdot n[Y]=f$ and $P[Y]\lap u=g$. Moreover,
	\begin{equation}\label{eq:tangent-elliptic-bound}
		\norm{u\circ Y}{H^j(\Gamma^0)} \le C\bigl(\norm{f\circ Y}{H^j(\Gamma^0)} + \norm{g\circ Y}{H^{j-2}(\Gamma^0)}\bigr),
	\end{equation}
	where $C$ depends only on $M$, $c$ and $\Gamma^0$.
\end{lemma}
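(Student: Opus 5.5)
Write $u = fn + w$ with $w\cdot n=0$, where $n=n[Y]$, $P=P[Y]$. The condition $u\cdot n=f$ then holds automatically, and the equation $P\lap u=g$ becomes
\[
P\lap w = g - P\lap(fn) =: \tilde g .
\]
Since $P\lap(fn) = 2A\nabla_\Gamma f + f\,P\lap n$ up to sign conventions, $\tilde g$ is tangential and lies in $H^{j-2}$, with norm bounded by $C(\norm{f}{H^{j-1}}+\norm{g}{H^{j-2}})$. Here $n\in W^{k+1,\infty}$ by Assumption~\ref{ass:regularity}, which is enough regularity because $j\le k$. So it suffices to solve, for tangential vector fields $w$, the problem $Lw:=P\lap w=\tilde g$.

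For tangential $w$ we have $\gr w = P\gr w + n\otimes(\gr w)^Tn$. The identity $w\cdot n=0$ gives $(\gr w)^T n = -A w$, so
\[
P\lap w = \Delta_B w - A^2 w
\]
modulo the precise form of the lower-order term, where $\Delta_B$ is the Bochner (rough) Laplacian $P\dv(P\gr w\,P)$ on tangent fields. To see the sign structure, I would use the weak form: for a tangential test field $\phi$,
\[
-\int_\Gamma P\lap w\cdot\phi=\int_\Gamma \gr w:\gr\phi
=\int_\Gamma P\gr w:P\gr\phi + (Aw)\cdot(A\phi).
\]
This bilinear form $a(w,\phi)$ is symmetric, and $a(w,w)\ge\norm{\nabla^{\mathrm{cov}}w}{L^2}^2$.

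Uniqueness is where I expect the main obstacle. I need coercivity $a(w,w)\ge c\norm{w}{H^1}^2$ on the tangential subspace of $H^1$. If $a(w,w)=0$, then $w$ is parallel and $Aw=0$; in particular $|w|$ is constant. The pointwise form gives no contradiction directly, so I would argue globally. A nonzero parallel field has constant length, which forces a torus topology by Poincaré--Hopf. On a closed surface embedded in $\R^3$ there are points of strictly positive Gauss curvature, e.g.\ a point of maximal distance from the origin, where $A$ is invertible. There $Aw=0$ forces $w=0$, and constant length gives $w\equiv0$. So the kernel is trivial. A standard compactness (Peetre–Tartar) argument combined with Gårding's inequality $a(w,w)+\norm{w}{L^2}^2\ge c\norm{w}{H^1}^2$ then yields coercivity. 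Lax–Milgram gives a unique weak solution $w$ with $\norm{w}{H^1}\le C\norm{\tilde g}{H^{-1}}$.

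Regularity: $L$ is a strongly elliptic system with principal part $\Delta_B$, and its coefficients lie in $W^{k,\infty}$ after pulling back to $\Gamma^0$ via $Y$. Standard elliptic regularity, localized in charts and applied with the difference-quotient bootstrap, gives $\norm{w}{H^j}\le C(\norm{\tilde g}{H^{j-2}}+\norm{w}{L^2})$ for $j\le k$. Combined with the $H^1$ bound, this yields \eqref{eq:tangent-elliptic-bound}.

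Uniformity: every constant depends on $\Gamma[Y]$ only through $\norm{Y}{W^{k+2,\infty}}\le M$ and the lower bound $|\nabla_{\Gamma^0}Y|\ge c$. For the coercivity constant, the compactness argument is not uniform as written. I would make it uniform by contradiction: take a sequence of embeddings $Y_n$ with these bounds and normalized fields with $a(w_n,w_n)\to0$. Arzelà–Ascoli gives a convergent subsequence of embeddings, whose limit is again an embedding if regular embeddedness is preserved in the limit (the lower bound on $|\nabla Y|$ controls regularity, and I would need an additional mild argument for injectivity). Passing to the limit produces a nonzero kernel element on the limit surface, contradicting the uniqueness step above.
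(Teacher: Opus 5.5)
Your proof is correct, and its core is the same as the paper's. Both rely on coercivity of $\int_\Gamma \nabla_\Gamma w:\nabla_\Gamma\phi$ on tangential $H^1$ fields (the paper's tangential Poincar\'e inequality $\|Pu\|_{L^2}\le C\|\nabla_\Gamma(Pu)\|_{L^2}$), followed by Lax--Milgram and elliptic regularity.

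The routes differ in how the constraint $u\cdot n=f$ is handled:
\begin{itemize}
\item You lift it off with $u=fn+w$, using that $P\Delta_\Gamma(fn)$ involves only $\nabla_\Gamma f$. You then solve $P\Delta_\Gamma w=\tilde g$ on the closed subspace of tangential fields.
\item The paper works on all of $H^1(\Gamma)^3$ with the augmented operator $\mathcal L u=-P\Delta_\Gamma(Pu)+n(1-\Delta_\Gamma)(u\cdot n)$. Its energy is $\|\nabla_\Gamma(Pu)\|^2+\|u\cdot n\|_{H^1}^2$ and its principal part is the componentwise $-\Delta_\Gamma$. The constraint is recovered from the normal component $(1-\Delta_\Gamma)(u\cdot n-f)=0$.
\end{itemize}
The paper's device gives an unconstrained system to which elliptic regularity applies directly. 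Your constrained formulation needs a little more for regularity: either the tangent-frame (Bochner) viewpoint you sketch, or testing with $P\psi$ for arbitrary $\psi\in H^1(\Gamma)^3$. The latter turns your weak equation into exactly $\mathcal L w=-\tilde g$. In return, you justify the Poincar\'e inequality and the dependence of the constants only on $M$, $c$, $\Gamma^0$; the paper takes both for granted.

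Two simplifications are available.
\begin{itemize}
\item Since $a(w,w)=\|\nabla_\Gamma w\|_{L^2}^2$, a kernel element is a constant vector $c\in\R^3$ tangent to $\Gamma$ everywhere. At a maximum point of $x\mapsto c\cdot x$ on $\Gamma$ one has $n=\pm c/|c|$, so $c=0$. The Poincar\'e--Hopf step is therefore superfluous; your maximal-distance argument already suffices without it.
\item Both this argument and yours use only that $\Gamma$ is a closed immersed surface. So in your contradiction argument for uniform coercivity, the limit of the $Y_n$ only needs to be an immersion. The uniform $W^{k+2,\infty}$ and nondegeneracy bounds provide this, so the extra injectivity argument you anticipate is not needed.
\end{itemize}
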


\begin{proof}
	We omit $[Y]$ for simplicity. Let $\mathcal{L} u= -P\lap u +P\lap\bigl((u\cdot n)n\bigr) +n(1-\lap)(u\cdot n)$. Integration by parts gives
	\begin{equation*}
		\int_{\Gamma[Y]}(\mathcal{L} u)\cdot v = \int_{\Gamma[Y]} \Bigl( \gr(Pu)\cdot\gr(Pv) +\gr(u\cdot n)\cdot\gr(v\cdot n) +(u\cdot n)(v\cdot n) \Bigr),
	\end{equation*}
	which is coercive on $H^1(\Gamma[Y])^3$ by the tangential Poincar\'e inequality $\norm{Pu}{L^2(\Gamma[Y])}\le C\norm{\gr(Pu)}{L^2(\Gamma[Y])}$. By the Lax--Milgram theorem and elliptic regularity, $\mathcal L:H^j(\Gamma[Y])^3\to H^{j-2}(\Gamma[Y])^3$ is an isomorphism. Since $Pg=g$, taking the normal and the tangential components shows that $u$ satisfies $u\cdot n=f$ and $P\lap u=g$ if and only if $\mathcal Lu=-g+P\lap(f n)+n(1-\lap)f$: the normal component is $(1-\lap)(u\cdot n-f)=0$, and then the tangential component reduces to $P\lap u=g$. This gives existence and uniqueness, and $\norm{u}{H^j(\Gamma[Y])}\le C\norm{\mathcal Lu}{H^{j-2}(\Gamma[Y])}\le C(\norm{f}{H^j(\Gamma[Y])}+\norm{g}{H^{j-2}(\Gamma[Y])})$. The norm equivalence along $Y$ yields \eqref{eq:tangent-elliptic-bound}.
\end{proof}

\begin{lemma}[Regularity of the correction terms]\label{lem:field-bounds}
	The problems \eqref{eq:W0} and \eqref{eq:W1} have unique solutions $W_0[Y]\in H^k(\Gamma[Y])^3$ and $W_1[Y]\in H^{k-2}(\Gamma[Y])^3$, and
	\begin{equation}\label{eq:field-bounds}
		\begin{aligned}
			\sup_{0\le t\le T}\Bigl( &\norm{V_0[Y]\circ Y}{H^k(\Gamma^0)} +\norm{V_1[Y]\circ Y}{H^{k-2}(\Gamma^0)} \\
			&+\norm{(\partial^\bullet V_0[Y])\circ Y}{H^{k-2}(\Gamma^0)} +\norm{(\partial^\bullet V_1[Y])\circ Y}{H^{k-4}(\Gamma^0)} \Bigr) \le C,
		\end{aligned}
	\end{equation}
	where $C$ depends only on $M$, $c$ and $\Gamma^0$. The same bounds hold for $W_0[Y], \alpha_1[Y], W_1[Y]$ and their material derivatives.
\end{lemma}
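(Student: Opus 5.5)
The plan is to apply Lemma~\ref{lem:tangent-elliptic} twice, first with $j=k$ for $W_0$ and then with $j=k-2$ for $W_1$, and to read off regularity from Assumption~\ref{ass:regularity}. Throughout we use that $Y(t)\in W^{k+2,\infty}(\Gamma^0)$ with $|\nabla_{\Gamma^0}Y|\ge c$. Hence $n[Y]\circ Y\in W^{k+1,\infty}$, $A[Y]\circ Y\in W^{k,\infty}$ and $H[Y]\circ Y\in W^{k,\infty}$, all bounded in terms of $M$, $c$ and $\Gamma^0$. In addition, $P[Y]$ is a $W^{k+1,\infty}$ multiplier, and $\lap$ maps $H^{j}$ into $H^{j-2}$ with bounds depending only on these quantities.

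\emph{Step 1 ($W_0$, $V_0$).} In \eqref{eq:W0} take $f=0$ and $g=P\lap(Hn)$. Since $Hn\in H^k$, we have $g\in H^{k-2}$, and $Pg=g$ holds trivially. Lemma~\ref{lem:tangent-elliptic} with $j=k$ then gives a unique $W_0\in H^k$ with bound $C$. Hence $V_0=-Hn+W_0\in H^k$. By Sobolev embedding on the two-dimensional surface (using $k\ge6$), $W_0,V_0\in W^{k-2,\infty}$. This is needed to treat the products below.

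\emph{Step 2 ($\alpha_1$, $W_1$, $V_1$).} We have $\alpha_1\in H^{k-2}$, since $\lap H\in H^{k-2}$ (indeed $W^{k-2,\infty}$) and $A:\gr W_0\in H^{k-1}$, with the other terms smoother. Next consider the right-hand side of \eqref{eq:W1}. The term $P\lap(\alpha_1 n)$ lies in $H^{k-4}$. The term $\mathcal E_Y(V_0)\gr V_0$ is a product of $H^{k-1}$ functions, so it lies in $H^{k-1}$ because $H^{k-1}$ is an algebra in 2D. Its divergence therefore lies in $H^{k-2}$, and its projection is again in $H^{k-2}$. The last term is tangential and lies in $H^{k-2}$. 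Projecting with $P$ where necessary (the last term is already tangential, since $\gr H$ and $AW_0$ are tangential), we get $g\in H^{k-4}$ with $Pg=g$ and $f=0$. Lemma~\ref{lem:tangent-elliptic} with $j=k-2\ge 4\ge2$ yields $W_1\in H^{k-2}$, hence $V_1=\alpha_1 n+W_1\in H^{k-2}$. Uniqueness in both steps is part of Lemma~\ref{lem:tangent-elliptic}.

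\emph{Step 3 (material derivatives).} This is the main obstacle. Apply $\partial^\bullet$ to the defining equations. From $W_0\cdot n=0$ and \eqref{eq:material-identities} we get $\partial^\bullet W_0\cdot n=W_0\cdot(\gr V)n=:f'$. Using $\partial^\bullet P=-\partial^\bullet n\otimes n-n\otimes\partial^\bullet n$ together with \eqref{eq:Laplacian-commutator}, we obtain $P\lap\partial^\bullet W_0=g'$. Here $g'$ collects $\partial^\bullet$ of the right-hand side and the commutator terms $\dv(\mathcal E_Y(V)\gr W_0)$, $(\dv V)\lap W_0$ and $(\partial^\bullet P)\lap W_0$. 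The velocity is $V=Y_t\circ Y^{-1}\in W^{k,\infty}$. The derivatives $\partial^\bullet n$, $\partial^\bullet H$ and $\partial^\bullet A$ lose two derivatives of $V$ through \eqref{eq:material-identities}, so they lie in $H^{k-2}$. Because $k\ge6$, every product appearing has at least one factor in $W^{k-3,\infty}$. Consequently $f'\in H^{k-2}$ and $g'\in H^{k-4}$, with $Pg'=g'$ after projecting. The difficulty is to confirm $Pg'=g'$: the normal part of $\partial^\bullet$ of the equation $P\lap W_0=P\lap(Hn)$ must be split off consistently, which we handle by differentiating the equivalent equation $\mathcal L W_0=$ data from the proof of Lemma~\ref{lem:tangent-elliptic}.

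Lemma~\ref{lem:tangent-elliptic} with $j=k-2$ then gives $\partial^\bullet W_0\in H^{k-2}$, and hence $\partial^\bullet V_0\in H^{k-2}$. Repeating the argument for $W_1$ with $j=k-4\ge2$, the data now involve $\partial^\bullet$ of $\lap\alpha_1$ and of $\partial^\bullet V_0\in H^{k-2}$, which lie in $H^{k-6}$. This yields $\partial^\bullet W_1,\partial^\bullet\alpha_1\in H^{k-4}$. Finally, all constants depend only on $M$, $c$ and $\Gamma^0$, and are uniform in $t$. Converting norms on $\Gamma[Y]$ to norms on $\Gamma^0$ along $Y$ gives \eqref{eq:field-bounds}.
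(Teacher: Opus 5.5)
Your proof is correct and follows essentially the paper's route: Lemma~\ref{lem:tangent-elliptic} with $j=k$ and $j=k-2$ for the fields, then differentiation of the defining problems using the commutator formulas and the lemma again with $j=k-2$ and $j=k-4$; the only difference is that you solve for $W_0,W_1$ with $f=0$ and the full tangential data (for $W_1$ these lie only in $H^{k-4}$, resp.\ $H^{k-6}$ after differentiation, which is still exactly enough), whereas the paper rewrites the problems for $V_0,V_1$ as $V_0\cdot n=-H$, $P\lap V_0=0$ and $V_1\cdot n=\alpha_1$, so that $P\lap(\alpha_1 n)$ moves into the normal constraint. The step you flag as the main difficulty, $Pg'=g'$, is not actually an obstacle: applying $P$ to the differentiated equation and using $P\partial^\bullet n=\partial^\bullet n$ gives \eqref{eq:PLap-commutator} with a manifestly tangential right-hand side, so the detour through $\mathcal L$ is unnecessary.
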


\begin{proof}
	We omit $t$ and $[Y]$ for simplicity. By Assumption~\ref{ass:regularity}, $n\circ Y$ is bounded in $W^{k+1,\infty}(\Gamma^0)$, and $A\circ Y$, $H\circ Y$ and $V\circ Y=Y_t$ are bounded in $W^{k,\infty}(\Gamma^0)$. Below we use that $H^s$, $s\ge2$, is an algebra. \eqref{eq:W0} gives
	\begin{equation*}
		V_0\cdot n=-H,\qquad P\lap V_0=0.
	\end{equation*}
	Lemma~\ref{lem:tangent-elliptic} with $j=k$ gives unique solvability and $\norm{V_0\circ Y}{H^k(\Gamma^0)}\le C\norm{H\circ Y}{H^k(\Gamma^0)}\le C$. The same bound holds for $W_0=V_0+Hn$. Inserting this into \eqref{eq:alpha1} gives $\norm{\alpha_1\circ Y}{H^{k-2}(\Gamma^0)}\le C$.

	Similarly, \eqref{eq:W1} gives
	\begin{equation}\label{eq:V1-problem}
		V_1\cdot n=\alpha_1,\qquad P\lap V_1 = \frac12P\dv\bigl(\mathcal E_Y(V_0)\gr V_0\bigr) -\frac12(\lap V_0\cdot n)\bigl(\gr H+AW_0\bigr).
	\end{equation}
	Since $V_0 \in H^k$, the right-hand side of the second equation is bounded in $H^{k-2}$, and $\alpha_1$ is bounded in $H^{k-2}$. Hence Lemma~\ref{lem:tangent-elliptic} with $j=k-2$ gives unique solvability and $\norm{V_1\circ Y}{H^{k-2}(\Gamma^0)}\le C$. The same bound holds for $W_1=V_1-\alpha_1n$.

	To obtain the bounds for the material derivatives, we note that the identities \eqref{eq:material-identities} and the transport formula for $\gr$ give
	\begin{equation*}
		\norm{(\partial^\bullet n)\circ Y}{W^{k-1,\infty}(\Gamma^0)} +\norm{(\partial^\bullet H)\circ Y}{W^{k-2,\infty}(\Gamma^0)} +\norm{(\partial^\bullet A)\circ Y}{W^{k-2,\infty}(\Gamma^0)} \le C.
	\end{equation*}
	For a vector field $f$ on $\Gamma[Y]$, using $\partial^\bullet P=-\partial^\bullet n\otimes n-n\otimes\partial^\bullet n$, $Pn=0$ and $P\partial^\bullet n=\partial^\bullet n$, we have
	\begin{equation*}
		P\partial^\bullet(P\lap f) =P(\partial^\bullet P)\lap f+P\partial^\bullet(\lap f) =-(\lap f\cdot n)\,\partial^\bullet n+P\partial^\bullet(\lap f),
	\end{equation*}
	and inserting \eqref{eq:Laplacian-commutator} yields
	\begin{equation}\label{eq:PLap-commutator}
		P\lap(\partial^\bullet f) = P\partial^\bullet(P\lap f) +(\lap f\cdot n)\,\partial^\bullet n -P\dv\bigl(\mathcal E_Y(V)\gr f\bigr) +(\dv V)\,P\lap f .
	\end{equation}

	For $f=V_0$, since $P\lap V_0=0$ and $V_0\cdot n=-H$,
	\begin{equation}\label{eq:dV0-identities}
		(\partial^\bullet V_0)\cdot n =-\partial^\bullet H-V_0\cdot\partial^\bullet n, \qquad P\lap(\partial^\bullet V_0) =(\lap V_0\cdot n)\,\partial^\bullet n -P\dv\bigl(\mathcal E_Y(V)\gr V_0\bigr).
	\end{equation}
	The right-hand sides are bounded in $H^{k-2}$, so \eqref{eq:tangent-elliptic-bound} with $j=k-2$ gives $\norm{(\partial^\bullet V_0)\circ Y}{H^{k-2}(\Gamma^0)}\le C$, and the same bound holds for $\partial^\bullet W_0=\partial^\bullet V_0+\partial^\bullet(Hn)$. Differentiating \eqref{eq:alpha1} and using \eqref{eq:Laplacian-commutator}, the transport formula for $\gr$, and the bounds for $\partial^\bullet H$, $\partial^\bullet A$ and $\partial^\bullet W_0$ gives $\norm{(\partial^\bullet\alpha_1)\circ Y}{H^{k-4}(\Gamma^0)}\le C$.

	For $f=V_1$, since $V_1\cdot n=\alpha_1$,
	\begin{equation*}
		\begin{aligned}
			(\partial^\bullet V_1)\cdot n &=\partial^\bullet\alpha_1-V_1\cdot\partial^\bullet n, \\
			P\lap(\partial^\bullet V_1) &= P\partial^\bullet(P\lap V_1) +(\lap V_1\cdot n)\,\partial^\bullet n \\
			&\quad-P\dv\bigl(\mathcal E_Y(V)\gr V_1\bigr) +(\dv V)\,P\lap V_1 .
		\end{aligned}
	\end{equation*}
	Here $\partial^\bullet\alpha_1-V_1\cdot\partial^\bullet n$ is bounded in $H^{k-4}$. Differentiating the right-hand side of \eqref{eq:V1-problem} bounds $P\partial^\bullet(P\lap V_1)$ in $H^{k-4}$, and the remaining terms are bounded in $H^{k-4}$ since $V_1 \in H^{k-2}$. Hence \eqref{eq:tangent-elliptic-bound} with $j=k-4$ gives $\norm{(\partial^\bullet V_1)\circ Y}{H^{k-4}(\Gamma^0)}\le C$. The same bound for $\partial^\bullet W_1$ follows from $W_1=V_1-\alpha_1n$.
\end{proof}

\subsection{The improved defect}\label{subsec:improved-defect}

\begin{lemma}\label{lem:time-defect}
	Under Assumption~\ref{ass:regularity}, the approximation flows \eqref{eq:approximation-flows} satisfy, uniformly for $t+\tau\le T$,
	\begin{subequations}\label{eq:time-defect-bounds}
		\begin{equation}
			\norm{r_\tau[Y^{[0]}]}{L^2(\Gamma[Y^{[0]}(t)])} + \tau^{-1} \norm{P[Y^{[0]}]r_\tau[Y^{[0]}]} {L^2(\Gamma[Y^{[0]}(t)])} \le C\tau,
		\end{equation}
		\begin{equation}
			\norm{r_\tau[Y^{[1]}]}{L^2(\Gamma[Y^{[1]}(t)])} + \tau^{-1} \norm{P[Y^{[1]}]r_\tau[Y^{[1]}]} {L^2(\Gamma[Y^{[1]}(t)])} \le C\tau^2.
		\end{equation}
	\end{subequations}
\end{lemma}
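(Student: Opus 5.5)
The plan is a Taylor expansion of $\overline V_\tau[Y]$ in $\tau$ along the flow. We write $\overline V_\tau[Y](t)=\frac1\tau\int_0^\tau Y_t(t+s)\dd s\circ Y(t)^{-1}$. Expanding $Y_t(t+s)=Y_t(t)+sY_{tt}(t)+\frac{s^2}2 Y_{ttt}(\xi)$ gives
\begin{equation*}
	\overline V_\tau=V+\frac\tau2\,\partial^\bullet V+\tau^2R,
\end{equation*}
with $\norm{R\circ Y}{H^2(\Gamma^0)}\le C$ by Assumption~\ref{ass:regularity}. Here $V=Y_t\circ Y^{-1}$ and $\partial^\bullet V=Y_{tt}\circ Y^{-1}$. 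Since $\Id$ is independent of time, the defect becomes
\begin{equation*}
	r_\tau[Y]=(\overline V_\tau\cdot n)n-\lap\Id-\tau\lap\overline V_\tau
	=(\overline V_\tau\cdot n)n+Hn-\tau\lap V-\frac{\tau^2}{2}\lap\partial^\bullet V+O_{L^2}(\tau^3),
\end{equation*}
using $\lap\Id=-Hn$ and the $H^2$ bound on $R$. We then handle the normal and tangential parts separately. For the tangential part we use $Pr_\tau=-\tau P\lap\overline V_\tau$, so we need $P\lap\overline V_\tau=O(\tau^{j})$ in $L^2$ with $j=0$ for $Y^{[0]}$ and $j=1$ for $Y^{[1]}$. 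For the normal part we need $r_\tau\cdot n=O(\tau^{j+1})$.

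\emph{Case $Y^{[0]}$.} Here $V=V_0$, so $V\cdot n=-H$ and $P\lap V_0=0$ by \eqref{eq:W0}. Hence $r_\tau\cdot n=\tau(\tfrac12\partial^\bullet V\cdot n-\lap V\cdot n)+O(\tau^2)$, which is $O(\tau)$ by the bounds in Lemma~\ref{lem:field-bounds}. Also $P\lap\overline V_\tau=\frac\tau2P\lap\partial^\bullet V_0+O(\tau^2)=O(\tau)$. Both claims follow.

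\emph{Case $Y^{[1]}$.} Here $V=V_0+\tau V_1$ with everything evaluated on $Y^{[1]}$, and $\partial^\bullet V=\partial^\bullet V_0+O(\tau)$ with $\partial^\bullet V_0$ controlled by Lemma~\ref{lem:field-bounds}. The normal part at order $\tau$ is
\begin{equation*}
	\alpha_1+\tfrac12\partial^\bullet V_0\cdot n-\lap V_0\cdot n .
\end{equation*}
The tangential part at order $\tau$ is
\begin{equation*}
	P\lap V_1+\tfrac12P\lap\partial^\bullet V_0 .
\end{equation*}
Both must vanish identically, and this is exactly what the definitions \eqref{eq:alpha1} and \eqref{eq:W1} are designed to achieve.

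For the normal part we compute $\partial^\bullet V_0\cdot n=-\partial^\bullet H-V_0\cdot\partial^\bullet n$ from \eqref{eq:dV0-identities}, with $\partial^\bullet H$ and $\partial^\bullet n$ taken from \eqref{eq:material-identities} with $V=V_0=-Hn+W_0$. This gives
\begin{equation*}
	\partial^\bullet H=-\lap H\cdot(-1)\,\text{(i.e. }+\lap H)+\ldots=\lap H+H|A|^2+W_0\cdot\gr H ,
\end{equation*}
and $\partial^\bullet n=-(\gr V_0)n=\gr H-(\gr W_0)n=\gr H+AW_0$, where we use $W_0\cdot n=0$. Next we compute $\lap V_0\cdot n$. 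We have $\lap(Hn)\cdot n=\lap H-H|A|^2$, and $\lap W_0\cdot n$ follows by differentiating $W_0\cdot n=0$ twice, which gives $-2A:\gr W_0-W_0\cdot\lap n$ with $\lap n=\gr H-|A|^2n$ up to sign conventions. Collecting these terms should reproduce $-\alpha_1$. The $W_0\cdot AW_0$ term comes from $V_0\cdot\partial^\bullet n$, and the $W_0\cdot\gr H$ terms cancel. This bookkeeping of signs against the stated formula for $\alpha_1$ is routine but delicate.

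For the tangential part we apply \eqref{eq:dV0-identities} with $V=V_0$:
\begin{equation*}
	P\lap\partial^\bullet V_0=(\lap V_0\cdot n)\partial^\bullet n-P\dv(\mathcal E_Y(V_0)\gr V_0).
\end{equation*}
Since $\partial^\bullet n=\gr H+AW_0$, adding \eqref{eq:V1-problem} makes the order-$\tau$ tangential part vanish exactly. A subtlety is that $Y^{[1]}$ moves with velocity $V_0+\tau V_1$ rather than $V_0$. The identities of Lemma~\ref{lem:field-bounds} then carry an extra $O(\tau)$ error, which is controlled by the bounds on $V_1$ and $\partial^\bullet V_1$ and is harmless at order $\tau^2$.

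The remainders are bounded using Lemma~\ref{lem:field-bounds} for $k\ge6$, which gives at least $H^2$ control on $\partial^\bullet V_1$, together with the $H^2$ bounds on $Y_{tt},Y_{ttt}$ in Assumption~\ref{ass:regularity}. The main obstacle is the exact algebraic verification of the normal cancellation against \eqref{eq:alpha1}. In particular, the identity for $\lap W_0\cdot n$ requires care with the symmetry of $A$ and with $\lap n$. I would carry out that computation first, componentwise in a local frame.
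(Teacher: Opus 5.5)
Your proposal is correct and is essentially the paper's proof. It uses the same second-order Taylor expansion of $\overline V_\tau[Y]$ along the flow and the same reduction to showing that the order-$\tau$ coefficients $\alpha_1+\tfrac12(\partial^\bullet V_0)\cdot n-\lap V_0\cdot n$ and $P\lap\bigl(V_1+\tfrac12\partial^\bullet V_0\bigr)$ are $O(\tau)$ in $L^2$. It also obtains the cancellation the same way, via \eqref{eq:dV0-identities}, \eqref{eq:V1-problem} and \eqref{eq:material-identities}, and your sign bookkeeping for $\alpha_1$ does close.

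The only difference is that the paper writes out explicitly the residuals that come from the velocity being $V_0+\tau V_1$, namely $\tfrac\tau2\bigl(\lap\alpha_1+\alpha_1|A|^2-W_1\cdot\gr H+W_0\cdot\gr\alpha_1-W_0\cdot AW_1\bigr)$ and $\tfrac\tau2\bigl((\lap V_0\cdot n)(-\gr\alpha_1+AW_1)-P\dv(\mathcal E_Y(V_1)\gr V_0)\bigr)$. These residuals need only the $H^{k-2}$ bounds on $\alpha_1$ and $V_1$. The bound on $\partial^\bullet V_1$ enters only through the $\tau^2$ remainder, not through these $O(\tau)$ residuals as you suggest.
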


\begin{proof}
	We omit $t$ and $[Y]$ for simplicity. We only prove the estimate for $Y=Y^{[1]}$; the proof for $Y^{[0]}$ is analogous and omitted.

	For $Y=Y^{[1]}$, we have $Y_t\circ Y^{-1}=V_0+\tau V_1$, and hence $Y_{tt}\circ Y^{-1}=\partial^\bullet V_0+\tau\partial^\bullet V_1$. The second-order Taylor formula gives
	\begin{equation}\label{eq:averaged-velocity-Taylor}
		\begin{aligned}
			\overline V_\tau[Y]&=V_0+\tau\Bigl(V_1+\frac12\partial^\bullet V_0\Bigr)+\rho_1, \\
			\rho_1&=\frac{\tau^2}{2}\partial^\bullet V_1 +\frac{1}{2\tau}\int_0^\tau(\tau-s)^2Y_{ttt}(t+s)\circ Y(t)^{-1}\dd s,
		\end{aligned}
	\end{equation}
	and $\norm{\rho_1}{H^2(\Gamma[Y(t)])}\le C\tau^2$ by Lemma~\ref{lem:field-bounds} and Assumption~\ref{ass:regularity}. Using $V_0\cdot n=-H$, $V_1\cdot n=\alpha_1$ and $\lap V_0=(\lap V_0\cdot n)n$, we obtain
	\begin{equation*}
		\begin{aligned}
			r_\tau[Y^{[1]}] &=\tau\Bigl(\alpha_1+\frac12(\partial^\bullet V_0)\cdot n-\lap V_0\cdot n\Bigr)n+(\rho_1\cdot n)n \\
			&\quad -\tau^2\lap\Bigl(V_1+\frac12\partial^\bullet V_0\Bigr) -\tau\lap\rho_1, \\
			Pr_\tau[Y^{[1]}] &=-\tau^2P\lap\Bigl(V_1+\frac12\partial^\bullet V_0\Bigr) -\tau P\lap\rho_1 .
		\end{aligned}
	\end{equation*}
	Since $\norm{\rho_1}{H^2(\Gamma[Y(t)])}\le C\tau^2$, by Lemma~\ref{lem:field-bounds} it suffices to show
	\begin{equation*}
		\Bigl\|\alpha_1+\frac12(\partial^\bullet V_0)\cdot n-\lap V_0\cdot n\Bigr\|_{L^2(\Gamma[Y(t)])} +\Bigl\|P\lap\Bigl(V_1+\frac12\partial^\bullet V_0\Bigr)\Bigr\|_{L^2(\Gamma[Y(t)])} \le C\tau .
	\end{equation*}
	Combining \eqref{eq:dV0-identities} and \eqref{eq:V1-problem} with \eqref{eq:material-identities} gives
	\begin{equation*}
		\begin{aligned}
			&\alpha_1+\frac12(\partial^\bullet V_0)\cdot n-\lap V_0\cdot n \\
			&\quad=\frac{\tau}{2}\bigl(\lap\alpha_1+\alpha_1|A|^2-W_1\cdot\gr H+W_0\cdot\gr\alpha_1-W_0\cdot AW_1\bigr),
		\end{aligned}
	\end{equation*}
	\begin{equation*}
		P\lap\Bigl(V_1+\frac12\partial^\bullet V_0\Bigr)=\frac{\tau}{2}\Bigl((\lap V_0\cdot n)\bigl(-\gr\alpha_1+AW_1\bigr)-P\dv\bigl(\mathcal E_Y(V_1)\gr V_0\bigr)\Bigr).
	\end{equation*}
	The terms multiplied by $\tau$ are bounded in $L^2$ by Lemma~\ref{lem:field-bounds}. This proves \eqref{eq:time-defect-bounds}.
\end{proof}

For $\phi_h\in S_h(\Gamma_{h,*}(t))^3$, the (discrete) BGN defect is
\begin{equation}\label{eq:BGN-defect}
	d_h[Y](\phi_h) = \int_{\Gamma_{h,*}(t)} \Pi_{h,*}(\overline V_{h,*}\cdot n_{h,*})\, (\phi_h\cdot n_{h,*})+ \int_{\Gamma_{h,*}(t)} \nabla_{\Gamma_{h,*}}(\Id+\tau \overline V_{h,*})\cdot \nabla_{\Gamma_{h,*}}\phi_h.
\end{equation}

\begin{proposition}\label{prop:improved-defect}
	Under Assumption~\ref{ass:regularity}, for all $\phi_h\in S_h(\Gamma_{h,*}(t))^3$,
	\begin{align}
		|d_h[Y](\phi_h)| &\le C\bigl(\norm{r_\tau}{L^2(\Gamma[Y(t)])}+h^{k-1}\bigr) \norm{\phi_h}{L^2(\Gamma_{h,*}(t))}, \label{eq:full-defect} \\
		|d_h[Y](I_h(P_*\phi_h))| &\le C\bigl(\norm{Pr_\tau}{L^2(\Gamma[Y(t)])} +h^2\norm{r_\tau}{L^2(\Gamma[Y(t)])}+h^k\bigr) \nonumber \\
		& \qquad \norm{I_h(P_*\phi_h)}{H^1(\Gamma_{h,*}(t))}. \label{eq:tangential-defect}
	\end{align}
\end{proposition}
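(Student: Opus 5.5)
The plan is to compare the discrete defect $d_h[Y](\phi_h)$ with its continuous counterpart on $\Gamma=\Gamma[Y(t)]$, namely $\int_\Gamma r_\tau\cdot\phi_h^\ell$. On the smooth surface, integration by parts gives
\[
\int_\Gamma (\overline V_\tau\cdot n)(\phi\cdot n)+\int_\Gamma\gr(\Id+\tau\overline V_\tau)\cdot\gr\phi=\int_\Gamma r_\tau\cdot\phi .
\]
We then replace each discrete ingredient by its continuous analogue and bound the errors.

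\textbf{Step 1: the consistency errors.} First, $\overline V_{h,*}$ is the nodal interpolation of $\overline V_\tau$ (composed with $Y_{h,*}$), because $I_h$ is linear and commutes with the averaging in time. The regularity assumption and Lemma~\ref{lem:field-bounds} give $\overline V_\tau\circ Y$ bounded in $H^k$, uniformly in $\tau$. So \eqref{eq:interpolation} yields $\norm{\overline V_{h,*}^\ell-\overline V_\tau}{H^1}\le Ch^{k-1}$, and $\norm{\Id_{h,*}^\ell-\Id}{H^1}\le Ch^{k}$. For the normals we use Lemma~\ref{lem:normal-estimates}: $n_{h,*}-n_*=O(h^k)$ in $L^2$ and $O(h^{k-1})$ in $L^\infty$. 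For the $L^2$ projection we test against $\chi_h=\phi_h\cdot n_{h,*}$ (not a finite element function, hence an extra $\Pi_{h,*}$ step) and use \eqref{eq:lift-mass}, \eqref{eq:lift-stiffness} and \eqref{eq:lift-mix} to move the integrals to $\Gamma$. Each replacement costs $h^{k-1}\norm{\phi_h}{H^1}$ or better. In the stiffness term, integrating by parts on $\Gamma$ only needs the $L^2$ norm of $\phi_h$ when the $H^1$ errors are paired carefully. I therefore expect
\[
d_h[Y](\phi_h)=\int_\Gamma r_\tau\cdot\phi_h^\ell+E_h(\phi_h).
\]
The bound $|E_h(\phi_h)|\le Ch^{k-1}\norm{\phi_h}{H^1}$ is immediate. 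To get $\norm{\phi_h}{L^2}$ instead, I would write
\[
\int_{\Gamma_{h,*}}\gr I_hf\cdot\gr\phi_h-\int_\Gamma\gr f\cdot\gr\phi_h^\ell
\]
as the geometric error (bounded by $h^{k+1}\norm{\cdot}{H^1}$, then an inverse estimate gives $h^k\norm{\phi_h}{L^2}$) plus $\int_\Gamma\gr(I_hf^{-\ell})^\ell-f)\cdot\gr\phi_h^\ell$. The latter is bounded by $h^{k-1}\norm{\phi_h}{H^1}$, and the inverse estimate \eqref{eq:inverse} then gives $h^{k-2}\norm{\phi_h}{L^2}$. That is one power short, so instead I would use the fact that $\Id+\tau\overline V$ interpolated has $H^1$ error $h^k$ for $\Id$ (smooth, $W^{k+1}$) and $\tau h^{k-1}$ for the $\tau\overline V$ part. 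Since $\tau=O(h^2)$ is not assumed here, I would use $\tau h^{k-1}\cdot h^{-1}\le Ch^{k-1}$ for bounded $\tau\le 1$ with... This needs care. Alternatively, integrate by parts elementwise, using that $\Delta$ of the interpolation error is controlled in $L^2$ by $h^{k-1}$ with $H^{k+1}$ regularity of $Y$, so that the bound becomes $h^{k-1}\norm{\phi_h}{L^2}$ plus jump terms. This is the main obstacle for \eqref{eq:full-defect}. The normal terms are easily $O(h^{k-1})\norm{\phi_h}{L^2}$ via the $L^\infty$ normal bound.

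\textbf{Step 2: the tangential estimate \eqref{eq:tangential-defect}.} Set $\psi_h=I_h(P_*\phi_h)$ and split
\[
\psi_h=P_*\phi_h-(1-I_h)(P_*\phi_h).
\]
For the main part $\int_\Gamma r_\tau\cdot\psi_h^\ell$, write $\psi_h^\ell=P\phi_h^\ell-((1-I_h)(P_*\phi_h))^\ell$. The first piece contributes $\int_\Gamma Pr_\tau\cdot\phi_h^\ell$, bounded by $\norm{Pr_\tau}{L^2}\norm{\phi_h}{L^2}$. The second piece is bounded by $Ch^2\norm{r_\tau}{L^2}\norm{\phi_h}{H^1}$ using \eqref{eq:P-superapproximation}. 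Finally I would replace $\norm{\phi_h}{H^1}$ by $\norm{\psi_h}{H^1}$. This is legitimate only after reducing to $\phi_h$ with $P_*\phi_h$ comparable to $\psi_h$. I would take $\phi_h=\psi_h$ itself, since $I_h(P_*\psi_h)=\psi_h+O(h^2)$ in $H^1$ by superapproximation, and absorb the difference using \eqref{eq:full-defect}.

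\textbf{Step 3: the consistency error for tangential test functions.} The remaining term $E_h(\psi_h)$ must now be $O(h^k)\norm{\psi_h}{H^1}$. The normal-term errors gain because $\psi_h\cdot n_{h,*}=\psi_h\cdot(n_{h,*}-n_*)+O(h^2)$, which is $O(h^{k-1}\cdot h)$ in $L^2$ by the $L^2$ bounds in Lemma~\ref{lem:normal-estimates} and \eqref{eq:P-superapproximation}. The stiffness error paired with $\psi_h$ gives $h^k\norm{\psi_h}{H^1}$ from $\norm{\cdot}{H^1}$ interpolation errors of $\Id$, and $\tau h^{k-1}$ from $\overline V$, which is $\le h^k$ for small $\tau$.
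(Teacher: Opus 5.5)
\textbf{Gap: the stiffness consistency error.} You flag this yourself as the main obstacle, but you do not close it. You split $\Id+\tau\overline V_{h,*}$ into $\Id$ and $\tau\overline V_{h,*}$ and use only $\overline V_\tau\in H^k(\Gamma)$. That gives a stiffness error $C(h^k+\tau h^{k-1})\norm{\phi_h}{H^1(\Gamma_{h,*})}$, hence $C(h^{k-1}+\tau h^{k-2})\norm{\phi_h}{L^2(\Gamma_{h,*})}$ after \eqref{eq:inverse}.

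The proposition couples $\tau$ and $h$ in no way; the condition $\tau\le C_0h^2$ only enters in Section~\ref{sec:proof}. Your claimed inequality $\tau h^{k-2}\le Ch^{k-1}$ for bounded $\tau$ is false, since it requires $\tau\lesssim h$. The same problem reappears in Step 3, where "$\tau h^{k-1}\le h^k$ for small $\tau$" again needs $\tau\lesssim h$. So as written you prove neither \eqref{eq:full-defect} nor \eqref{eq:tangential-defect} uniformly in $\tau$. Your fallback of elementwise integration by parts is not carried out, since the jump terms are left unestimated. It would in any case need $H^{k+1}$ regularity of the whole stiffness argument, and that is exactly the missing observation.

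\textbf{The missing idea.} Do not split at all. The nodal values of $\Id+\tau\overline V_{h,*}$ are those of $Y(t+\tau)$, so $\Id+\tau\overline V_{h,*}=I_h(\Id+\tau\overline V_\tau)^{-\ell}$. Moreover $(\Id+\tau\overline V_\tau)\circ Y(t)=Y(t+\tau)$ is bounded in $W^{k+2,\infty}(\Gamma^0)$ by \eqref{eq:regularity}. Hence $\Id+\tau\overline V_\tau$ is bounded in $H^{k+1}(\Gamma)$ uniformly in $\tau$. Equivalently, $\tau\overline V_\tau\circ Y(t)=Y(t+\tau)-Y(t)$ is itself bounded in $W^{k+2,\infty}$, so its $H^1$ interpolation error is $O(h^k)$ with no factor $\tau$ needed. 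Then \eqref{eq:lift-stiffness} and \eqref{eq:interpolation} give a stiffness error $Ch^k\norm{\phi_h}{H^1(\Gamma_{h,*})}$. After \eqref{eq:inverse} this is $Ch^{k-1}\norm{\phi_h}{L^2(\Gamma_{h,*})}$ for \eqref{eq:full-defect}. It is used directly with $\psi_h=I_h(P_*\phi_h)$ for \eqref{eq:tangential-defect}. With this fix, your Step 2 (splitting $r_\tau$ into $Pr_\tau$ and a superapproximation remainder $O(h^2)$) is essentially the paper's argument.

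\textbf{Two smaller points.}
\begin{itemize}
\item The mass-term consistency error is already $Ch^k\norm{\phi_h}{L^2(\Gamma_{h,*})}$ for arbitrary $\phi_h$. This follows from H\"older with exponents $(\infty,2,2)$ and $\norm{n_{h,*}-n_*}{L^2(\Gamma_{h,*})}\le Ch^k$. For instance, $\int_{\Gamma_{h,*}}(f\cdot n)^{-\ell}\phi_h\cdot(n_{h,*}-n_*)$ is bounded this way. So the detour in Step 3 through the smallness of $\psi_h\cdot n_{h,*}$ is unnecessary. As written it is also imprecise: the extra factor $h$ in your ``$O(h^{k-1}\cdot h)$'' is not justified.
\item $I_h(P_*\psi_h)=\psi_h$ holds exactly, because $P_*$ is idempotent at the nodes. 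No absorption via \eqref{eq:full-defect} is needed when you replace $\phi_h$ by $\psi_h$.
\end{itemize}
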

\begin{proof}
	We omit the time $t$ for simplicity. For $f\in H^{k}(\Gamma)^3$, we split
	\begin{equation*}
		\begin{aligned}
			&\int_{\Gamma_{h,*}}\Pi_{h,*}(I_hf^{-\ell}\cdot n_{h,*})\,(\phi_h\cdot n_{h,*}) -\int_\Gamma(f\cdot n)(\phi_h^\ell\cdot n) \\
			&= \int_{\Gamma_{h,*}}\Pi_{h,*}\bigl(I_hf^{-\ell}\cdot n_{h,*}-(f\cdot n)^{-\ell}\bigr)(\phi_h\cdot n_{h,*}) +\int_{\Gamma_{h,*}}(\Pi_{h,*}-1)(f\cdot n)^{-\ell}\,(\phi_h\cdot n_{h,*}) \\
			&\qquad +\int_{\Gamma_{h,*}}(f\cdot n)^{-\ell}\,\phi_h\cdot(n_{h,*}-n_*) +\Bigl(\int_{\Gamma_{h,*}}(f\cdot n)^{-\ell}(\phi_h\cdot n_*) -\int_\Gamma(f\cdot n)(\phi_h^\ell\cdot n)\Bigr).
		\end{aligned}
	\end{equation*}
	We bound the four terms on the right-hand side by $Ch^k\norm{f}{H^k(\Gamma)}\norm{\phi_h}{L^2(\Gamma_{h,*})}$: the first by the $L^2$ stability of $\Pi_{h,*}$, \eqref{eq:interpolation}, $\norm{n_{h,*}-n_*}{L^2(\Gamma_{h,*})}\le Ch^k$ from \eqref{eq:normal-estimates} and $H^2(\Gamma)\subset L^\infty(\Gamma)$; the second by the best approximation property of $\Pi_{h,*}$ and \eqref{eq:interpolation}; the third again by $\norm{n_{h,*}-n_*}{L^2(\Gamma_{h,*})}\le Ch^k$; and the last by \eqref{eq:lift-mass}. Hence
	\begin{equation*}
		\left|\int_{\Gamma_{h,*}}\Pi_{h,*}(I_hf^{-\ell}\cdot n_{h,*})\,(\phi_h\cdot n_{h,*}) -\int_\Gamma(f\cdot n)(\phi_h^\ell\cdot n)\right| \le Ch^k\norm{f}{H^{k}(\Gamma)}\norm{\phi_h}{L^2(\Gamma_{h,*})}.
	\end{equation*}
	For $f\in H^{k+1}(\Gamma)^3$, \eqref{eq:lift-stiffness} for the geometric error and \eqref{eq:lift-norm}, \eqref{eq:interpolation} for $\norm{(I_hf^{-\ell})^\ell-f}{H^1(\Gamma)}$ give
	\begin{equation*}
		\left|\int_{\Gamma_{h,*}}\nabla_{\Gamma_{h,*}}I_hf^{-\ell}\cdot\nabla_{\Gamma_{h,*}}\phi_h -\int_\Gamma\nabla_\Gamma f\cdot\nabla_\Gamma\phi_h^\ell\right| \le Ch^k\norm{f}{H^{k+1}(\Gamma)}\norm{\phi_h}{H^1(\Gamma_{h,*})}.
	\end{equation*}
	Since $\overline V_\tau\circ Y(t)=\int_0^1Y_t(t+s\tau)\dd s$, \eqref{eq:regularity} gives $\norm{\overline V_\tau\circ Y(t)}{W^{k,\infty}(\Gamma^0)}\le M$, so $\overline V_\tau$ is bounded in $H^{k}(\Gamma)$ uniformly in $\tau$. Moreover, $(\Id+\tau\overline V_\tau)\circ Y(t)=Y(t+\tau)$ is bounded in $W^{k+2,\infty}(\Gamma^0)$, so $\Id+\tau\overline V_\tau$ is bounded in $H^{k+1}(\Gamma)$. Applying the first estimate to $f=\overline V_\tau$ and the second to $f=\Id+\tau\overline V_\tau$ gives
	\begin{equation}\label{eq:spatial-defect}
		\left|d_h[Y](\phi_h)-\int_\Gamma r_\tau\cdot\phi_h^\ell\right| \le Ch^k\norm{\phi_h}{H^1(\Gamma_{h,*})}.
	\end{equation}
	The inverse estimate \eqref{eq:inverse} proves \eqref{eq:full-defect}.

	Since $P^2=P$, we have $I_h(P_*\phi_h)=I_h\bigl(P_*I_h(P_*\phi_h)\bigr)$, and therefore $ (I-P_*)I_h(P_*\phi_h) =-(1-I_h)\bigl(P_*I_h(P_*\phi_h)\bigr)$. Hence \eqref{eq:lift-norm} and superapproximation \eqref{eq:P-superapproximation} give
	\begin{equation}\label{eq:superapproximation-defect}
		\norm{(I-P)\bigl(I_h(P_*\phi_h)\bigr)^\ell}{L^2(\Gamma)}\le Ch^2\norm{I_h(P_*\phi_h)}{H^1(\Gamma_{h,*})}.
	\end{equation}
	Since $r_\tau=Pr_\tau+(r_\tau\cdot n)n$ and $n\cdot P=0$, \eqref{eq:superapproximation-defect} and the norm equivalence \eqref{eq:lift-norm} give
	\begin{equation*}
		\begin{aligned}
			&\left|\int_\Gamma r_\tau\cdot\bigl(I_h(P_*\phi_h)\bigr)^\ell\right| \\
			& \leq \left|\int_\Gamma Pr_\tau\cdot\bigl(I_h(P_*\phi_h)\bigr)^\ell \right| + \left|\int_\Gamma(r_\tau\cdot n)\,n\cdot(I-P)\bigl(I_h(P_*\phi_h)\bigr)^\ell \right| \\
			&\le C\norm{Pr_\tau}{L^2(\Gamma)}\norm{I_h(P_*\phi_h)}{L^2(\Gamma_{h,*})} +Ch^2\norm{r_\tau}{L^2(\Gamma)}\norm{I_h(P_*\phi_h)}{H^1(\Gamma_{h,*})}.
		\end{aligned}
	\end{equation*}
	Combining this with \eqref{eq:spatial-defect} for $I_h(P_*\phi_h)$ in place of $\phi_h$ proves \eqref{eq:tangential-defect}.
\end{proof}

\begin{lemma}[Comparison of the approximation flows]\label{lem:flow-comparison}
	If $Y^{[0]}$ and $Y^{[1]}$ satisfy Assumption~\ref{ass:regularity}, then
	\begin{equation}\label{eq:flow-comparison}
		\sup_{0\le t\le T}\norm{Y^{[1]}(t)-Y^{[0]}(t)}{H^1(\Gamma^0)}\le C\tau.
	\end{equation}
\end{lemma}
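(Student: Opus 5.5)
The estimate is a stability statement for a parabolic system on $\Gamma^0$. The flow $Y^{[1]}$ solves the same equation as $Y^{[0]}$ up to the perturbation $\tau V_1[Y^{[1]}]\circ Y^{[1]}$, which is bounded by Lemma~\ref{lem:field-bounds}. The difficulty is that $V_0[Y]=-H[Y]n[Y]+W_0[Y]$ is a degenerate-parabolic, DeTurck-type operator. A naive Gronwall argument in $H^1$ would lose derivatives, since $V_0[Y]\circ Y$ depends on second derivatives of $Y$. So I would not try to close an energy estimate for the difference directly. Instead I would use the regularity already granted by Assumption~\ref{ass:regularity} for both flows, together with the structure $P\lap V_0=0$ and $V_0\cdot n=-H$, to write the difference equation as a linear parabolic equation with smooth coefficients plus a lower-order remainder.

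\textbf{Step 1.} Set $E(t)=Y^{[1]}(t)-Y^{[0]}(t)$ on $\Gamma^0$, with $E(0)=0$. Then
\[
E_t = \bigl(V_0[Y^{[1]}]\circ Y^{[1]}-V_0[Y^{[0]}]\circ Y^{[0]}\bigr)+\tau V_1[Y^{[1]}]\circ Y^{[1]}.
\]
The last term is bounded in $H^{k-2}(\Gamma^0)\subset H^1$ by $C\tau$, by Lemma~\ref{lem:field-bounds}.

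\textbf{Step 2.} Linearize the first term along the segment $Y^\theta=Y^{[0]}+\theta E$, $\theta\in[0,1]$. For $\norm{E}{W^{1,\infty}}$ small, $Y^\theta$ stays a regular embedding with uniformly bounded $W^{k+2,\infty}$ norm. This can be arranged by a continuity argument, or by interpolating the bound in Step 4 with the uniform $W^{k+2,\infty}$ bounds. The difference is then $\int_0^1 DV_0[Y^\theta]E\,\mathrm d\theta$. Here $DV_0$ is obtained by differentiating the defining relations $V_0\cdot n=-H$ and $P\lap V_0=0$.
\begin{itemize}
\item The normal part of the derivative gives $(DV_0 E)\cdot n=\lap(E\cdot n)+\text{l.o.t.}$, using the linearization of $H$.
\item The tangential part is determined by the elliptic problem of Lemma~\ref{lem:tangent-elliptic}. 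Its data are first-order expressions in $E$, so it is controlled in $H^1$ by $\norm{E}{H^1}$ plus the normal component in $H^2$.
\end{itemize}
The principal part is therefore a Laplace--Beltrami operator acting on the normal component, coupled to a zeroth-order elliptic solution operator for the tangential part.

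\textbf{Step 3.} Derive an energy estimate. Test the difference equation with $-\Delta_{\Gamma^0}E$ in a suitable weighted form, or equivalently estimate $\frac{\mathrm d}{\mathrm dt}\norm{E}{H^1}^2$. The normal Laplacian yields a dissipative term $-c\norm{E\cdot n}{H^2}^2$. The tangential part contributes at most $C\norm{E}{H^1}(\norm{E}{H^1}+\norm{E\cdot n}{H^2})$. Young's inequality absorbs the $H^2$ piece, and Gronwall together with $E(0)=0$ gives $\norm{E(t)}{H^1}\le C\tau$.

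\textbf{Main obstacle.} The hard step is Step 2/3: justifying that the tangential component of $DV_0$ does not lose derivatives. The tangential part of $E$ is not damped by the flow, so the tangential-elliptic gauge from Lemma~\ref{lem:tangent-elliptic} must convert a source of the form $P\lap(\cdot)$ applied to the normal variation into an $H^1$-bounded tangential correction.

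\textbf{Fallback.} If this is too delicate, I would exploit the assumed higher regularity of both flows. Any loss of one or two derivatives is harmless, because $\norm{E}{H^3}$ is bounded a priori by $2M$. An estimate of the form $\frac{\mathrm d}{\mathrm dt}\norm{E}{H^1}\le C\norm{E}{H^1}+C\norm{E}{H^3}^{\theta}\norm{E}{H^1}^{1-\theta}\cdots$ would not suffice, though, since it loses powers of $\tau$. So the genuine parabolic structure of Step 3 is needed to obtain the clean order $\tau$.
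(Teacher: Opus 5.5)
Your route is genuinely different from the paper's and can be made to work. However, the decisive claim of Step~2 is only asserted, and the reason you give for it is not correct.

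\textbf{The gap in Step~2.} Differentiate $V_0\cdot n=-H$ and $P\lap V_0=0$ along $Y^\theta$, i.e.\ use \eqref{eq:dV0-identities} with $V$ replaced by $E$. For $u=DV_0[Y^\theta]E$ and $w=E\cdot n$ this gives
\[
f:=u\cdot n=\lap w+|A|^2w-E\cdot\gr H+V_0\cdot(\gr E)n,\qquad P\lap u=-(\lap V_0\cdot n)(\gr E)n-P\dv\bigl(\mathcal E_Y(E)\gr V_0\bigr)=:g .
\]
Neither datum is first order in $E$: $f$ contains $\lap w$, and $g$ is only in $H^{-1}$.

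Lemma~\ref{lem:tangent-elliptic} cannot be used here. It is stated for $j\ge2$, and even an $H^1$ version bounds the full $u$, whose normal part is $f$ itself. That would require $f\in H^1$, i.e.\ $w\in H^3$, which your dissipation does not provide.

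The missing observation is that $P\lap(fn)=2A\gr f+f\,P\lap n$ is first order in $f$, because $P$ kills $(\lap f)n$. Hence $Pu$ solves $P\lap(Pu)=g-P\lap(fn)$ with right-hand side in $H^{-1}$. Test with the tangential field $Pu$ and integrate by parts once; this gives $\norm{\gr(Pu)}{L^2}^2\le C\bigl(\norm{E}{H^1}+\norm{f}{L^2}\bigr)\norm{Pu}{H^1}$. The tangential Poincar\'e inequality then yields $\norm{Pu}{H^1}\le C\bigl(\norm{E}{H^1}+\norm{w}{H^2}\bigr)$.

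This is the continuous counterpart of \eqref{eq:tangential-estimate}: the tangential velocity error is controlled by the normal velocity error in $L^2$. Your ``fallback'' is, as you note yourself, not an alternative, so the whole argument rests on this point.

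\textbf{Closing Step~3.} With this bound, Step~3 closes once you handle the mismatch between an energy on $\Gamma^0$ and the dissipative operator $\Delta_{\Gamma[Y^\theta]}$. Two ingredients do this, both uniformly in $\theta$:
\begin{itemize}
\item Use $\Delta_{\Gamma^0}E\cdot n=\Delta_{\Gamma^0}w+O(|\nabla_{\Gamma^0}E|+|E|)$. This also takes care of the lower-order part of $f$, which contains derivatives of the tangential part of $E$.
\item Use a G{\aa}rding inequality $(\Delta_{\Gamma^0}w,\Delta_{\Gamma[Y^\theta]}w)_{L^2(\Gamma^0)}\ge c\norm{w}{H^2}^2-C\norm{w}{H^1}^2$.
\end{itemize}
Together these give $\frac{\mathrm d}{\mathrm dt}\norm{E}{H^1}^2\le C\norm{E}{H^1}^2+C\tau^2$, and Gronwall with $E(0)=0$ gives the claim. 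Regularity of the segment $Y^\theta$ is maintained by a bootstrap using $\norm{\nabla_{\Gamma^0}E}{L^\infty}\le C\norm{\nabla_{\Gamma^0}E}{L^2}^{1/2}\norm{\nabla_{\Gamma^0}E}{H^2}^{1/2}$ and the a priori bounds.

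\textbf{Comparison with the paper.} The paper writes out no proof. It defers to \cite[Lemma~4.12]{Li} and says the argument parallels Section~\ref{sec:proof}. That template compares the two nonlinear equations directly, with an energy of the type $A^N(E,E)+\norm{E}{L^2}^2+\norm{PE}{H^1}^2$. It tests with the velocity error and with its tangential part, and handles the surface mismatch through the Bai--Li decomposition and the identity of Lemma~\ref{lem:tangential-identity}.

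Your linearization replaces this two-surface bookkeeping by exact derivative formulas; the commutator identity \eqref{eq:PLap-commutator} plays the role of the tangential identity. This is cleaner in the continuous setting. It costs differentiability of $Y\mapsto V_0[Y]\circ Y$, the bootstrap along the segment, and the G{\aa}rding step. The paper's route instead reuses one template for both the continuous and the discrete comparison.
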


The estimate \eqref{eq:flow-comparison} follows from the argument of \cite[Lemma~4.12]{Li} for curves; we omit the proof here. The proof is also parallel to the convergence proof of the discrete problem, which is carried out in detail in Section~\ref{sec:proof}.

\section{Proof of the main theorem}\label{sec:proof}

Throughout this section, $k\ge6$, $c_0h^2\le\tau\le C_0h^2$, the approximation flows $Y^{[0]}$ and $Y^{[1]}$ satisfy Assumption~\ref{ass:regularity}, and $Y=Y^{[1]}$. $\Gamma=\Gamma[Y(t)]$ is a smooth surface with $n$, $H$, $A$, $P$, $\Gamma_{h,*}=Y_{h,*}(t)(\Gamma_h^0)$ is its interpolated surface, and $X_h$, $\Gamma_h$, $e_x$, $n_h$, $\overline n_h$, $\Pi_h$ are as in Section~\ref{subsec:normals}. Moreover, $\overline V_\tau=\overline V_\tau[Y](t)$, and $\overline V_{h,*}=\overline V_{h,*}(t)=I_h\overline V_\tau^{-\ell}$ is its interpolation. If no confusion arises, for $u_h \in S_h(\Gamma_h)$ we also write its lift $\widehat{u}_h \in S_h(\Gamma_{h,*})$ simply as $u_h$, omitting the hat.

Following \cite{BaiLi}, for a surface $\Gamma$ with unit normal $n_\Gamma$,
\begin{equation}\label{eq:BL-forms}
	\begin{aligned}
		A_\Gamma(u,w)&=\int_\Gamma\nabla_\Gamma u\cdot \nabla_\Gamma w, \\
		A_\Gamma^N(u,w)&=\int_\Gamma[(\nabla_\Gamma u)n_\Gamma] \cdot[(\nabla_\Gamma w)n_\Gamma], \\
		A_\Gamma^T(u,w)&=\int_\Gamma \tr[(\nabla_\Gamma u)(I - n_\Gamma \otimes n_\Gamma)(\nabla_\Gamma w)^T], \\
		B_\Gamma(u,w)&=\int_\Gamma\bigl[(\nabla_\Gamma\cdot u)(\nabla_\Gamma\cdot w) -\tr\bigl((\nabla_\Gamma u)(\nabla_\Gamma w)\bigr)\bigr].
	\end{aligned}
\end{equation}
It holds that $A_\Gamma^T(u,w)=A_\Gamma(u,w)-A_\Gamma^N(u,w)$. We write $A_{h,*}$, $A_{h,*}^N$, $A_{h,*}^T$ for the forms on $\Gamma_{h,*}$, and $B(u_h,w_h)=B_{\Gamma}(u_h^\ell,w_h^\ell)$. Furthermore, we denote by $J$ the perturbation of the mass term,
\begin{equation}\label{eq:J-definition}
	J(\phi_h) =\int_{\Gamma_h}\Pi_h(\overline V_{h,*}\cdot n_h)\,(\phi_h\cdot n_h) -\int_{\Gamma_{h,*}}\Pi_{h,*}(\overline V_{h,*}\cdot n_{h,*})\,(\phi_h\cdot n_{h,*}).
\end{equation}
If $\norm{\nabla_{\Gamma_{h,*}}e_x}{L^\infty(\Gamma_{h,*})}\le\frac12$, then from the argument of \cite[(5.17)--(5.21)]{HLT}
\begin{equation}\label{eq:J-bound}
	|J(\phi_h)|\le C\norm{e_x}{H^1(\Gamma_{h,*})}\norm{\phi_h}{L^2(\Gamma_{h,*})}.
\end{equation}
See also \eqref{eq:J-leading} in Appendix~\ref{app:tangential-identity} for a direct proof.

\subsection{An algebraic identity for tangential test functions}\label{subsec:tangential-identity}

On a smooth surface, $\int_\Gamma \nabla_{\Gamma}\Id \cdot \nabla_\Gamma \phi=\int_\Gamma Hn\cdot \phi$ for every $\phi$. Its derivative along any perturbation $e$ gives an algebraic identity for test functions, i.e.,
\begin{equation*}
	\begin{aligned}
		A_\Gamma^N(e,\phi)+B_\Gamma(e,\phi) &=\int_\Gamma\nabla_\Gamma(e\cdot n)\cdot\nabla_\Gamma(\phi\cdot n)-\int_\Gamma H\,[(\nabla_\Gamma e)n]\cdot\phi \\
		&\quad +\int_\Gamma\left(-|A|^2(e\cdot n)+e\cdot\nabla_\Gamma H+H\nabla_\Gamma\cdot e\right)(\phi\cdot n).
	\end{aligned}
\end{equation*}
The following lemma is a discrete version along the perturbation $e_x$. We first consider a general test function. The proof is given in Appendix~\ref{app:tangential-identity}.
\begin{lemma}\label{lem:tangential-identity}
	Assume that $\norm{e_x}{H^1(\Gamma_{h,*})}\le h^{7/2}$. Then for every $\phi_h\in S_h(\Gamma_h)^3$,
	\begin{equation}\label{eq:T-T}
		\begin{aligned}
			&\left|J(\phi_h)+A_{h,*}^N(e_x,\phi_h)+B(e_x,\phi_h)-\int_{\Gamma}\nabla_{\Gamma}\bigl(e_x^\ell\cdot n\bigr)\cdot\nabla_{\Gamma}(\phi_h^\ell\cdot n)\right| \\
			&\quad \le C\norm{e_x}{H^1(\Gamma_{h,*})}\left(\norm{\phi_h^\ell\cdot n}{L^2(\Gamma)} +(\tau+h^{5/2} + h^{k-1})\norm{\phi_h}{H^1(\Gamma_{h,*})}\right).
		\end{aligned}
	\end{equation}
\end{lemma}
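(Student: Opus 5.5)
The plan is to linearize the mass perturbation $J(\phi_h)$ in $e_x$, compare the linearization with the continuous first-variation identity stated just before Lemma~\ref{lem:tangential-identity}, and show that everything else is quadratic in $e_x$ or a geometric/time-discretization error.

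\textbf{Step 1: expansion of $J$.} Consider the interpolating family $\Gamma_h^\theta=(X_{h,*}+\theta e_x)(\Gamma_h^0)$, $\theta\in[0,1]$, and write
\begin{equation*}
J(\phi_h)=\int_0^1\frac{\dd}{\dd\theta}\int_{\Gamma_h^\theta}\Pi_h^\theta(\overline V_{h,*}\cdot n_h^\theta)\,(\phi_h\cdot n_h^\theta)\dd\theta .
\end{equation*}
The $\theta$-derivative involves three ingredients.
\begin{itemize}
\item The area-element variation gives $\nabla_{\Gamma_h^\theta}\cdot e_x$.
\item The normal variation gives $-(\nabla_{\Gamma_h^\theta}e_x)^Tn_h^\theta$.
\item The variation of the $L^2$ projection produces a commutator term. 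After testing, it reduces to $\int(\overline V_{h,*}\cdot n-\Pi(\overline V_{h,*}\cdot n))\cdots$ and is $O(h^{k})$ small by the best-approximation property of $\Pi$.
\end{itemize}
The difference between the integrand at $\theta$ and at $\theta=0$ is bounded by $C\norm{\nabla e_x}{L^\infty}\norm{e_x}{H^1}\norm{\phi_h}{L^2}$. By the inverse estimate \eqref{eq:inverse} and the assumption $\norm{e_x}{H^1}\le h^{7/2}$, we have $\norm{\nabla e_x}{L^\infty}\le Ch^{-1}\norm{e_x}{H^1}\le Ch^{5/2}$. This is the source of the $h^{5/2}$ term. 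So $J$ equals its linearization on $\Gamma_{h,*}$ up to admissible errors.

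\textbf{Step 2: passage to the smooth surface.} Lift the linearization to $\Gamma$ using \eqref{eq:lift-mass}--\eqref{eq:lift-stiffness}. Replace $n_{h,*}$ by $n$ and $\Pi_{h,*}(\overline V_{h,*}\cdot n_{h,*})$ by $\overline V_\tau\cdot n$ using Lemma~\ref{lem:normal-estimates} and \eqref{eq:interpolation}. This costs $Ch^{k-1}\norm{e_x}{H^1}\norm{\phi_h}{H^1}$, since one gradient of $e_x$ paired with an $L^\infty$ normal error $h^{k-1}$ appears.

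Next replace $\overline V_\tau\cdot n$ by $-H$. By \eqref{eq:averaged-velocity-Taylor} and $V_0\cdot n=-H$, this costs $C\tau$, which gives the $\tau$ term. The result is the continuous quantity
\begin{equation*}
\int_\Gamma\Bigl(-H\nabla_\Gamma\cdot e\,(\phi\cdot n)+H\,[(\nabla_\Gamma e)^T n]\cdot\phi\cdots\Bigr),
\end{equation*}
with $e=e_x^\ell$ and $\phi=\phi_h^\ell$. The same lifting errors apply to $A^N_{h,*}(e_x,\phi_h)$, with $n_{h,*}$ replaced by $n$ at cost $h^k$.

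\textbf{Step 3: continuous identity.} Insert the continuous first-variation identity displayed before Lemma~\ref{lem:tangential-identity}. Then $J+A^N+B-\int_\Gamma\nabla_\Gamma(e\cdot n)\cdot\nabla_\Gamma(\phi\cdot n)$ reduces to residual terms. Each residual is either of the form $\int f(e,\nabla e)\,(\phi\cdot n)$, bounded by $C\norm{e_x}{H^1}\norm{\phi_h^\ell\cdot n}{L^2}$, or it contains $H[(\nabla_\Gamma e)n]\cdot\phi$. The latter is matched by the normal-variation term of $J$, up to the transposition $(\nabla e)n$ versus $(\nabla e)^Tn$. One has to check that this mismatch is absorbed: the variation of $\phi\cdot n$ contributes $\phi\cdot(\nabla e)^Tn$, and the identity itself comes from differentiating $n$ consistently.

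\textbf{Main obstacle.} I expect the main difficulty to be exactly this bookkeeping. Every term that pairs $\nabla e$ with the tangential part $P\phi$ must cancel exactly against the continuous identity. Such terms have no small factor, so any leftover would destroy the estimate. I would first verify the continuous identity's derivation term by term to fix signs and transposes, and then match it against the three variation terms of $J$.
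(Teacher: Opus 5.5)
Your plan is correct and is essentially the paper's proof. The shared ingredients are the homotopy $\Gamma_{h,\theta}=(\Id+\theta e_x)(\Gamma_{h,*})$, the $h^{5/2}$ error from $\norm{\nabla_{\Gamma_{h,*}}e_x}{L^\infty(\Gamma_{h,*})}\le Ch^{-1}\norm{e_x}{H^1(\Gamma_{h,*})}$, $h^{k-1}$ lifting errors, the $\tau$ error from $\norm{\overline V_\tau\cdot n+H}{L^\infty(\Gamma)}\le C\tau$, and the continuous identity, with every term carrying a factor $\phi\cdot n$ merely bounded rather than cancelled. One difference in bookkeeping: the paper bounds the area variation and the full variation of $\Pi_{h,\theta}(\overline V_{h,*}\cdot n_{h,\theta})$ directly for each $\theta$ via $\norm{n_{h,\theta}-n_*}{L^\infty(\Gamma_{h,*})}\le Ch^{5/2}$, and compares with $\theta=0$ only the term coming from the variation of $\phi_h\cdot n_{h,\theta}$, which spares you second $\theta$-derivatives of $\Pi_{h,\theta}$. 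The transposition mismatch you flag is only a notational artifact: in the paper's convention $\partial_\theta^\bullet n_{h,\theta}=-(\nabla_{\Gamma_{h,\theta}}e_x)n_{h,\theta}$, and the term $-\int_\Gamma H[(\nabla_\Gamma e)n]\cdot\phi$ of the identity is exactly $\int_\Gamma H\,\partial_\theta^\bullet n\cdot\phi$. It therefore cancels the leading part $-\int_\Gamma(\overline V_\tau\cdot n)[(\nabla_\Gamma e_x^\ell)n]\cdot\phi_h^\ell$ of $J$ with no leftover beyond the $O(\tau)$ term.
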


The following corollary restricts Lemma~\ref{lem:tangential-identity} to tangential test functions; it is the analogue of the curve scenario \cite[Lemma~3.6]{Li}.
\begin{corollary}\label{cor:tangential-identity}
	Under the assumptions of Lemma~\ref{lem:tangential-identity}, for every $\phi_h\in S_h(\Gamma_h)^3$,
	\begin{equation}\label{eq:T-T-tangential}
		\begin{aligned}
			&\left|J(I_h(\overline T_{h,*}\phi_h))+A_{h,*}^N(e_x,I_h(\overline T_{h,*}\phi_h))+B(e_x,I_h(\overline T_{h,*}\phi_h))\right| \\
			&\le C(\tau+h^2)\norm{e_x}{H^1(\Gamma_{h,*})}\norm{I_h(\overline T_{h,*}\phi_h)}{H^1(\Gamma_{h,*})} +Ch^2D_h\norm{I_h(\overline T_{h,*}\phi_h)}{H^1(\Gamma_{h,*})},
		\end{aligned}
	\end{equation}
	where
	\begin{equation}\label{eq:def-of-D}
		D_h:=\sup_{0\neq\chi_h\in S_h(\Gamma_{h,*})} \frac{1}{\norm{\chi_h^\ell}{L^2(\Gamma)}}\int_{\Gamma}\nabla_{\Gamma}(e_x^\ell\cdot n)\cdot\nabla_{\Gamma}\chi_h^\ell .
	\end{equation}
\end{corollary}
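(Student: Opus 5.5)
We apply Lemma~\ref{lem:tangential-identity} with the test function $\psi_h:=I_h(\overline T_{h,*}\phi_h)$. The work then reduces to bounding the two terms on the right of \eqref{eq:T-T} and the gradient term on the left. Throughout we write $\psi_h$ for this test function.

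Lemma~\ref{lem:tangential-identity} gives
\begin{equation*}
	\begin{aligned}
		&\bigl|J(\psi_h)+A_{h,*}^N(e_x,\psi_h)+B(e_x,\psi_h)\bigr|\\
		&\quad\le \Bigl|\int_\Gamma\nabla_\Gamma(e_x^\ell\cdot n)\cdot\nabla_\Gamma(\psi_h^\ell\cdot n)\Bigr| +C\norm{e_x}{H^1(\Gamma_{h,*})}\Bigl(\norm{\psi_h^\ell\cdot n}{L^2(\Gamma)}+(\tau+h^{5/2}+h^{k-1})\norm{\psi_h}{H^1(\Gamma_{h,*})}\Bigr).
	\end{aligned}
\end{equation*}
Since $k\ge6$, we have $h^{5/2}+h^{k-1}\le h^2$, so the last contribution is already of the form $C(\tau+h^2)\norm{e_x}{H^1}\norm{\psi_h}{H^1}$.

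The key step is a normal-component estimate: the normal part of $\psi_h$ should be $O(h^2)$ in $L^2$. We aim to show
\begin{equation*}
	\norm{\psi_h^\ell\cdot n}{L^2(\Gamma)}\le Ch^2\norm{\psi_h}{H^1(\Gamma_{h,*})},
\end{equation*}
and to represent $\psi_h\cdot n_*$ by a finite element function $\chi_h$ whose $L^2$ norm is $O(h^2)\norm{\psi_h}{H^1}$. We proceed as in the proof of \eqref{eq:superapproximation-defect}. Because $\overline T_{h,*}$ is a projection, $\psi_h=I_h(\overline T_{h,*}\psi_h)$, and hence
\begin{equation*}
	\overline N_{h,*}\psi_h=-(1-I_h)(\overline T_{h,*}\psi_h)=(1-I_h)(\overline N_{h,*}\psi_h).
\end{equation*}
Lemma~\ref{lem:superapproximation-II} then gives $\norm{\overline N_{h,*}\psi_h}{L^2}\le Ch^2\norm{\psi_h}{H^1}$. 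Writing $n_*=\overline n_{h,*}/|\overline n_{h,*}|+O(h^{k-1})$ in $L^\infty$, which follows from Lemma~\ref{lem:normal-estimates}, we get
\begin{equation*}
	\norm{\psi_h\cdot n_*}{L^2}\le Ch^2\norm{\psi_h}{H^1}+Ch^{k-1}\norm{\psi_h}{L^2}.
\end{equation*}
Together with \eqref{eq:lift-norm}, this handles the term $\norm{\psi_h^\ell\cdot n}{L^2(\Gamma)}$.

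The gradient term needs $D_h$, which is defined only for finite element test functions. We therefore choose $\chi_h:=I_h(\psi_h\cdot n_*)\in S_h(\Gamma_{h,*})$ and split
\begin{equation*}
	\int_\Gamma\nabla_\Gamma(e_x^\ell\cdot n)\cdot\nabla_\Gamma(\psi_h^\ell\cdot n) =\int_\Gamma\nabla_\Gamma(e_x^\ell\cdot n)\cdot\nabla_\Gamma\chi_h^\ell +\int_\Gamma\nabla_\Gamma(e_x^\ell\cdot n)\cdot\nabla_\Gamma\bigl((1-I_h)(\psi_h\cdot n_*)\bigr)^\ell .
\end{equation*}
The first piece is bounded by $D_h\norm{\chi_h^\ell}{L^2}$. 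By the normal-component estimate and the stability of $I_h$ on such products, $\norm{\chi_h}{L^2}\le Ch^2\norm{\psi_h}{H^1}$ up to $h^{k-1}$ corrections, so this piece is at most $Ch^2D_h\norm{\psi_h}{H^1}$.

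For the second piece we use \eqref{eq:smooth-superapproximation} with $f=n$, applied componentwise. It gives $\norm{(1-I_h)(\psi_h\cdot n_*)}{H^1}\le Ch\norm{\psi_h}{H^1}$, so the piece is bounded only by $Ch\norm{e_x}{H^1}\norm{\psi_h}{H^1}$. This is one power of $h$ short.

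The main obstacle is recovering that missing power of $h$. I would integrate by parts onto $e_x^\ell\cdot n$ only after exploiting the $L^2$ bound $Ch^2\norm{\psi_h}{H^1}$ on $(1-I_h)(\psi_h\cdot n_*)$. One route is to replace $e_x^\ell\cdot n$ by a finite element function, e.g. $I_h(e_x\cdot n_*)$, whose Laplacian-type functional is controlled through the definition of $D_h$. Alternatively, one can absorb this term into $D_h$: combine it with the first piece and write $\psi_h^\ell\cdot n$ directly as $\chi_h^\ell$ plus a remainder that is small in $L^2$, then use an inverse estimate \eqref{eq:inverse} only on finite element differences. If neither works, the fallback is to accept the remainder in the form $h^2\norm{e_x}{H^1}\norm{\psi_h}{H^1}$. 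This would be obtained via superapproximation at second order, treating $n_*$ as smooth so that $\nabla_\Gamma$ of the interpolation error pairs with a smooth factor after integration by parts. Collecting the three contributions yields \eqref{eq:T-T-tangential}.
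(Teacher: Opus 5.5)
Most of your argument is sound, but it leaves the central term open.

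\textbf{What works.} The reduction via Lemma~\ref{lem:tangential-identity} and the absorption $h^{5/2}+h^{k-1}\le h^2$ are fine. So is the estimate $\norm{\psi_h\cdot n_*}{L^2(\Gamma_{h,*})}\le Ch^2\norm{\psi_h}{H^1(\Gamma_{h,*})}$. Your route through $\overline N_{h,*}\psi_h=(1-I_h)(\overline N_{h,*}\psi_h)$ and \eqref{eq:superapproximation-II} is a valid alternative to the paper's use of $P_*$-superapproximation and $\norm{\overline T_{h,*}-P_*}{L^\infty(\Gamma_{h,*})}\le Ch^{k-1}$. The bound $Ch^2D_h\norm{\psi_h}{H^1(\Gamma_{h,*})}$ for the $\chi_h$-piece is also correct.

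\textbf{The gap.} The remaining term is $\int_\Gamma\nabla_\Gamma(e_x^\ell\cdot n)\cdot\nabla_\Gamma v$ with $v=((1-I_h)(\psi_h\cdot n_*))^\ell$. This term is the actual content of the corollary, and none of your suggested routes closes it.
\begin{itemize}
\item $D_h$ in \eqref{eq:def-of-D} only admits finite element test functions, so $v$ cannot be absorbed into it. Likewise \eqref{eq:inverse} does not apply to $v$.
\item Integrating by parts onto $e_x^\ell\cdot n$ is not possible globally, since it is only piecewise smooth. Elementwise integration by parts produces element Laplacians and edge terms. By inverse and trace estimates these again give only $Ch\norm{e_x}{H^1(\Gamma_{h,*})}\norm{\psi_h}{H^1(\Gamma_{h,*})}$.
\item The available superapproximation estimate gives only a factor $h$ for $\norm{v}{H^1(\Gamma)}$.
\end{itemize}

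\textbf{The missing idea.} Use Galerkin orthogonality for the $H^1(\Gamma)$ Ritz projection $R_h$ onto $S_h(\Gamma_{h,*})^\ell$. This converts a gradient pairing against a non-discrete function that is small only in $L^2$ into quantities you control. For $u\in H^1(\Gamma)$, split $u=R_hu+(u-R_hu)$. In the second part, split $e_x^\ell\cdot n=(I_h(e_x\cdot n_*))^\ell+((1-I_h)(e_x\cdot n_*))^\ell$.
\begin{itemize}
\item The $R_hu$ part is at most $D_h\norm{R_hu}{L^2(\Gamma)}\le CD_h(\norm{u}{L^2(\Gamma)}+h\norm{u}{H^1(\Gamma)})$.
\item By orthogonality, $\int_\Gamma\nabla_\Gamma(I_h(e_x\cdot n_*))^\ell\cdot\nabla_\Gamma(u-R_hu)=-\int_\Gamma(I_h(e_x\cdot n_*))^\ell(u-R_hu)$. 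This is at most $Ch\norm{e_x}{H^1(\Gamma_{h,*})}\norm{u}{H^1(\Gamma)}$.
\item Superapproximation \eqref{eq:smooth-superapproximation} is applied on the $e_x$ side: $\norm{(1-I_h)(e_x\cdot n_*)}{H^1(\Gamma_{h,*})}\le Ch\norm{e_x}{H^1(\Gamma_{h,*})}$. This is paired with $\norm{u-R_hu}{H^1(\Gamma)}\le C\norm{u}{H^1(\Gamma)}$.
\end{itemize}
Altogether you get $C(D_h+\norm{e_x}{H^1(\Gamma_{h,*})})(\norm{u}{L^2(\Gamma)}+h\norm{u}{H^1(\Gamma)})$. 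The second factor is $O(h^2)\norm{\psi_h}{H^1(\Gamma_{h,*})}$ once you have both $\norm{u}{L^2(\Gamma)}\le Ch^2\norm{\psi_h}{H^1(\Gamma_{h,*})}$ and $\norm{u}{H^1(\Gamma)}\le Ch\norm{\psi_h}{H^1(\Gamma_{h,*})}$.

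For your $v$ both bounds follow from \eqref{eq:smooth-superapproximation}. They also hold for $u=\psi_h^\ell\cdot n$ itself, with the $H^1$ bound being part of \eqref{eq:tangential-smallness}. That is the paper's choice, and it makes your $\chi_h$-splitting unnecessary.

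Your first route, splitting $e_x^\ell\cdot n$ through $I_h(e_x\cdot n_*)$, points in this direction. Without the projection step, however, the discrete part paired with the non-discrete $v$ is still only $O(h)$.
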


\begin{proof}
	Since $P_*n_*=0$ and $I_h(\overline T_{h,*}\phi_h)=I_h(\overline T_{h,*}I_h(\overline T_{h,*}\phi_h))$, we have
	\begin{equation*}
		\begin{aligned}
			I_h(\overline T_{h,*}\phi_h)\cdot n_* &=I_h\bigl(P_*I_h(\overline T_{h,*}\phi_h)\bigr)\cdot n_* +I_h\bigl((\overline T_{h,*}-P_*)I_h(\overline T_{h,*}\phi_h)\bigr)\cdot n_* \\
			&=-\bigl((1-I_h)(P_*I_h(\overline T_{h,*}\phi_h))\bigr)\cdot n_* +I_h\bigl((\overline T_{h,*}-P_*)I_h(\overline T_{h,*}\phi_h)\bigr)\cdot n_* .
		\end{aligned}
	\end{equation*}
	By superapproximation \eqref{eq:P-superapproximation} for the first term, and $\norm{\overline T_{h,*}-P_*}{L^\infty(\Gamma_{h,*})}\le C\norm{\overline n_{h,*}-n_*}{L^\infty(\Gamma_{h,*})}\le Ch^{k-1}$ from Lemma~\ref{lem:normal-estimates} for the second term, we obtain $\norm{I_h(\overline T_{h,*}\phi_h)\cdot n_*}{L^2(\Gamma_{h,*})} + h\norm{I_h(\overline T_{h,*}\phi_h)\cdot n_*}{H^1(\Gamma_{h,*})}\le Ch^2\norm{I_h(\overline T_{h,*}\phi_h)}{H^1(\Gamma_{h,*})}$. Since $\norm{\overline n_{h,*}-n_*}{L^\infty(\Gamma_{h,*})}+\norm{n_h-n_*}{L^\infty(\Gamma_{h,*})}\le Ch^{5/2}$ by Lemma~\ref{lem:normal-estimates}, \eqref{eq:normal-perturbation} and the inverse estimate \eqref{eq:inverse}, the same $L^2$ bound holds with $n_*$ replaced by $\overline n_{h,*}$ or $n_h$. Together with \eqref{eq:lift-norm}, we obtain
	\begin{equation}\label{eq:tangential-smallness}
		\begin{aligned}
			&\norm{(I_h(\overline T_{h,*}\phi_h))^\ell\cdot n}{L^2(\Gamma)} + h\norm{(I_h(\overline T_{h,*}\phi_h))^\ell\cdot n}{H^1(\Gamma)} \\
			&\quad+\norm{I_h(\overline T_{h,*}\phi_h)\cdot\overline n_{h,*}}{L^2(\Gamma_{h,*})}+\norm{I_h(\overline T_{h,*}\phi_h)\cdot n_h}{L^2(\Gamma_{h,*})} \\
			&\le Ch^2\norm{I_h(\overline T_{h,*}\phi_h)}{H^1(\Gamma_{h,*})}.
		\end{aligned}
	\end{equation}

	To bound the integral term in \eqref{eq:T-T}, we consider the Ritz projection $R_h$. For any $u \in H^1(\Gamma)$, its Ritz projection $R_h u$ onto $S_h(\Gamma_{h,*})^\ell$ is the unique solution of
	\begin{equation*}
		\int_\Gamma R_h u\, \chi_h^\ell + \int_\Gamma \nabla_\Gamma R_h u \cdot \nabla_\Gamma \chi_h^\ell = \int_\Gamma u\, \chi_h^\ell + \int_\Gamma \nabla_\Gamma u \cdot \nabla_\Gamma \chi_h^\ell, \quad \forall \chi_h \in S_h(\Gamma_{h,*}).
	\end{equation*}
	The standard Ritz projection estimates (C\'ea's lemma and a duality argument) give
	\begin{equation*}
		\norm{u-R_h u}{H^1(\Gamma)} \le C \norm{u}{H^1(\Gamma)}, \quad \norm{u-R_h u}{L^2(\Gamma)} \le Ch\norm{u}{H^1(\Gamma)},
	\end{equation*}
	and hence $\norm{R_h u}{L^2(\Gamma)} \le C\bigl(\norm{u}{L^2(\Gamma)}+h\norm{u}{H^1(\Gamma)}\bigr)$. Since $e_x^\ell\cdot n=(e_x\cdot n_*)^\ell$, the definition of $R_h$ gives
	\begin{equation*}
		\int_\Gamma\nabla_\Gamma\bigl(I_h(e_x\cdot n_*)\bigr)^\ell\cdot\nabla_\Gamma(u-R_hu) = -\int_\Gamma\bigl(I_h(e_x\cdot n_*)\bigr)^\ell(u-R_hu).
	\end{equation*}
	This, together with superapproximation \eqref{eq:smooth-superapproximation}, gives
	\begin{equation*}
		\begin{aligned}
			&\left|\int_\Gamma\nabla_\Gamma(e_x^\ell\cdot n)\cdot\nabla_\Gamma u\right| \\
			& \leq \left|\int_\Gamma\nabla_\Gamma(e_x^\ell\cdot n)\cdot\nabla_\Gamma R_h u\right| + \left|\int_\Gamma\nabla_\Gamma \bigl((1-I_h)(e_x\cdot n_*)\bigr)^\ell\cdot\nabla_\Gamma (u-R_h u)\right| \\
			&\quad + \left|\int_\Gamma\bigl(I_h(e_x\cdot n_*)\bigr)^\ell(u-R_hu)\right| \\
			& \leq C D_h \left\|R_h u\right\|_{L^2(\Gamma)} + C\norm{\bigl((1-I_h)(e_x\cdot n_*)\bigr)^\ell}{H^1(\Gamma)} \left\|u-R_h u\right\|_{H^1(\Gamma)} \\
			&\quad + C\norm{e_x}{L^2(\Gamma_{h,*})}\norm{u-R_hu}{L^2(\Gamma)} \\
			& \leq C\left(D_h+\norm{e_x}{H^1(\Gamma_{h,*})}\right)\left(\left\|u\right\|_{L^2(\Gamma)}+h\left\|u\right\|_{H^1(\Gamma)}\right).
		\end{aligned}
	\end{equation*}
	Taking $u=(I_h(\overline T_{h,*}\phi_h))^\ell\cdot n$, we obtain from \eqref{eq:tangential-smallness} and \eqref{eq:T-T}
	\begin{equation*}
		\begin{aligned}
			&\left|J(I_h(\overline T_{h,*}\phi_h))+A_{h,*}^N(e_x,I_h(\overline T_{h,*}\phi_h))+B(e_x,I_h(\overline T_{h,*}\phi_h))\right| \\
			&\le C\norm{e_x}{H^1(\Gamma_{h,*})}\Bigl(\norm{(I_h(\overline T_{h,*}\phi_h))^\ell\cdot n}{L^2(\Gamma)}+(\tau+h^{5/2} + h^{k-1})\norm{I_h(\overline T_{h,*}\phi_h)}{H^1(\Gamma_{h,*})}\Bigr) \\
			&\quad+C\bigl(D_h+\norm{e_x}{H^1(\Gamma_{h,*})}\bigr)\Bigl(\norm{(I_h(\overline T_{h,*}\phi_h))^\ell\cdot n}{L^2(\Gamma)}+h\norm{(I_h(\overline T_{h,*}\phi_h))^\ell\cdot n}{H^1(\Gamma)}\Bigr) \\
			&\le C(\tau+h^2)\norm{e_x}{H^1(\Gamma_{h,*})}\norm{I_h(\overline T_{h,*}\phi_h)}{H^1(\Gamma_{h,*})} +Ch^2D_h\norm{I_h(\overline T_{h,*}\phi_h)}{H^1(\Gamma_{h,*})}.
		\end{aligned}
	\end{equation*}
	This proves \eqref{eq:T-T-tangential}.
\end{proof}

\subsection{The error equation and the control of $D_h$}\label{subsec:error-equation}

Let $V_h\in S_h(\Gamma_h)^3$ be the solution of the BGN method \eqref{eq:BGN-eliminated}. For all $\phi_h\in S_h(\Gamma_h)^3$,
\begin{align}
	\int_{\Gamma_h}\Pi_h(V_h\cdot n_h)\,(\phi_h\cdot n_h) +\int_{\Gamma_h}\nabla_{\Gamma_h}(\Id+\tau V_h)\cdot \nabla_{\Gamma_h}\phi_h&=0, \label{eq:BGN-step} \\
	\int_{\Gamma_{h,*}}\Pi_{h,*}(\overline V_{h,*}\cdot n_{h,*})\,(\phi_h\cdot n_{h,*}) +\int_{\Gamma_{h,*}}\nabla_{\Gamma_{h,*}}(\Id+\tau\overline V_{h,*})\cdot \nabla_{\Gamma_{h,*}}\phi_h&=d_h(\phi_h), \label{eq:interpolated-step}
\end{align}
where $d_h=d_h[Y]$. Recall the decomposition of the stiffness term in \cite[(5.10)]{BaiLi}, where the remainder $K$ is defined,
\begin{equation}\label{eq:stiffness-decomposition}
	\begin{aligned}
		&A_{\Gamma_h}(\Id+\tau V_h,\phi_h)-A_{\Gamma_{h,*}}(\Id+\tau\overline V_{h,*},\phi_h) \\
		&\quad=A_{h,*}^N(e_x+\tau e_V,\phi_h)+\tau A_{h,*}^T(e_V,\phi_h) +B(e_x,\phi_h)+K(\phi_h).
	\end{aligned}
\end{equation}
Here $e_V=V_h-\overline V_{h,*}$. Subtracting \eqref{eq:interpolated-step} from \eqref{eq:BGN-step} and using \eqref{eq:J-definition} and \eqref{eq:stiffness-decomposition} gives the error equation
\begin{equation}\label{eq:error-equation}
	\begin{aligned}
		&\int_{\Gamma_h}\Pi_h(e_V\cdot n_h)\,(\phi_h\cdot n_h) +A_{h,*}^N(e_x+\tau e_V,\phi_h)+\tau A_{h,*}^T(e_V,\phi_h) \\
		&\quad=-d_h(\phi_h)-J(\phi_h)-B(e_x,\phi_h)-K(\phi_h).
	\end{aligned}
\end{equation}
If $\norm{e_x}{H^1(\Gamma_{h,*})}\le h^{7/2}$, then $\norm{\nabla_{\Gamma_{h,*}}e_x}{L^\infty(\Gamma_{h,*})}\le Ch^{5/2}\le\frac12$ by \eqref{eq:inverse}, and by \eqref{eq:J-bound} for $J$, by \cite[(5.9)]{BaiLi} for $B$, and by \cite[(5.22)]{BaiLi} for $K$, we have for all $\phi_h\in S_h(\Gamma_h)^3$,
\begin{equation}\label{eq:JKB-bounds}
	\begin{aligned}
		&|J(\phi_h)|+|B(e_x,\phi_h)|\le C\norm{e_x}{H^1(\Gamma_{h,*})}\norm{\phi_h}{L^2(\Gamma_{h,*})}, \\
		&|K(\phi_h)| \le C\norm{e_x}{W^{1, \infty}(\Gamma_{h,*})}\bigl(\norm{e_x}{H^1(\Gamma_{h,*})} +\tau\norm{e_V}{H^1(\Gamma_{h,*})}\bigr) \norm{\phi_h}{H^1(\Gamma_{h,*})} \\
		&\qquad \qquad \qquad + C (\tau + h^{k-1}) \norm{e_x}{H^1(\Gamma_{h,*})}\norm{\phi_h}{H^1(\Gamma_{h,*})}.
	\end{aligned}
\end{equation}
For $d_h(\phi_h)$, denote the normal defect and the tangential defect by $D_1$, $D_2$,
\begin{subequations}
	\begin{align}
		&D_1 = \sup_{0\neq\phi_h\in S_h(\Gamma_{h,*})^3} \frac{|d_h(\phi_h)|}{\norm{\phi_h}{L^2(\Gamma_{h,*})}}, \label{eq:normal-defect-discrete} \\
		&D_2 = \sup_{\phi_h\in S_h(\Gamma_{h,*})^3,\ I_h(P_*\phi_h)\neq0} \frac{|d_h(I_h(P_*\phi_h))|}{\norm{I_h(P_*\phi_h)}{H^1(\Gamma_{h,*})}}, \label{eq:tangential-defect-discrete}
	\end{align}
\end{subequations}
and we set
\begin{equation}\label{eq:delta}
	\delta=D_1+\tau^{-1}D_2 .
\end{equation}

\begin{lemma}\label{lem:D-bound}
	Under the assumptions of Lemma~\ref{lem:tangential-identity},
	\begin{equation}\label{eq:D-bound}
		\begin{aligned}
			D_h&\le C\left(\norm{\Pi_h(e_V\cdot n_h)}{L^2(\Gamma_h)} + \norm{I_h(\overline N_{h,*}e_V)}{L^2(\Gamma_{h,*})} \right) \\
			&\quad + C \left(h\norm{I_h(\overline T_{h,*}e_V)}{H^1(\Gamma_{h,*})} +\norm{e_x}{H^1(\Gamma_{h,*})} +D_1\right).
		\end{aligned}
	\end{equation}
\end{lemma}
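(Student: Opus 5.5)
The plan is to test the error equation \eqref{eq:error-equation} with a normal-type test function built from an arbitrary scalar $\chi_h\in S_h(\Gamma_{h,*})$. The natural choice is $\phi_h=I_h(\chi_h n_*)$, or equivalently $I_h(\chi_h\overline n_{h,*})$. Its normal part reproduces $\chi_h$ up to $O(h^{k-1})$ errors, while its tangential part is small by superapproximation. The quantity to be isolated is $\int_\Gamma\nabla_\Gamma(e_x^\ell\cdot n)\cdot\nabla_\Gamma\chi_h^\ell$. By Lemma~\ref{lem:tangential-identity}, applied with this $\phi_h$, this integral equals $J(\phi_h)+A_{h,*}^N(e_x,\phi_h)+B(e_x,\phi_h)$ up to an error
\[
C\norm{e_x}{H^1(\Gamma_{h,*})}\bigl(\norm{\chi_h}{L^2}+(\tau+h^{5/2}+h^{k-1})\norm{\phi_h}{H^1(\Gamma_{h,*})}\bigr).
\]
By the inverse estimate, $\norm{\phi_h}{H^1}\le Ch^{-1}\norm{\chi_h}{L^2}$. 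Since $\tau=O(h^2)$, this error is therefore bounded by $C\norm{e_x}{H^1}\norm{\chi_h^\ell}{L^2(\Gamma)}$.

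Next I substitute the error equation for $A_{h,*}^N(e_x,\phi_h)+J(\phi_h)+B(e_x,\phi_h)$:
\[
J+A^N(e_x,\phi_h)+B=-d_h(\phi_h)-K(\phi_h)-\int_{\Gamma_h}\Pi_h(e_V\cdot n_h)(\phi_h\cdot n_h)-\tau A_{h,*}^N(e_V,\phi_h)-\tau A_{h,*}^T(e_V,\phi_h).
\]
I then bound each term on the right.

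\begin{itemize}
\item The defect satisfies $|d_h(\phi_h)|\le D_1\norm{\phi_h}{L^2}\le CD_1\norm{\chi_h}{L^2}$.
\item The mass term is at most $C\norm{\Pi_h(e_V\cdot n_h)}{L^2(\Gamma_h)}\norm{\chi_h}{L^2}$.
\item For $K$, use \eqref{eq:JKB-bounds} together with $\norm{e_x}{W^{1,\infty}}\le Ch^{5/2}$ and $\norm{\phi_h}{H^1}\le Ch^{-1}\norm{\chi_h}{L^2}$. This gives $|K|\le C(h^{3/2}+h)\norm{e_x}{H^1}\norm{\chi_h}{L^2}+Ch^{3/2}\tau\norm{e_V}{H^1}\norm{\chi_h}{L^2}$.
\end{itemize}

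The $\tau\norm{e_V}{H^1}$ term has to be absorbed, so I split $e_V=I_h(\overline N_{h,*}e_V)+I_h(\overline T_{h,*}e_V)$. The inverse estimate gives $\tau\norm{I_h(\overline N_{h,*}e_V)}{H^1}\le Ch\norm{I_h(\overline N_{h,*}e_V)}{L^2}$. The tangential piece $\tau h^{3/2}\norm{I_h(\overline T_{h,*}e_V)}{H^1}$ is already much smaller than $h\norm{I_h(\overline T_{h,*}e_V)}{H^1}$.

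The $\tau$-weighted stiffness terms $\tau A_{h,*}(e_V,\phi_h)$ are the main obstacle. I insert the same splitting of $e_V$. For the normal part, $\tau|A_{h,*}(I_h(\overline N_{h,*}e_V),\phi_h)|\le C\tau h^{-2}\norm{I_h(\overline N_{h,*}e_V)}{L^2}\norm{\chi_h}{L^2}$ by two inverse estimates, and $\tau h^{-2}\le C_0$ bounds this by the stated normal term. For the tangential part, Cauchy--Schwarz and one inverse estimate give $\tau\norm{I_h(\overline T_{h,*}e_V)}{H^1}h^{-1}\norm{\chi_h}{L^2}\le Ch\norm{I_h(\overline T_{h,*}e_V)}{H^1}\norm{\chi_h}{L^2}$, which is exactly the $h$-weighted tangential term in \eqref{eq:D-bound}. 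This is where $\tau=O(h^2)$ enters in an essential way.

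Finally, the test function must be admissible and comparable to $\chi_h$. Lemma~\ref{lem:normal-estimates} and \eqref{eq:lift-norm} give $\norm{\phi_h}{L^2}\le C\norm{\chi_h^\ell}{L^2(\Gamma)}$. The replacement of $\phi_h^\ell\cdot n$ by $\chi_h^\ell$ inside the gradient integral must be justified. Writing $\phi_h\cdot n_*=\chi_h-((1-I_h)(\chi_hn_*))\cdot n_*$, the difference is controlled by \eqref{eq:smooth-superapproximation} in $H^1$ by $Ch\norm{\chi_h}{H^1}\le C\norm{\chi_h}{L^2}$. Paired against $\nabla(e_x^\ell\cdot n)$, this gives $C\norm{e_x}{H^1}\norm{\chi_h}{L^2}$. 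Collecting all terms, dividing by $\norm{\chi_h^\ell}{L^2(\Gamma)}$ and taking the supremum yields \eqref{eq:D-bound}.
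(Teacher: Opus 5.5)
Your proposal is correct and follows essentially the same route as the paper. You combine \eqref{eq:T-T} with the error equation, bound $d_h$, $K$ and the $\tau A_{h,*}(e_V,\phi_h)$ terms by inverse estimates and $\tau\le C_0h^2$ after splitting $e_V=I_h(\overline N_{h,*}e_V)+I_h(\overline T_{h,*}e_V)$, and test with $\phi_h=I_h(n_*\chi_h)$, using \eqref{eq:smooth-superapproximation} to replace $\phi_h^\ell\cdot n$ by $\chi_h^\ell$. Your handling of $K$ is more explicit than the paper's. One justification is misattributed without affecting the argument: the bound $\norm{\phi_h}{L^2(\Gamma_{h,*})}\le C\norm{\chi_h}{L^2(\Gamma_{h,*})}$ follows from superapproximation plus the inverse estimate, not from Lemma~\ref{lem:normal-estimates}.
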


\begin{proof}
	By \eqref{eq:T-T}, the error equation \eqref{eq:error-equation}, \eqref{eq:JKB-bounds}, \eqref{eq:normal-defect-discrete}, and \eqref{eq:inverse}, for every $\phi_h\in S_h(\Gamma_h)^3$,
	\begin{equation}\label{eq:normal-test}
		\begin{aligned}
			&\left|\int_{\Gamma}\nabla_{\Gamma}\bigl(e_x^\ell\cdot n\bigr)\cdot\nabla_{\Gamma}(\phi_h^\ell\cdot n)\right| \\
			&\le\left|J(\phi_h)+A_{h,*}^N(e_x,\phi_h)+B(e_x,\phi_h)\right|+C\norm{e_x}{H^1(\Gamma_{h,*})}\norm{\phi_h}{L^2(\Gamma_{h,*})} \\
			&\le C\bigl(\norm{\Pi_h(e_V\cdot n_h)}{L^2(\Gamma_h)}+\tau h^{-1}\norm{e_V}{H^1(\Gamma_{h,*})} +\norm{e_x}{H^1(\Gamma_{h,*})}+D_1\bigr)\norm{\phi_h}{L^2(\Gamma_{h,*})}.
		\end{aligned}
	\end{equation}
	Let $\chi_h\in S_h(\Gamma_{h,*})$ and $\phi_h=I_h(n_*\chi_h)$, so that $\phi_h\cdot n_*- \chi_h= -\bigl((1-I_h)(n_*\chi_h)\bigr)\cdot n_*$. By superapproximation \eqref{eq:smooth-superapproximation} and inverse estimate \eqref{eq:inverse},
	\begin{equation*}
		\begin{aligned}
			&\left|\int_{\Gamma}\nabla_{\Gamma}(e_x^\ell\cdot n)\cdot\nabla_{\Gamma}\chi_h^\ell\right| \\
			&\le\left|\int_{\Gamma}\nabla_{\Gamma}\bigl(e_x^\ell\cdot n\bigr)\cdot\nabla_{\Gamma}(\phi_h^\ell\cdot n)\right| +C\norm{e_x}{H^1(\Gamma_{h,*})}\norm{(1-I_h)(n_*\chi_h)}{H^1(\Gamma_{h,*})} \\
			&\le\left|\int_{\Gamma}\nabla_{\Gamma}\bigl(e_x^\ell\cdot n\bigr)\cdot\nabla_{\Gamma}(\phi_h^\ell\cdot n)\right| +C\norm{e_x}{H^1(\Gamma_{h,*})}\norm{\chi_h}{L^2(\Gamma_{h,*})} .
		\end{aligned}
	\end{equation*}
	Since $\norm{e_V}{H^1(\Gamma_{h,*})} \leq \norm{I_h(\overline T_{h,*}e_V)}{H^1(\Gamma_{h,*})} + \norm{I_h(\overline N_{h,*}e_V)}{H^1(\Gamma_{h,*})}$, $\norm{\phi_h}{L^2(\Gamma_{h,*})}\le C\norm{\chi_h}{L^2(\Gamma_{h,*})}$, $\tau\le C_0h^2$, \eqref{eq:inverse} and \eqref{eq:normal-test} give \eqref{eq:D-bound}.
\end{proof}

\begin{remark}\label{rem:time-step}
	The condition $c_0h^2\le\tau\le C_0h^2$ is natural for the BGN method. When the error equation \eqref{eq:error-equation} is tested with $\psi_h=I_h(\overline T_{h,*}e_V)$, the left-hand-side is bounded below by $c\tau\norm{I_h(\overline T_{h,*}e_V)}{H^1(\Gamma_{h,*})}^2$. By Corollary~\ref{cor:tangential-identity}, the remaining terms are bounded by $C\bigl((\tau+h^2)\norm{e_x}{H^1(\Gamma_{h,*})}+h^2D_h\bigr)\norm{\psi_h}{H^1(\Gamma_{h,*})}$, where the factor $h^2$ comes from superapproximation. Hence
	\begin{equation*}
		\norm{\psi_h}{H^1(\Gamma_{h,*})}\le C\Bigl(1+\frac{h^2}{\tau}\Bigr)\bigl(\norm{e_x}{H^1(\Gamma_{h,*})}+D_h\bigr)+\cdots,
	\end{equation*}
	which is bounded by the error itself only if $h^2\le C\tau$. Conversely, in the proof of Lemma~\ref{lem:D-bound}, the term $A_{h,*}^N(\tau e_V,\phi_h)$ is estimated by the inverse estimate, which produces the factor $\tau h^{-2}$ and requires $\tau\le Ch^2$. 
	
	Moreover, the tangential defect $D_2$ enters through $\delta=D_1+\tau^{-1}D_2$, so that \eqref{eq:effective-defect} gives $\delta\le C(\tau^2+h^{k-1}+\tau^{-1}h^k)$, which is of order $h^4$ for $\tau=O(h^2)$ when $k\ge6$.
\end{remark}

\subsection{Error estimate}\label{subsec:error-estimate}

We now return to the time levels $t_m$. We write $X_{h,*}^m=Y_{h,*}(t_m)=I_hY(t_m)$ and $\Gamma_{h,*}^m=X_{h,*}^m(\Gamma_h^0)$ for the interpolated flow, and $X_h^m$, $\Gamma_h^m$ for the solution of the BGN method. Accordingly, $n_{h,*}^m$, $\overline n_{h,*}^m$, $\overline N_{h,*}^m$, $\overline T_{h,*}^m$ and $n_h^m$, $\Pi_h^m$ denote the quantities of Section~\ref{subsec:normals} on $\Gamma_{h,*}^m$ and $\Gamma_h^m$, respectively. On the smooth surface $\Gamma^m=\Gamma[Y(t_m)]$ with unit normal $n^m$ and tangential projection $P^m$, we write $n_*^m=(n^m)^{-\ell}$ and $P_*^m=I-n_*^m\otimes n_*^m = (P^m)^{-\ell}$. The velocities and the errors are
\begin{gather*}
	V_h^{m+1}=\frac{X_h^{m+1}-X_h^m}\tau,\qquad \overline V_{h,*}^{m+1}=\frac{X_{h,*}^{m+1}-X_{h,*}^m}\tau, \\
	e_x^m=X_h^m-X_{h,*}^m,\qquad e_V^{m+1}=V_h^{m+1}-\overline V_{h,*}^{m+1}.
\end{gather*}
The quantities of Sections~\ref{subsec:tangential-identity}--\ref{subsec:error-equation} at $t=t_m$ with $X_h=X_h^m$ are denoted by $B^m$, $J^m$, $K^m$, $D_h^m$, $d_h^{m+1}$, $D_1^m$, $D_2^m$ and $\delta^m$, and $A_{h,*}$, $A_{h,*}^N$, $A_{h,*}^T$ are the forms on $\Gamma_{h,*}^m$. 

Lemma~\ref{lem:time-defect} and Proposition~\ref{prop:improved-defect} give
\begin{equation}\label{eq:effective-defect}
	D_1^m\le C(\tau^2+h^{k-1})\le Ch^4,\quad D_2^m\le C(\tau^3+\tau^2h^2+h^k)\le Ch^6,\quad \delta^m\le Ch^4.
\end{equation}

We prove the following convergence between $X_h^m$ and the approximation flows.
\begin{lemma}[Error estimate]\label{lem:error-estimate}
	There are $h_0>0$ and $C>0$, independent of $h$, $\tau$ and $m$, such that for $h\le h_0$ it holds
	\begin{equation}\label{eq:error-estimate}
		\max_{t_m\le T}\norm{e_x^m}{H^1(\Gamma_{h,*}^m)}\le Ch^4 .
	\end{equation}
	Consequently,
	\begin{equation}\label{eq:error-estimate-Y0}
		\begin{aligned}
			&\max_{t_m \le T}\Bigl(\norm{(X_h^m)^{\ell_0}-Y^{[0]}(t_m )}{L^2(\Gamma^0)} +h\norm{(X_h^m)^{\ell_0}-Y^{[0]}(t_m )}{H^1(\Gamma^0)}\Bigr) \\
			&\quad\le C(\tau+ h^4+h^{k+1} )\le Ch^2,
		\end{aligned}
	\end{equation}
	where $(\cdot)^{\ell_0}$ denotes the lift from $\Gamma_h^0$ to $\Gamma^0$ by $a^0$.
\end{lemma}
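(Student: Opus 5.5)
We argue by induction on $m$ with a bootstrap hypothesis. Assume $\norm{e_x^j}{H^1(\Gamma_{h,*}^j)}\le h^{7/2}$ for $j\le m$. This hypothesis makes Lemma~\ref{lem:tangential-identity}, Corollary~\ref{cor:tangential-identity}, Lemma~\ref{lem:D-bound} and \eqref{eq:JKB-bounds} available at step $m$, with $\norm{\nabla_{\Gamma_{h,*}}e_x^m}{L^\infty}\le Ch^{5/2}$. The goal is an energy inequality of the form
\begin{equation*}
	\norm{e_x^{m+1}}{H^1}^2\le(1+C\tau)\norm{e_x^m}{H^1}^2+C\tau\delta^2 .
\end{equation*}
Discrete Gronwall then gives $\norm{e_x^m}{H^1}\le C\delta\le Ch^4$ by \eqref{eq:effective-defect}. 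Since $h^4\ll h^{7/2}$, the bootstrap closes for $h\le h_0$. Initially $e_x^0=0$, because $X_h^0=I_hX_0=X_{h,*}^0$. We also need that $\norm{e_x}{H^1}$ on $\Gamma_{h,*}^{m+1}$ and on $\Gamma_{h,*}^m$ differ only by a factor $1+C\tau$, which follows from the smoothness of $Y$ in time.

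For the energy estimate we split $e_V^{m+1}=I_h(\overline N_{h,*}e_V)+I_h(\overline T_{h,*}e_V)$ and test the error equation \eqref{eq:error-equation} twice.

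\textbf{Normal test.} Take $\phi_h=e_x^m+\tau e_V^{m+1}=e_x^{m+1}$ (nodally). The mass term gives $\tau\norm{\Pi_h(e_V\cdot n_h)}{L^2}^2$, up to cross terms with $e_x$. The term $A^N_{h,*}(e_x+\tau e_V,e_x+\tau e_V)$ is combined with the $\tau A^T$ term. The right-hand side is handled as follows:
\begin{itemize}
	\item $d_h$ is bounded by $D_1\norm{\phi_h}{L^2}$;
	\item $J$, $B$, $K$ are bounded by \eqref{eq:JKB-bounds};
	\item $J+A^N+B$ acting on $e_x^m$ is controlled by Lemma~\ref{lem:tangential-identity}, whose remaining integral term is bounded through $D_h$.
\end{itemize}
Since the full $H^1$ norm of $e_x$ is controlled only by the combination $A_{h,*}^N+A_{h,*}^T$, we add the two tests so that the left-hand side becomes a discrete derivative $\frac12(\|\nabla e_x^{m+1}\|^2-\|\nabla e_x^m\|^2)$ plus nonnegative terms. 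We then add an $L^2$ part via $e_x^{m+1}-e_x^m=\tau e_V$.

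\textbf{Tangential test.} Take $\psi_h=I_h(\overline T_{h,*}e_V)$. The left-hand side is at least $c\tau\norm{\psi_h}{H^1}^2$, minus mixed terms. The mixed terms come from the mass term (small by \eqref{eq:tangential-smallness}) and from $A^N(\tau e_V,\psi_h)$. On the right-hand side:
\begin{itemize}
	\item $d_h(\psi_h)$ is bounded by $D_2\norm{\psi_h}{H^1}$, after replacing $\overline T_{h,*}$ by $P_*$ up to $h^{k-1}$;
	\item $J+A^N(e_x,\cdot)+B$ is bounded by Corollary~\ref{cor:tangential-identity};
	\item $K$ is bounded by \eqref{eq:JKB-bounds}.
\end{itemize}
Dividing by $\tau$ and using $h^2\le c_0^{-1}\tau$ gives
\begin{equation*}
	\norm{\psi_h}{H^1}\le C\bigl(\norm{e_x}{H^1}+D_h+\tau^{-1}D_2\bigr)+\text{small}\cdot\norm{e_V}{H^1},
\end{equation*}
as in Remark~\ref{rem:time-step}.

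\textbf{Closing the system.} Insert Lemma~\ref{lem:D-bound} to replace $D_h$. This produces the normal quantities $\norm{\Pi_h(e_V\cdot n_h)}{L^2}$ and $\norm{I_h(\overline N_{h,*}e_V)}{L^2}$, and $h\norm{\psi_h}{H^1}$; the latter is absorbed. The normal parts of $e_V$ are related to $\Pi_h(e_V\cdot n_h)$ via Lemma~\ref{lem:superapproximation-II} and Lemma~\ref{lem:normal-estimates}. The main obstacle is balancing the weights so that the resulting linear system in
\begin{equation*}
	\bigl(\norm{e_x}{H^1},\ \norm{\Pi_h(e_V\cdot n_h)}{L^2},\ \norm{\psi_h}{H^1}\bigr)
\end{equation*}
closes. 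Concretely, the normal-velocity error must be controlled in $L^2$ with a $\tau^{1/2}$ weight from the normal test, and it feeds the tangential bound with an $O(1)$ constant. I would multiply the tangential estimate by a small factor $\varepsilon\tau$, add it to the normal test, and use $\tau\simeq h^2$ so that inverse-estimate factors $\tau h^{-2}$ stay bounded. The $K$ term contributes only $h^{5/2}\tau\norm{e_V}{H^1}$, which is absorbable.

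\textbf{Consequence \eqref{eq:error-estimate-Y0}.} Write
\begin{equation*}
	(X_h^m)^{\ell_0}-Y^{[0]}=(e_x^m)^{\ell_0}+\bigl((I_hY^{[1]})^{\ell_0}-Y^{[1]}\bigr)+\bigl(Y^{[1]}-Y^{[0]}\bigr),
\end{equation*}
and bound the three terms separately:
\begin{itemize}
	\item the first term is $O(h^4)$ in $H^1$ by \eqref{eq:error-estimate};
	\item the interpolation error is $O(h^{k+1})$ in $L^2$ and $O(h^k)$ in $H^1$ by \eqref{eq:interpolation};
	\item the flow difference is $O(\tau)$ in $H^1$ by Lemma~\ref{lem:flow-comparison}.
\end{itemize}
Combining these gives $C(\tau+h^4+h^{k+1})\le Ch^2$, since $\tau\le C_0h^2$ and $k\ge6$.
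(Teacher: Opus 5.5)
Your bootstrap, the Gronwall target and the derivation of \eqref{eq:error-estimate-Y0} match the paper. The energy argument, however, has a genuine gap: nothing in your plan propagates the \emph{tangential} part of $\nabla e_x$ in a Gronwall-compatible way.

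In \eqref{eq:error-equation} the tangential stiffness acts only on $\tau e_V^{m+1}=e_x^{m+1}-e_x^m$, not on $e_x^{m+1}$. With your test $\phi_h=e_x^{m+1}$ you get $A_{h,*}^N(e_x^{m+1},e_x^{m+1})+A_{h,*}^T(e_x^{m+1}-e_x^m,e_x^{m+1})\ge\frac12\bigl[A_{h,*}(e_x^{m+1},e_x^{m+1})-A_{h,*}(e_x^m,e_x^m)\bigr]$. This is a difference \emph{without} the factor $\tau^{-1}$. By contrast, $d_h$, $J$, $B$, $K$ tested with $e_x^{m+1}$ are of size $C(\delta^m+\norm{e_x^m}{H^1})\norm{e_x^{m+1}}{H^1}$ with an $O(1)$ constant. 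Written in the form $\tau^{-1}(\mathcal E^{m+1}-\mathcal E^m)\le C(\mathcal E^m+\mathcal E^{m+1}+(\delta^m)^2)$, the tangential gradient enters your $\mathcal E$ only with weight $\tau$, while the full $\norm{e_x}{H^1}^2$ sits on the right. So the recursion does not close.

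The mass cross term $\int_{\Gamma_h^m}\Pi_h^m(e_V^{m+1}\cdot n_h^m)(e_x^m\cdot n_h^m)$ has the same defect. Against $\tau\norm{\Pi_h^m(e_V^{m+1}\cdot n_h^m)}{L^2}^2$ it costs $\tau^{-1}\norm{e_x^m}{L^2}^2$. The alternative is to turn it into a discrete derivative of $\norm{\Pi_h^m(e_x\cdot n_h^m)}{L^2}^2$, but then you must control how $n_h^m$ and $\Pi_h^m$ change between steps, which you do not address.

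For the same reason, multiplying the tangential estimate by $\varepsilon\tau$ is the wrong weighting. The term $\varepsilon\tau\norm{I_h(\overline T_{h,*}e_V)}{H^1}^2$ is too weak to control the tangential position error.

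The missing idea is that the tangential position error has no restoring force and must be recovered by integrating the tangential velocity error in time. The paper proceeds in four steps:
\begin{enumerate}
\item It tests the normal part with $\phi_h=e_V^{m+1}$. This yields $\frac1{2\tau}\bigl[A^N_{\Gamma_{h,*}^{m+1}}(e_x^{m+1},e_x^{m+1})-A^N_{\Gamma_{h,*}^m}(e_x^m,e_x^m)\bigr]+\frac\tau2\norm{\nabla e_V^{m+1}}{L^2}^2+c\norm{I_h(\overline N_{h,*}^me_V^{m+1})}{L^2}^2$ on the left, and only $\varrho\norm{I_h(\overline T_{h,*}^me_V^{m+1})}{H^1}^2$ on the right.
\item It uses the tangential test, divided by $\tau$ and with $h^2\le c_0^{-1}\tau$, at full $O(1)$ strength; see \eqref{eq:tangential-estimate}.
\item It combines the two estimates with $\varrho$ small.
\item It closes with the energy $\mathcal E^m=A^N_{\Gamma_{h,*}^m}(e_x^m,e_x^m)+\norm{e_x^m}{L^2}^2+\norm{I_h(P_*^me_x^m)}{H^1}^2$, which is equivalent to $\norm{e_x^m}{H^1}^2$. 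Its last part changes by $\tau I_h(P_*^me_V^{m+1})$ plus $O(\tau)$ geometric terms. Hence its difference quotient is bounded by $\varepsilon\norm{I_h(\overline T_{h,*}^me_V^{m+1})}{H^1}^2+C_\varepsilon\bigl(\norm{e_x^m}{H^1}^2+\norm{e_x^{m+1}}{H^1}^2\bigr)$.
\end{enumerate}
Your $O(1)$ tangential bound ``as in Remark~\ref{rem:time-step}'' is exactly the needed ingredient. It must, however, be used unweighted and paired with this tangential-position energy.
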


\begin{proof}
	The proof is by induction. We assume
	\begin{equation}\label{eq:induction}
		\norm{e_x^j}{H^1(\Gamma_{h,*}^j)}\le h^{7/2}, \qquad 0\le j\le m,
	\end{equation}
	so that Lemma~\ref{lem:tangential-identity}, Corollary~\ref{cor:tangential-identity} and Lemma~\ref{lem:D-bound} apply at $t_m$. Moreover, \eqref{eq:transfer-norms} holds since $\norm{\nabla_{\Gamma_{h,*}^m}e_x^m}{L^\infty(\Gamma_{h,*}^m)}\le Ch^{5/2}\le\frac12$ for sufficiently small $h$.

	\emph{The normal error.} Taking $\phi_h=e_V^{m+1}$ in \eqref{eq:error-equation}, the mass term equals $\norm{\Pi_h^m(e_V^{m+1}\cdot n_h^m)}{L^2(\Gamma_h^m)}^2$. Since $e_V^{m+1} = I_h(\overline N_{h,*}^me_V^{m+1}) + I_h(\overline T_{h,*}^me_V^{m+1})$, the $L^2$ stability of $\Pi_h^m$, \eqref{eq:tangential-smallness} and \eqref{eq:transfer-norms} give
	\begin{equation*}
		\begin{aligned}
			&\norm{\Pi_h^m\bigl(I_h(\overline N_{h,*}^me_V^{m+1})\cdot n_h^m\bigr)}{L^2(\Gamma_h^m)} \\
			&\le C\norm{\Pi_h^m(e_V^{m+1}\cdot n_h^m)}{L^2(\Gamma_h^m)} +C\norm{I_h(\overline T_{h,*}^me_V^{m+1})\cdot n_h^m}{L^2(\Gamma_h^m)} \\
			&\le C\norm{\Pi_h^m(e_V^{m+1}\cdot n_h^m)}{L^2(\Gamma_h^m)} +Ch^2\norm{I_h(\overline T_{h,*}^me_V^{m+1})}{H^1(\Gamma_{h,*}^m)}.
		\end{aligned}
	\end{equation*}
	Moreover, we use the identity $I_h(\overline N_{h,*}^mI_h(\overline N_{h,*}^me_V^{m+1})) = I_h(\overline N_{h,*}^me_V^{m+1})$ with \eqref{eq:superapproximation-II} for the first inequality below; the best approximation property of $\Pi_h^m$ and \eqref{eq:FE-superapproximation} for $\norm{(1-\Pi_h^m)(I_h(\overline N_{h,*}^me_V^{m+1})\cdot\overline n_{h,*}^m)}{L^2(\Gamma_h^m)}$ in the second; and $\norm{\overline n_{h,*}^m-n_h^m}{L^\infty(\Gamma_{h,*}^m)}\le Ch^{5/2}$ from Lemma~\ref{lem:normal-estimates}, \eqref{eq:normal-perturbation} and \eqref{eq:inverse} with the $L^2$ stability of $\Pi_h^m$ in the third. Together with \eqref{eq:transfer-norms} and \eqref{eq:inverse}, this gives
	\begin{equation*}
		\begin{aligned}
			&\norm{I_h(\overline N_{h,*}^me_V^{m+1})}{L^2(\Gamma_{h,*}^m)} \\
			&\le C\norm{I_h(\overline N_{h,*}^me_V^{m+1})\cdot\overline n_{h,*}^m}{L^2(\Gamma_{h,*}^m)}+Ch\norm{I_h(\overline N_{h,*}^me_V^{m+1})}{L^2(\Gamma_{h,*}^m)} \\
			&\le C\norm{\Pi_h^m(I_h(\overline N_{h,*}^me_V^{m+1})\cdot\overline n_{h,*}^m)}{L^2(\Gamma_h^m)}+Ch\norm{I_h(\overline N_{h,*}^me_V^{m+1})}{L^2(\Gamma_{h,*}^m)} \\
			&\le C\norm{\Pi_h^m(I_h(\overline N_{h,*}^me_V^{m+1})\cdot n_h^m)}{L^2(\Gamma_h^m)} +Ch\norm{I_h(\overline N_{h,*}^me_V^{m+1})}{L^2(\Gamma_{h,*}^m)}.
		\end{aligned}
	\end{equation*}
	Combining these, for sufficiently small $h$ we obtain
	\begin{equation}\label{eq:mass-normal-coercive}
		\norm{\Pi_h^m(e_V^{m+1}\cdot n_h^m)}{L^2(\Gamma_h^m)}^2 \ge c\norm{I_h(\overline N_{h,*}^me_V^{m+1})}{L^2(\Gamma_{h,*}^m)}^2 -Ch^4\norm{I_h(\overline T_{h,*}^me_V^{m+1})}{H^1(\Gamma_{h,*}^m)}^2 .
	\end{equation}

	For the stiffness term, we know $e_x^{m+1}=e_x^m+\tau e_V^{m+1}$, $A_{h,*}^T$ is nonnegative, and $A_{h,*} = A_{h,*}^N + A_{h,*}^T$. Moreover, $\Gamma_{h,*}^{m+1}=(\Id+\tau\overline V_{h,*}^{m+1})(\Gamma_{h,*}^m)$ with $\norm{\overline V_{h,*}^{m+1}}{W^{1,\infty}(\Gamma_{h,*}^m)}\le C$. Hence the transport formula and $\partial^\bullet n=-(\nabla_\Gamma V)n$ \cite[Lemma~5.1]{HLT}, applied along $\Id+s\tau\overline V_{h,*}^{m+1}$, $s\in[0,1]$, give for every finite element function $u_h$
	\begin{equation}\label{eq:surface-change}
		\begin{aligned}
			&\Bigl|\norm{u_h}{L^2(\Gamma_{h,*}^{m+1})}^2-\norm{u_h}{L^2(\Gamma_{h,*}^m)}^2\Bigr| +\Bigl|\norm{u_h}{H^1(\Gamma_{h,*}^{m+1})}^2-\norm{u_h}{H^1(\Gamma_{h,*}^m)}^2\Bigr| \\
			&\quad+\Bigl|A_{\Gamma_{h,*}^{m+1}}^N(u_h,u_h)-A_{\Gamma_{h,*}^m}^N(u_h,u_h)\Bigr| \le C\tau\norm{u_h}{H^1(\Gamma_{h,*}^m)}^2 .
		\end{aligned}
	\end{equation}
	Combining these with \eqref{eq:surface-change} for $u_h=e_x^{m+1}$, we have
	\begin{equation}\label{eq:stiffness-normal-coercive}
		\begin{aligned}
			&A_{h,*}^N(e_x^{m+1},e_V^{m+1})+\tau A_{h,*}^T(e_V^{m+1},e_V^{m+1}) \\
			&=\frac1{2\tau}\bigl[A_{h,*}^N(e_x^{m+1},e_x^{m+1})-A_{h,*}^N(e_x^m,e_x^m)\bigr] \\
			&\quad+\frac\tau2A_{h,*}(e_V^{m+1},e_V^{m+1})+\frac{\tau}{2} A_{h,*}^T(e_V^{m+1},e_V^{m+1}) \\
			&\ge\frac1{2\tau}\bigl[A_{\Gamma_{h,*}^{m+1}}^N(e_x^{m+1},e_x^{m+1}) -A_{\Gamma_{h,*}^m}^N(e_x^m,e_x^m)\bigr] \\
			&\quad+\frac\tau2\norm{\nabla_{\Gamma_{h,*}^m}e_V^{m+1}}{L^2(\Gamma_{h,*}^m)}^2 - C \norm{e_x^{m+1}}{H^1(\Gamma_{h,*}^{m+1})}^2.
		\end{aligned}
	\end{equation}
	By \eqref{eq:normal-defect-discrete}, \eqref{eq:delta}, \eqref{eq:JKB-bounds} and inverse estimate \eqref{eq:inverse},
	\begin{equation}\label{eq:error-normal-remainder}
		\begin{aligned}
			&|d_h^{m+1}(e_V^{m+1})|+|J^m(e_V^{m+1})|+|B^m(e_x^m,e_V^{m+1})|+|K^m(e_V^{m+1})| \\
			&\quad\le C\bigl(\delta^m+\norm{e_x^m}{H^1(\Gamma_{h,*}^m)}\bigr)\norm{e_V^{m+1}}{L^2(\Gamma_{h,*}^m)} +Ch^{5/2}\tau\norm{e_V^{m+1}}{H^1(\Gamma_{h,*}^m)}^2,
		\end{aligned}
	\end{equation}
	and, for sufficiently small $h$, the last term is absorbed by the $L^2$ norm of $e_V^{m+1}$ controlled below.

	Combining \eqref{eq:error-equation} with $\phi_h=e_V^{m+1}$, \eqref{eq:mass-normal-coercive}, \eqref{eq:stiffness-normal-coercive} and \eqref{eq:error-normal-remainder}, we obtain, for every $\varrho>0$,
	\begin{equation}\label{eq:normal-estimate-preliminary}
		\begin{aligned}
			&\frac{1}{2\tau}\bigl[A_{\Gamma_{h,*}^{m+1}}^N(e_x^{m+1},e_x^{m+1})-A_{\Gamma_{h,*}^m}^N(e_x^m,e_x^m)\bigr]+\frac\tau2\norm{\nabla_{\Gamma_{h,*}^m}e_V^{m+1}}{L^2(\Gamma_{h,*}^m)}^2 \\
			&\quad + c \left(\norm{\Pi_h^m(e_V^{m+1}\cdot n_h^m)}{L^2(\Gamma_h^m)}^2 + \norm{I_h(\overline N_{h,*}^me_V^{m+1})}{L^2(\Gamma_{h,*}^m)}^2\right) \\
			&\quad\le\varrho\norm{I_h(\overline T_{h,*}^me_V^{m+1})}{H^1(\Gamma_{h,*}^m)}^2 \\
			&\quad+C_\varrho\left((\delta^m)^2+\norm{e_x^m}{H^1(\Gamma_{h,*}^m)}^2+\norm{e_x^{m+1}}{H^1(\Gamma_{h,*}^{m+1})}^2\right).
		\end{aligned}
	\end{equation}

	\emph{The tangential error.} We take $\phi_h=I_h(\overline T_{h,*}^me_V^{m+1})$ in \eqref{eq:error-equation}:
	\begin{equation}\label{eq:tangential-equation}
		\begin{aligned}
			\tau A_{h,*}\bigl(e_V^{m+1},I_h(\overline T_{h,*}^me_V^{m+1})\bigr)&=-\int_{\Gamma_h^m}\Pi_h^m(e_V^{m+1}\cdot n_h^m)\,\bigl(I_h(\overline T_{h,*}^me_V^{m+1})\cdot n_h^m\bigr) \\
			&\quad-d_h^{m+1}(I_h(\overline T_{h,*}^me_V^{m+1}))-K^m(I_h(\overline T_{h,*}^me_V^{m+1})) \\
			&\quad-\bigl[J^m(I_h(\overline T_{h,*}^me_V^{m+1}))+A_{h,*}^N(e_x^m,I_h(\overline T_{h,*}^me_V^{m+1})) \\
			&\qquad+B^m(e_x^m,I_h(\overline T_{h,*}^me_V^{m+1}))\bigr].
		\end{aligned}
	\end{equation}
	For the right-hand side of \eqref{eq:tangential-equation}, \eqref{eq:JKB-bounds}, \eqref{eq:induction} and \eqref{eq:inverse} bound $K^m$, and \eqref{eq:T-T-tangential} bounds the last term:
	\begin{equation}\label{eq:K-tangential}
		\begin{aligned}
			&|K^m(I_h(\overline T_{h,*}^me_V^{m+1}))| \\
			&\quad\leq C\left(h^2 \norm{e_x^m}{H^1(\Gamma_{h,*}^m)} + h^{7/2} \norm{e_V^{m+1}}{L^2(\Gamma_{h,*}^m)} \right)\norm{I_h(\overline T_{h,*}^me_V^{m+1})}{H^1(\Gamma_{h,*}^m)},
		\end{aligned}
	\end{equation}
	\begin{equation}\label{eq:bracket-tangential}
		\begin{aligned}
			&\left|J^m(I_h(\overline T_{h,*}^me_V^{m+1}))+A_{h,*}^N(e_x^m,I_h(\overline T_{h,*}^me_V^{m+1}))+B^m(e_x^m,I_h(\overline T_{h,*}^me_V^{m+1}))\right| \\
			&\quad\le C h^2 \left(\norm{e_x^m}{H^1(\Gamma_{h,*}^m)} + D_h^m\right)\norm{I_h(\overline T_{h,*}^me_V^{m+1})}{H^1(\Gamma_{h,*}^m)}.
		\end{aligned}
	\end{equation}
	By the tangential Poincar\'e inequality \cite[(3.41)]{HLT} and the normal-tangential cancellation \cite[Lemma~5.3]{HLT}
	\begin{equation}\label{eq:stiffness-tangential-coercive}
		A_{h,*}\bigl(e_V^{m+1},I_h(\overline T_{h,*}^me_V^{m+1})\bigr) \ge c_1\norm{I_h(\overline T_{h,*}^me_V^{m+1})}{H^1(\Gamma_{h,*}^m)}^2 -C\norm{I_h(\overline N_{h,*}^me_V^{m+1})}{L^2(\Gamma_{h,*}^m)}^2.
	\end{equation}
	For the defect, we write $I_h((I-P_*^m)\psi_h)=(\psi_h\cdot n_*^m)n_*^m+(1-I_h)(P_*^m\psi_h)$ with $\psi_h=I_h(\overline T_{h,*}^me_V^{m+1})$. Then \eqref{eq:tangential-smallness}, \eqref{eq:P-superapproximation}, \eqref{eq:normal-defect-discrete}, \eqref{eq:tangential-defect-discrete} and \eqref{eq:delta} give
	\begin{equation}\label{eq:defect-tangential}
		\begin{aligned}
			&|d_h^{m+1}(I_h(\overline T_{h,*}^me_V^{m+1}))| \\
			&\le D_2^m\norm{I_h\bigl(P_*^mI_h(\overline T_{h,*}^me_V^{m+1})\bigr)}{H^1(\Gamma_{h,*}^m)} +D_1^m\norm{I_h\bigl((I-P_*^m)I_h(\overline T_{h,*}^me_V^{m+1})\bigr)}{L^2(\Gamma_{h,*}^m)} \\
			&\le C(D_2^m+h^2D_1^m)\norm{I_h(\overline T_{h,*}^me_V^{m+1})}{H^1(\Gamma_{h,*}^m)} \\
			&\leq C h^2 \delta^m\norm{I_h(\overline T_{h,*}^me_V^{m+1})}{H^1(\Gamma_{h,*}^m)}.
		\end{aligned}
	\end{equation}

	By \eqref{eq:tangential-smallness} and \eqref{eq:transfer-norms}, the mass term is bounded by
	\begin{equation*}
		Ch^2\norm{\Pi_h^m(e_V^{m+1}\cdot n_h^m)}{L^2(\Gamma_h^m)} \norm{I_h(\overline T_{h,*}^me_V^{m+1})}{H^1(\Gamma_{h,*}^m)}.
	\end{equation*}
	Altogether, using \eqref{eq:D-bound} for $D_h^m$, we obtain for sufficiently small $h$
	\begin{equation}\label{eq:tangential-estimate}
		\begin{aligned}
			\norm{I_h(\overline T_{h,*}^me_V^{m+1})}{H^1(\Gamma_{h,*}^m)}^2&\le C\left(\norm{\Pi_h^m(e_V^{m+1}\cdot n_h^m)}{L^2(\Gamma_h^m)}^2 + \norm{I_h(\overline N_{h,*}^me_V^{m+1})}{L^2(\Gamma_{h,*}^m)}^2\right) \\
			&\quad +C\left(\left\|e_x^m\right\|_{H^1(\Gamma_{h,*}^m)}^2 + (\delta^m)^2\right).
		\end{aligned}
	\end{equation}
	Substituting this into \eqref{eq:normal-estimate-preliminary} and choosing $\varrho$ sufficiently small yields
	\begin{equation}\label{eq:normal-tangential-estimate-final}
		\begin{aligned}
			&\frac{1}{2\tau}\bigl[A_{\Gamma_{h,*}^{m+1}}^N(e_x^{m+1},e_x^{m+1})-A_{\Gamma_{h,*}^m}^N(e_x^m,e_x^m)\bigr]+\frac\tau2\norm{\nabla_{\Gamma_{h,*}^m}e_V^{m+1}}{L^2(\Gamma_{h,*}^m)}^2 \\
			&\quad + c \left(\norm{e_V^{m+1}}{L^2(\Gamma_{h,*}^m)}^2 + \norm{I_h(\overline T_{h,*}^me_V^{m+1})}{H^1(\Gamma_{h,*}^m)}^2\right) \\
			&\quad\leq C\left((\delta^m)^2+\norm{e_x^m}{H^1(\Gamma_{h,*}^m)}^2+\norm{e_x^{m+1}}{H^1(\Gamma_{h,*}^{m+1})}^2\right).
		\end{aligned}
	\end{equation}

	\emph{Recovery of the full norm.} Let
	\begin{equation*}
		\mathcal E^m =A_{\Gamma_{h,*}^m}^N(e_x^m,e_x^m) + \norm{e_x^m}{L^2(\Gamma_{h,*}^m)}^2 +\norm{I_h(P_*^me_x^m)}{H^1(\Gamma_{h,*}^m)}^2.
	\end{equation*}
	We first show that $\mathcal E^m$ is equivalent to $\norm{e_x^m}{H^1(\Gamma_{h,*}^m)}^2$, i.e.,
	\begin{equation}\label{eq:energy-equivalence}
		C^{-1}\norm{e_x^m}{H^1(\Gamma_{h,*}^m)}^2\le\mathcal E^m\le C\norm{e_x^m}{H^1(\Gamma_{h,*}^m)}^2 .
	\end{equation}
	The upper bound follows from $A_{\Gamma_{h,*}^m}^N(e_x^m,e_x^m)\le\norm{\nabla_{\Gamma_{h,*}^m}e_x^m}{L^2(\Gamma_{h,*}^m)}^2$. 
	
	For the lower bound, we differentiate $e_x^m=I_h(P_*^me_x^m)+(e_x^m\cdot n_*^m)n_*^m+(1-I_h)(P_*^me_x^m)$. Since $\norm{n_*^m-n_{h,*}^m}{L^\infty(\Gamma_{h,*}^m)}\le Ch^{k-1}$ by \eqref{eq:normal-estimates} and $\norm{(1-I_h)(P_*^me_x^m)}{H^1(\Gamma_{h,*}^m)}\le Ch\norm{e_x^m}{H^1(\Gamma_{h,*}^m)}$ by \eqref{eq:P-superapproximation}, the inverse estimate \eqref{eq:inverse} gives
	\begin{equation*}
		\begin{aligned}
			\norm{\nabla_{\Gamma_{h,*}^m}e_x^m}{L^2(\Gamma_{h,*}^m)} &\le\norm{I_h(P_*^me_x^m)}{H^1(\Gamma_{h,*}^m)} +A_{\Gamma_{h,*}^m}^N(e_x^m,e_x^m)^{1/2} \\
			&\quad+C\norm{e_x^m}{L^2(\Gamma_{h,*}^m)} +Ch\norm{e_x^m}{H^1(\Gamma_{h,*}^m)}.
		\end{aligned}
	\end{equation*}
	For sufficiently small $h$, the last term is absorbed, which proves \eqref{eq:energy-equivalence}.

	Since $e_x^{m+1}=e_x^m+\tau e_V^{m+1}$,
	\begin{equation*}
		I_h(P_*^{m+1}e_x^{m+1}) =I_h(P_*^me_x^m)+\tau I_h(P_*^me_V^{m+1})+I_h\bigl((P_*^{m+1}-P_*^m)e_x^{m+1}\bigr),
	\end{equation*}
	where, by $|P_*^m-\overline T_{h,*}^m|\le C|\overline n_{h,*}^m-n_*^m|$, \eqref{eq:normal-estimates}, \eqref{eq:inverse} and Assumption~\ref{ass:regularity},
	\begin{equation*}
		\begin{aligned}
			\norm{I_h(P_*^me_V^{m+1})}{H^1(\Gamma_{h,*}^m)} &\le\norm{I_h(\overline T_{h,*}^me_V^{m+1})}{H^1(\Gamma_{h,*}^m)}+Ch^{k-2}\norm{e_V^{m+1}}{L^2(\Gamma_{h,*}^m)}, \\
			\norm{I_h\bigl((P_*^{m+1}-P_*^m)e_x^{m+1}\bigr)}{H^1(\Gamma_{h,*}^m)}&\le C\tau\norm{e_x^{m+1}}{H^1(\Gamma_{h,*}^{m})}.
		\end{aligned}
	\end{equation*}
	By \eqref{eq:surface-change}, replacing $\Gamma_{h,*}^{m+1}$ by $\Gamma_{h,*}^m$ in the norms of $e_x^{m+1}$ and $I_h(P_*^{m+1}e_x^{m+1})$ costs at most $C\tau\norm{e_x^{m+1}}{H^1(\Gamma_{h,*}^m)}^2$. On $\Gamma_{h,*}^m$ we expand the squares using $e_x^{m+1}=e_x^m+\tau e_V^{m+1}$ and the above decomposition, and apply Young's inequality. For sufficiently small $h$, this gives
	\begin{equation*}
		\begin{aligned}
			&\frac1\tau\Bigl[ \norm{e_x^{m+1}}{L^2(\Gamma_{h,*}^{m+1})}^2+\norm{I_h(P_*^{m+1}e_x^{m+1})}{H^1(\Gamma_{h,*}^{m+1})}^2 \\
			&\qquad -\norm{e_x^m}{L^2(\Gamma_{h,*}^m)}^2-\norm{I_h(P_*^me_x^m)}{H^1(\Gamma_{h,*}^m)}^2\Bigr] \\
			&\le \varepsilon\bigl(\norm{e_V^{m+1}}{L^2(\Gamma_{h,*}^m)}^2 +\norm{I_h(\overline T_{h,*}^me_V^{m+1})}{H^1(\Gamma_{h,*}^m)}^2\bigr) \\
			&\quad+C(\varepsilon)\bigl(\norm{e_x^m}{H^1(\Gamma_{h,*}^m)}^2+\norm{e_x^{m+1}}{H^1(\Gamma_{h,*}^{m})}^2\bigr).
		\end{aligned}
	\end{equation*}
	Adding this to twice \eqref{eq:normal-tangential-estimate-final}, using \eqref{eq:energy-equivalence} and choosing $\varepsilon$ sufficiently small, we obtain
	\begin{equation}\label{eq:total-energy}
		\frac{\mathcal E^{m+1}-\mathcal E^m}{\tau} \le C\bigl(\mathcal E^m+\mathcal E^{m+1}+(\delta^m)^2\bigr).
	\end{equation}
	Since $e_x^0=0$, the discrete Gronwall inequality, \eqref{eq:energy-equivalence} and \eqref{eq:effective-defect} give
	\begin{equation}\label{eq:final-error-bound}
		\max_{0\le j\le m+1}\norm{e_x^j}{H^1(\Gamma_{h,*}^j)}^2 \le C\sum_{j=0}^m\tau(\delta^j)^2\le Ch^8.
	\end{equation}

	Finally, by \eqref{eq:final-error-bound}, $\norm{e_x^{m+1}}{H^1(\Gamma_{h,*}^{m+1})}\le Ch^4\le\frac12h^{7/2}$ for sufficiently small $h$. Hence \eqref{eq:induction} holds for $j=m+1$, which proves \eqref{eq:error-estimate}. The estimate \eqref{eq:error-estimate-Y0} follows from \eqref{eq:error-estimate}, the interpolation error \eqref{eq:interpolation}, Lemma~\ref{lem:flow-comparison} and the norm equivalence.
\end{proof}

\subsection{Proof of the main theorem}\label{subsec:manifold-distance}

\begin{lemma}\label{lem:manifold-distance}
	Let $W:\Gamma^0\to\R^3$ be a regular $C^2$ embedding, and let $\delta$, $a$, $d$ be as in \eqref{eq:signed-distance-representation} for $\Gamma[W]$. If $U:\Gamma^0\to D_\delta(\Gamma[W])$ is a Lipschitz embedding and $a\circ U:\Gamma^0\to\Gamma[W]$ is bijective, then
	\begin{equation}\label{eq:manifold-L1}
		\dman(\Gamma[U],\Gamma[W]) \le C\norm{\nabla_{\Gamma^0}U}{L^\infty(\Gamma^0)}^2\norm{U-W}{L^1(\Gamma^0)}.
	\end{equation}
\end{lemma}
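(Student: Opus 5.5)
We use tubular coordinates around $\Gamma[W]$. Since $a\circ U:\Gamma^0\to\Gamma[W]$ is a bijection, the surface $\Gamma[U]$ is a normal graph over $\Gamma[W]$. Write
\begin{equation*}
	U(p)=a(U(p))+g(a(U(p)))\,n(a(U(p))), \qquad p\in\Gamma^0,
\end{equation*}
where $g=d\circ U\circ(a\circ U)^{-1}$ is a Lipschitz function on $\Gamma[W]$ with $|g|\le\delta$, and $n$ is the unit normal of $\Gamma[W]$. The symmetric difference $\Omega_U\triangle\Omega_W$ is then contained in the set $\{y+s\,n(y): y\in\Gamma[W],\ s \text{ between } 0 \text{ and } g(y)\}$. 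The Jacobian of $(y,s)\mapsto y+sn(y)$ is $\prod_i(1-s\kappa_i(y))$, which is bounded by a constant because $W$ is $C^2$ and $\delta$ is small. Hence
\begin{equation*}
	\dman(\Gamma[U],\Gamma[W])\le C\int_{\Gamma[W]}|g|\dd A .
\end{equation*}

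Next we compare $|g|$ with $|U-W|$. By \eqref{eq:signed-distance-representation}, $|g(a(U(p)))|=|d(U(p))|=\operatorname{dist}(U(p),\Gamma[W])\le|U(p)-W(p)|$, since $W(p)\in\Gamma[W]$. We then pull the integral back to $\Gamma^0$ through the bijection $\Psi=a\circ U$:
\begin{equation*}
	\int_{\Gamma[W]}|g|\dd A=\int_{\Gamma^0}|d(U(p))|\,|J\Psi(p)|\dd A(p)\le\norm{J\Psi}{L^\infty(\Gamma^0)}\norm{U-W}{L^1(\Gamma^0)} .
\end{equation*}
This uses the area formula for the Lipschitz bijection $\Psi$, whose tangential Jacobian is $J\Psi$.

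The Jacobian is a $2\times2$ determinant of the tangential differential $\nabla_{\Gamma^0}\Psi=\nabla a(U)\,\nabla_{\Gamma^0}U$. Here $\nabla a$ is bounded on $D_\delta(\Gamma[W])$ because $a$ is $C^1$ there when $W\in C^2$. Consequently $|J\Psi|\le C|\nabla_{\Gamma^0}U|^2$, and this is exactly the origin of the squared factor in \eqref{eq:manifold-L1}.

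The main technical point is the first inclusion: that the symmetric difference lies inside the region swept between $\Gamma[W]$ and the normal graph. The argument is as follows. Inside the tubular neighborhood, a point $y+sn(y)$ belongs to $\Omega_U$ exactly when $s$ lies on the interior side of $g(y)$. This holds because the segment $\{y+sn(y):|s|\le\delta\}$ meets $\Gamma[U]$ only once, at $s=g(y)$, by bijectivity of $a\circ U$. Outside the tube, the two domains $\Omega_U$ and $\Omega_W$ coincide, since $\Gamma[U]\subset D_\delta$ and a connectedness argument applies. Orientation and sign conventions are routine.
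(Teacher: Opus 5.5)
Your proof is correct and is essentially the paper's argument: the paper applies the area formula once to the homotopy $H(p,\theta)=a(U(p))+\theta\,d(U(p))\,n(a(U(p)))$ on $\Gamma^0\times[0,1]$. That map is exactly your tubular-coordinate map $(y,s)\mapsto y+s\,n(y)$ composed with $(p,\theta)\mapsto\bigl(a(U(p)),\theta\,g(a(U(p)))\bigr)$, so $|d(U(p))|\le|U(p)-W(p)|$ and the bound $C|\nabla_{\Gamma^0}U|^2$ on the Jacobian of $a\circ U$ enter in the same way (you also sketch why the symmetric difference lies in the swept region, which the paper only asserts). One small correction: $g$ need not be Lipschitz, since $(a\circ U)^{-1}$ need not be. Your argument only uses that $g$ is continuous, which holds because $a\circ U$ is a continuous bijection from the compact $\Gamma^0$ onto $\Gamma[W]$ and hence a homeomorphism.
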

\begin{proof}
	Consider the homotopy
	\begin{equation*}
		H(p,\theta)=\theta U(p)+(1-\theta)a(U(p))=a(U(p))+\theta\,d(U(p))\,n(a(U(p)))
	\end{equation*}
	for $(p,\theta)\in\Gamma^0\times[0,1]$. Since $a\circ U$ is bijective, $\Omega[U]\triangle\Omega[W]\subset H(\Gamma^0\times[0,1])$. Moreover, $\partial_\theta H(p,\theta)=d(U(p))\,n(a(U(p)))$, and $|\nabla_{\Gamma^0}H|\le C|\nabla_{\Gamma^0}U|$ since $a$ is Lipschitz in $D_\delta(\Gamma[W])$ \cite[Section~2.1]{Demlow}. Hence the Jacobian of $H$ is bounded by $C|d(U(p))|\,|\nabla_{\Gamma^0}U(p)|^2$, and the area formula gives
	\begin{equation*}
		\begin{aligned}
			\dman(\Gamma[U],\Gamma[W])&\le C\norm{\nabla_{\Gamma^0}U}{L^\infty(\Gamma^0)}^2\left\|d(U(p))\right\|_{L^1(\Gamma^0)} \\
			&\le C\norm{\nabla_{\Gamma^0}U}{L^\infty(\Gamma^0)}^2\norm{U-W}{L^1(\Gamma^0)},
		\end{aligned}
	\end{equation*}
	where we used $|d(U(p))|\le|U(p)-W(p)|$, since $W(p)\in\Gamma[W]$.
\end{proof}

\begin{proof}[Proof of Theorem~\ref{thm:main}]
	Since $V_0[Y^{[0]}]\cdot n[Y^{[0]}]=-H[Y^{[0]}]$, the flow $Y^{[0]}$ differs from the mean curvature flow only by a tangential velocity, so that $\Gamma[Y^{[0]}(t_m)]=\Gamma[X(t_m)]$. Let $U=(X_h^m)^{\ell_0}$ and $W=Y^{[0]}(t_m)$. By \eqref{eq:error-estimate-Y0},
	\begin{equation}\label{eq:UW-L2}
		\norm{U-W}{L^2(\Gamma^0)}\le C h^2.
	\end{equation}
	For the $W^{1,\infty}$ norm, we split
	\begin{equation*}
		U-W=\bigl(e_x^m\circ X_{h,*}^m\bigr)^{\ell_0}+\bigl((I_hY^{[1]}(t_m))^{\ell_0}-Y^{[1]}(t_m)\bigr)+\bigl(Y^{[1]}(t_m)-Y^{[0]}(t_m)\bigr).
	\end{equation*}
	The first term is bounded by $Ch^{-1}\norm{e_x^m}{H^1(\Gamma_{h,*}^m)}\le Ch^3$ by \eqref{eq:inverse} and \eqref{eq:error-estimate}, and the second by $Ch^k$. For the third term, the interpolation inequality $\norm{\nabla_{\Gamma^0}(Y^{[1]}-Y^{[0]})(t_m)}{L^\infty(\Gamma^0)}\le C\norm{\nabla_{\Gamma^0}(Y^{[1]}-Y^{[0]})(t_m)}{L^2(\Gamma^0)}^{1/2}\norm{\nabla_{\Gamma^0}(Y^{[1]}-Y^{[0]})(t_m)}{H^2(\Gamma^0)}^{1/2}$, \eqref{eq:flow-comparison} and \eqref{eq:regularity} give the bound $C\tau^{1/2}$. Hence
	\begin{equation}\label{eq:UW-W1inf}
		\norm{\nabla_{\Gamma^0}U}{L^\infty(\Gamma^0)} \leq \norm{W}{W^{1,\infty}(\Gamma^0)} + \norm{U-W}{W^{1,\infty}(\Gamma^0)}\le C + Ch \leq C.
	\end{equation}
	Finally, Lemma~\ref{lem:manifold-distance} and \eqref{eq:UW-L2}, \eqref{eq:UW-W1inf} give
	\begin{equation*}
		\dman\bigl(\Gamma_h^m,\Gamma[X(t_m)]\bigr)\le C\norm{U-W}{L^1(\Gamma^0)}\le C(\tau+h^4+h^{k+1})\le Ch^2,
	\end{equation*}
	which is \eqref{eq:main-manifold}.
\end{proof}

\appendix
\section{Proof of Lemma~\ref{lem:tangential-identity}}\label{app:tangential-identity}

\begin{proof}[Proof of Lemma~\ref{lem:tangential-identity}]
	By the assumption $\norm{e_x}{H^1(\Gamma_{h,*})}\le h^{7/2}$ and the inverse estimate \eqref{eq:inverse}, $\norm{\nabla_{\Gamma_{h,*}}e_x}{L^\infty(\Gamma_{h,*})}\le Ch^{5/2}$.

	Following \cite[Section~3.3]{HLT}, we introduce the intermediate surface $\Gamma_{h,\theta}$ between $\Gamma_{h,*}$ and $\Gamma_h$ by
	\begin{equation*}
		\Gamma_{h,\theta}=(1-\theta)\Gamma_{h,*}+\theta\Gamma_h=(\Id+\theta e_x)(\Gamma_{h,*}),\qquad\theta\in[0,1],
	\end{equation*}
	with unit normal $n_{h,\theta}$ satisfying $n_{h,0}=n_{h,*}$ and $n_{h,1}=n_h$. For a function $f_\theta$ on $\Gamma_{h,\theta}$, the material derivative is
	\begin{equation*}
		\partial_\theta^\bullet f_\theta=\frac{\mathrm d}{\mathrm d\theta}\bigl(f_\theta\circ(\Id+\theta e_x)\bigr)\circ(\Id+\theta e_x)^{-1}.
	\end{equation*}
	For $f_\theta$ and $g_\theta$ on $\Gamma_{h,\theta}$, we have \cite[Lemma~5.1]{HLT}
	\begin{equation}\label{eq:transport}
		\frac{\mathrm d}{\mathrm d\theta}\int_{\Gamma_{h,\theta}}f_\theta g_\theta=\int_{\Gamma_{h,\theta}}\partial_\theta^\bullet f_\theta\,g_\theta +\int_{\Gamma_{h,\theta}}f_\theta\,\partial_\theta^\bullet g_\theta +\int_{\Gamma_{h,\theta}}f_\theta g_\theta\,\nabla_{\Gamma_{h,\theta}}\cdot e_x.
	\end{equation}
	Denote the $L^2$ projection onto $S_h(\Gamma_{h,\theta})$ by $\Pi_{h,\theta}$. From \cite[(3.26), (3.27)]{HLT} and \eqref{eq:material-identities}, we have
	\begin{align}
		\partial_\theta^\bullet\Pi_{h,\theta} f_\theta &=\Pi_{h,\theta}\partial_\theta^\bullet f_\theta - \Pi_{h,\theta}\left(\bigl(\Pi_{h,\theta}f_\theta-f_\theta\bigr)\nabla_{\Gamma_{h,\theta}}\cdot e_x\right), \label{eq:projection-derivative} \\
		\partial_\theta^\bullet n_{h,\theta} &=-(\nabla_{\Gamma_{h,\theta}}e_x)n_{h,\theta}. \label{eq:normal-derivative}
	\end{align}
	Since $\norm{\nabla_{\Gamma_{h,*}}e_x}{L^\infty(\Gamma_{h,*})}\le Ch^{5/2}\le\frac12$ for sufficiently small $h$, \cite[Lemma~4.3]{KLLP} yields the norm equivalences between $\Gamma_{h,\theta}$ and $\Gamma_{h,*}$ uniformly in $\theta\in[0,1]$ for every $f_\theta$ with $\partial_\theta^\bullet f_\theta=0$:
	\begin{equation}\label{eq:gradient-equivalence}
		c\norm{f_0}{W^{s,p}(\Gamma_{h,*})}\le\norm{f_\theta}{W^{s,p}(\Gamma_{h,\theta})}\le C\norm{f_0}{W^{s,p}(\Gamma_{h,*})}, \qquad s=0,1,\quad 1\le p\le\infty.
	\end{equation}
	Specifically, for $f_\theta=e_x$, we have
	\begin{equation}\label{eq:gradient-bounds}
		\norm{\nabla_{\Gamma_{h,\theta}}e_x}{L^2(\Gamma_{h,\theta})}\le C\norm{e_x}{H^1(\Gamma_{h,*})}, \quad \norm{\nabla_{\Gamma_{h,\theta}}e_x}{L^\infty(\Gamma_{h,\theta})} \le C\norm{e_x}{W^{1,\infty}(\Gamma_{h,*})}\le Ch^{5/2}.
	\end{equation}

	\emph{The mass term.} By \eqref{eq:J-definition}, \eqref{eq:transport} and \eqref{eq:normal-derivative},
	\begin{equation}\label{eq:J-splitting}
		\begin{aligned}
			J(\phi_h) &=\int_0^1\frac{\mathrm d}{\mathrm d\theta}\int_{\Gamma_{h,\theta}}\Pi_{h,\theta}(\overline V_{h,*}\cdot n_{h,\theta})\,(\phi_h\cdot n_{h,\theta})\dd\theta \\
			&=\int_0^1\int_{\Gamma_{h,\theta}}\partial_\theta^\bullet\Pi_{h,\theta}(\overline V_{h,*}\cdot n_{h,\theta})\,(\phi_h\cdot n_{h,\theta})\dd\theta \\
			&\quad+\int_0^1\int_{\Gamma_{h,\theta}}\Pi_{h,\theta}(\overline V_{h,*}\cdot n_{h,\theta})\,(\phi_h\cdot n_{h,\theta})\,\nabla_{\Gamma_{h,\theta}}\cdot e_x\dd\theta \\
			&\quad-\int_0^1\int_{\Gamma_{h,\theta}}\Pi_{h,\theta}(\overline V_{h,*}\cdot n_{h,\theta})\,[(\nabla_{\Gamma_{h,\theta}}e_x)n_{h,\theta}]\cdot\phi_h\dd\theta \\
			&=:J_1(\phi_h)+J_2(\phi_h)+J_3(\phi_h).
		\end{aligned}
	\end{equation}
	By the $L^2$ and $L^\infty$ stability of $\Pi_{h,\theta}$, \eqref{eq:projection-derivative}, \eqref{eq:normal-derivative} and \eqref{eq:gradient-bounds},
	\begin{equation}\label{eq:projection-bounds}
		\begin{aligned}
			\norm{\Pi_{h,\theta}(\overline V_{h,*}\cdot n_{h,\theta})}{L^\infty(\Gamma_{h,\theta})} &\le C\norm{\overline V_{h,*}}{L^\infty(\Gamma_{h,*})}\le C, \\
			\norm{\partial_\theta^\bullet\Pi_{h,\theta}(\overline V_{h,*}\cdot n_{h,\theta})}{L^2(\Gamma_{h,\theta})} &\le C\norm{\nabla_{\Gamma_{h,\theta}}e_x}{L^2(\Gamma_{h,\theta})} \le C\norm{e_x}{H^1(\Gamma_{h,*})}.
		\end{aligned}
	\end{equation}
	By Lemma~\ref{lem:normal-estimates}, \eqref{eq:normal-derivative}, \eqref{eq:gradient-bounds}, \eqref{eq:gradient-equivalence} and \eqref{eq:lift-norm},
	\begin{equation}\label{eq:normal-components}
		\begin{aligned}
			\norm{n_{h,\theta}-n_*}{L^\infty(\Gamma_{h,*})} &=\Bigl\|n_{h,*}-n_*-\int_0^\theta(\nabla_{\Gamma_{h,s}}e_x)n_{h,s}\dd s\Bigr\|_{L^\infty(\Gamma_{h,*})} \le Ch^{5/2}, \\
			\norm{\phi_h\cdot n_{h,\theta}}{L^2(\Gamma_{h,\theta})} &\le\norm{\phi_h\cdot n_*}{L^2(\Gamma_{h,\theta})} +\norm{n_{h,\theta}-n_*}{L^\infty(\Gamma_{h,*})}\norm{\phi_h}{L^2(\Gamma_{h,\theta})} \\
			&\le C\norm{\phi_h^\ell\cdot n}{L^2(\Gamma)}+Ch^{5/2}\norm{\phi_h}{L^2(\Gamma_{h,*})}.
		\end{aligned}
	\end{equation}
	Hence, by \eqref{eq:projection-bounds}, \eqref{eq:gradient-bounds} and \eqref{eq:normal-components},
	\begin{equation*}
		\begin{aligned}
			&|J_1(\phi_h)|+|J_2(\phi_h)| \\
			&\le\sup_{\theta\in[0,1]}\Bigl(\norm{\partial_\theta^\bullet\Pi_{h,\theta}(\overline V_{h,*}\cdot n_{h,\theta})}{L^2(\Gamma_{h,\theta})} +C\norm{\nabla_{\Gamma_{h,\theta}}\cdot e_x}{L^2(\Gamma_{h,\theta})}\Bigr)\norm{\phi_h\cdot n_{h,\theta}}{L^2(\Gamma_{h,\theta})} \\
			&\le C\norm{e_x}{H^1(\Gamma_{h,*})}\bigl(\norm{\phi_h^\ell\cdot n}{L^2(\Gamma)}+h^{5/2}\norm{\phi_h}{L^2(\Gamma_{h,*})}\bigr).
		\end{aligned}
	\end{equation*}
	For $J_3(\phi_h)$, we further write it as
	\begin{equation}\label{eq:J3-splitting}
		\begin{aligned}
			J_3(\phi_h) &=-\int_0^1\left(\int_{\Gamma_{h,\theta}}\Pi_{h,\theta}(\overline V_{h,*}\cdot n_{h,\theta})\,[(\nabla_{\Gamma_{h,\theta}}e_x)n_{h,\theta}]\cdot\phi_h \right. \\
			&\qquad \left.-\int_{\Gamma_{h,*}}\Pi_{h,*}(\overline V_{h,*}\cdot n_{h,*})\,[(\nabla_{\Gamma_{h,*}}e_x)n_{h,*}]\cdot\phi_h\right)\dd\theta \\
			&\quad-\int_{\Gamma_{h,*}}\bigl[\Pi_{h,*}(\overline V_{h,*}\cdot n_{h,*})-(\overline V_\tau\cdot n)^{-\ell}\bigr]\,[(\nabla_{\Gamma_{h,*}}e_x)n_{h,*}]\cdot\phi_h \\
			&\quad-\int_{\Gamma_{h,*}}(\overline V_\tau\cdot n)^{-\ell}\,[(\nabla_{\Gamma_{h,*}}e_x)(n_{h,*}-n_*)]\cdot\phi_h \\
			&\quad-\Bigl(\int_{\Gamma_{h,*}}(\overline V_\tau\cdot n)^{-\ell}\,[(\nabla_{\Gamma_{h,*}}e_x)n_*]\cdot\phi_h -\int_\Gamma(\overline V_\tau\cdot n)\,[(\nabla_\Gamma e_x^\ell)n]\cdot\phi_h^\ell\Bigr) \\
			&\quad-\int_\Gamma(\overline V_\tau\cdot n)\,[(\nabla_\Gamma e_x^\ell)n]\cdot\phi_h^\ell \\
			&=:J_{31}(\phi_h)+J_{32}(\phi_h)+J_{33}(\phi_h)+J_{34}(\phi_h) -\int_\Gamma(\overline V_\tau\cdot n)\,[(\nabla_\Gamma e_x^\ell)n]\cdot\phi_h^\ell .
		\end{aligned}
	\end{equation}
	For $J_{31}(\phi_h)$, \eqref{eq:commutator} with $\partial_\theta^\bullet e_x=0$ and \eqref{eq:normal-derivative} give
	\begin{equation}\label{eq:gradient-derivative}
		\begin{aligned}
			\partial_\theta^\bullet(\nabla_{\Gamma_{h,\theta}}e_x) &=-(\nabla_{\Gamma_{h,\theta}}e_x)^2 +n_{h,\theta}\otimes n_{h,\theta}\,(\nabla_{\Gamma_{h,\theta}}e_x)^T(\nabla_{\Gamma_{h,\theta}}e_x), \\
			\partial_\theta^\bullet\bigl((\nabla_{\Gamma_{h,\theta}}e_x)n_{h,\theta}\bigr) &=-2(\nabla_{\Gamma_{h,\theta}}e_x)^2n_{h,\theta} +\bigl|(\nabla_{\Gamma_{h,\theta}}e_x)n_{h,\theta}\bigr|^2n_{h,\theta}.
		\end{aligned}
	\end{equation}
	Hence, by \eqref{eq:transport}, \eqref{eq:projection-bounds}, \eqref{eq:gradient-bounds} and \eqref{eq:gradient-equivalence},
	\begin{equation*}
		\begin{aligned}
			&\left|\frac{\mathrm d}{\mathrm d\theta}\int_{\Gamma_{h,\theta}}\Pi_{h,\theta}(\overline V_{h,*}\cdot n_{h,\theta})\,[(\nabla_{\Gamma_{h,\theta}}e_x)n_{h,\theta}]\cdot\phi_h\right| \\
			&\leq \left|\int_{\Gamma_{h,\theta}}\partial_\theta^\bullet\Pi_{h,\theta}(\overline V_{h,*}\cdot n_{h,\theta})\,[(\nabla_{\Gamma_{h,\theta}}e_x)n_{h,\theta}]\cdot\phi_h\right| \\
			&\qquad+\left|\int_{\Gamma_{h,\theta}}\Pi_{h,\theta}(\overline V_{h,*}\cdot n_{h,\theta})\left(\partial_\theta^\bullet\bigl((\nabla_{\Gamma_{h,\theta}}e_x)n_{h,\theta}\bigr) +(\nabla_{\Gamma_{h,\theta}}\cdot e_x)(\nabla_{\Gamma_{h,\theta}}e_x)n_{h,\theta}\right)\cdot\phi_h\right| \\
			&\le C\norm{\partial_\theta^\bullet\Pi_{h,\theta}(\overline V_{h,*}\cdot n_{h,\theta})}{L^2(\Gamma_{h,\theta})} \norm{\nabla_{\Gamma_{h,\theta}}e_x}{L^\infty(\Gamma_{h,\theta})}\norm{\phi_h}{L^2(\Gamma_{h,\theta})} \\
			& \qquad +C\norm{\nabla_{\Gamma_{h,\theta}}e_x}{L^2(\Gamma_{h,\theta})}\norm{\nabla_{\Gamma_{h,\theta}}e_x}{L^\infty(\Gamma_{h,\theta})}\norm{\phi_h}{L^2(\Gamma_{h,\theta})}\\
			&\le Ch^{5/2}\norm{e_x}{H^1(\Gamma_{h,*})}\norm{\phi_h}{L^2(\Gamma_{h,*})},
		\end{aligned}
	\end{equation*}
	and integrating from $0$ to $\theta$ gives $|J_{31}(\phi_h)|\le Ch^{5/2}\norm{e_x}{H^1(\Gamma_{h,*})}\norm{\phi_h}{L^2(\Gamma_{h,*})}$.

	For $J_{32}(\phi_h)$ and $J_{33}(\phi_h)$, we use the $L^2$ geometric errors
	\begin{equation}\label{eq:L2-geometric-errors}
		\norm{\Pi_{h,*}(\overline V_{h,*}\cdot n_{h,*})-(\overline V_\tau\cdot n)^{-\ell}}{L^2(\Gamma_{h,*})}+\norm{n_{h,*}-n_*}{L^2(\Gamma_{h,*})}\le Ch^k,
	\end{equation}
	which follow from the $L^2$ stability and the best approximation property of $\Pi_{h,*}$, \eqref{eq:interpolation} and Lemma~\ref{lem:normal-estimates}. Since $\norm{\nabla_{\Gamma_{h,*}}e_x}{L^\infty(\Gamma_{h,*})}\le Ch^{-1}\norm{e_x}{H^1(\Gamma_{h,*})}$ by \eqref{eq:inverse}, H\"older's inequality gives
	\begin{equation*}
		|J_{32}(\phi_h)|+|J_{33}(\phi_h)|\le Ch^{k-1}\norm{e_x}{H^1(\Gamma_{h,*})}\norm{\phi_h}{L^2(\Gamma_{h,*})}.
	\end{equation*}
	For $J_{34}(\phi_h)$, the perturbation estimate \eqref{eq:lift-mix} and the inverse estimate \eqref{eq:inverse} give
	\begin{equation*}
		|J_{34}(\phi_h)|\le Ch^k\norm{\phi_h}{L^\infty(\Gamma_{h,*})}\norm{e_x}{H^1(\Gamma_{h,*})} \le Ch^{k-1}\norm{e_x}{H^1(\Gamma_{h,*})}\norm{\phi_h}{L^2(\Gamma_{h,*})}.
	\end{equation*}
	Altogether, we have
	\begin{equation}\label{eq:J-leading}
		\begin{aligned}
			&\left|J(\phi_h)+\int_\Gamma(\overline V_\tau\cdot n)\,[(\nabla_\Gamma e_x^\ell)n]\cdot\phi_h^\ell\right| \\
			&\quad\le C\norm{e_x}{H^1(\Gamma_{h,*})}\left(\norm{\phi_h^\ell\cdot n}{L^2(\Gamma)}+(h^{5/2} + h^{k-1})\norm{\phi_h}{L^2(\Gamma_{h,*})}\right).
		\end{aligned}
	\end{equation}

	\emph{The stiffness term.} By definition, we know $B(e_x,\phi_h)=B_\Gamma(e_x^\ell,\phi_h^\ell)$. For $A_{h,*}^N(e_x,\phi_h)$, we have
	\begin{equation}\label{eq:stiffness-lift}
		\begin{aligned}
			A_{h,*}^N(e_x,\phi_h) &=A_\Gamma^N(e_x^\ell,\phi_h^\ell) +\int_{\Gamma_{h,*}}[(\nabla_{\Gamma_{h,*}}e_x)(n_{h,*}-n_*)]\cdot[(\nabla_{\Gamma_{h,*}}\phi_h)n_{h,*}] \\
			&\quad+\int_{\Gamma_{h,*}}[(\nabla_{\Gamma_{h,*}}e_x)n_*]\cdot[(\nabla_{\Gamma_{h,*}}\phi_h)(n_{h,*}-n_*)] \\
			&\quad+\Bigl(\int_{\Gamma_{h,*}}[(\nabla_{\Gamma_{h,*}}e_x)n_*]\cdot[(\nabla_{\Gamma_{h,*}}\phi_h)n_*] -\int_\Gamma[(\nabla_\Gamma e_x^\ell)n]\cdot[(\nabla_\Gamma\phi_h^\ell)n]\Bigr) \\
			&=:A_\Gamma^N(e_x^\ell,\phi_h^\ell) + A_1(\phi_h)+A_2(\phi_h)+A_3(\phi_h).
		\end{aligned}
	\end{equation}
	As for $J_{33}(\phi_h)$, \eqref{eq:L2-geometric-errors} and \eqref{eq:inverse} give
	\begin{equation*}
		|A_1(\phi_h)|+|A_2(\phi_h)|\le Ch^{k-1}\norm{e_x}{H^1(\Gamma_{h,*})}\norm{\phi_h}{H^1(\Gamma_{h,*})}.
	\end{equation*}
	For $A_3(\phi_h)$, the product rule gives $(\nabla_{\Gamma_{h,*}}\phi_h)n_*=\nabla_{\Gamma_{h,*}}(\phi_h\cdot n_*)-(\nabla_{\Gamma_{h,*}}n_*)\phi_h$, and likewise for $e_x$. Using \eqref{eq:lift-mass}, \eqref{eq:lift-mix}, \eqref{eq:lift-stiffness}, together with $\norm{\nabla_{\Gamma_{h,*}}n_*-(\nabla_\Gamma n)^{-\ell}}{L^\infty(\Gamma_{h,*})}\le Ch^k$ in \cite[Lemma~5.2]{Kovacs}, we deduce that
	\begin{equation*}
		|A_3(\phi_h)|\le Ch^{k-1}\norm{e_x}{H^1(\Gamma_{h,*})}\norm{\phi_h}{H^1(\Gamma_{h,*})}.
	\end{equation*}
	Altogether, we have
	\begin{equation}\label{eq:stiffness-difference}
		\left|A_{h,*}^N(e_x,\phi_h) - A_\Gamma^N(e_x^\ell,\phi_h^\ell)\right| \leq C h^{k-1} \norm{e_x}{H^1(\Gamma_{h,*})}\norm{\phi_h}{H^1(\Gamma_{h,*})}.
	\end{equation}

	It remains to rewrite $A_\Gamma^N+B_\Gamma$ on $\Gamma$. By \cite[(5.8)]{BaiLi}, $-A_\Gamma^T(e,\phi)+B_\Gamma(e,\phi) = \int_\Gamma\nabla_\Gamma\Id\cdot((\nabla_\Gamma\cdot e)I - \nabla_\Gamma e - (\nabla_\Gamma e)^T)\nabla_\Gamma\phi$. For smooth $e$ and $\phi$, let $\Gamma_\theta=(\Id+\theta e)(\Gamma)$ and $\partial_\theta^\bullet\phi=0$. Using \cite[Lemma~5.2]{BaiLi}, the identity $A_{\Gamma}(\Id,\phi)=\int_{\Gamma}Hn\cdot\phi$, the transport formula, \eqref{eq:material-identities} with $V$ replaced by $e$, and integration by parts,
	\begin{equation}\label{eq:continuous-identity-derivative}
		\begin{aligned}
			&A_\Gamma^N(e,\phi)+B_\Gamma(e,\phi) \\
			&= A_\Gamma(e,\phi)+\int_\Gamma\nabla_\Gamma\Id\cdot\left((\nabla_\Gamma\cdot e)I - \nabla_\Gamma e - (\nabla_\Gamma e)^T\right)\nabla_\Gamma\phi \\
			& =\frac{\mathrm d}{\mathrm d\theta}A_{\Gamma_\theta}(\Id,\phi)\Big|_{\theta=0} = \frac{\mathrm d}{\mathrm d\theta}\int_{\Gamma_\theta}Hn\cdot\phi\,\Big|_{\theta=0} \\
			&=\int_\Gamma\left(\partial_\theta^\bullet H\,(\phi\cdot n)+H\,\partial_\theta^\bullet n\cdot\phi+H(\phi\cdot n)\,\nabla_\Gamma\cdot e\right) \\
			&=\int_\Gamma\left(-\Delta_\Gamma(e\cdot n)-|A|^2(e\cdot n)+e\cdot\nabla_\Gamma H\right)(\phi\cdot n) -\int_\Gamma H\,[(\nabla_\Gamma e)n]\cdot\phi \\
			&\quad +\int_\Gamma H(\nabla_\Gamma\cdot e)(\phi\cdot n) \\
			&=\int_\Gamma\nabla_\Gamma(e\cdot n)\cdot\nabla_\Gamma(\phi\cdot n)-\int_\Gamma H\,[(\nabla_\Gamma e)n]\cdot\phi \\
			&\quad +\int_\Gamma\left(-|A|^2(e\cdot n)+e\cdot\nabla_\Gamma H+H\nabla_\Gamma\cdot e\right)(\phi\cdot n),
		\end{aligned}
	\end{equation}
	Since both sides of \eqref{eq:continuous-identity-derivative} are bounded bilinear forms on $H^1(\Gamma)^3\times H^1(\Gamma)^3$, the identity extends by density to all $e,\phi\in H^1(\Gamma)^3$.

	Finally, by \eqref{eq:averaged-velocity-Taylor} and $V_0\cdot n=-H$, we have $\norm{\overline V_\tau\cdot n+H}{L^\infty(\Gamma)}\le C\tau$. Combining this with \eqref{eq:stiffness-difference}, \eqref{eq:continuous-identity-derivative} for $e=e_x^\ell$ and $\phi=\phi_h^\ell$, and \eqref{eq:J-leading}, we obtain
	\begin{equation*}
		\begin{aligned}
			&\left|J(\phi_h)+A_{h,*}^N(e_x,\phi_h)+B(e_x,\phi_h)-\int_{\Gamma}\nabla_{\Gamma}(e_x^\ell\cdot n)\cdot\nabla_{\Gamma}(\phi_h^\ell\cdot n)\right| \\
			&\le\left|J(\phi_h)+\int_\Gamma(\overline V_\tau\cdot n)\,[(\nabla_\Gamma e_x^\ell)n]\cdot\phi_h^\ell\right| +\left|A_{h,*}^N(e_x,\phi_h)-A_\Gamma^N(e_x^\ell,\phi_h^\ell)\right| \\
			&\quad+\left|\int_\Gamma(-\overline V_\tau\cdot n-H)\,[(\nabla_\Gamma e_x^\ell)n]\cdot\phi_h^\ell\right| \\
			&\quad +\left|\int_\Gamma\left(-|A|^2(e_x^\ell\cdot n)+e_x^\ell\cdot\nabla_\Gamma H+H\nabla_\Gamma\cdot e_x^\ell\right)(\phi_h^\ell\cdot n)\right| \\
			&\le C\norm{e_x}{H^1(\Gamma_{h,*})}\left(\norm{\phi_h^\ell\cdot n}{L^2(\Gamma)} +(\tau+h^{5/2} + h^{k-1})\norm{\phi_h}{H^1(\Gamma_{h,*})}\right).
		\end{aligned}
	\end{equation*}
	This proves \eqref{eq:T-T}.
\end{proof}

\end{document}